\newcommand{\W}{\mathscr{W}}
\newcommand{\dtext}{\textnormal d}
\newcommand{\onto}{\xrightarrow[]{{}_{\!\!\textnormal{onto\,\,}\!\!}}}
\newcommand{\N}{\mathbb{N}}						% Nonnegative integers
\newcommand{\Z}{\mathbb{Z}}						% Integers
\newcommand{\Q}{\mathbb{Q}}						% Rationals
\newcommand{\R}{\mathbb{R}}						% Reals
\newcommand{\C}{\mathbb{C}}						% Complex numbers
\renewcommand{\S}{\mathbb{S}}					% Sphere
\newcommand{\B}{\mathbb{B}}
\newcommand{\eps}{\varepsilon}					% Epsilon shortcut
\newcommand{\derham}{\text{dR}}				% De Rham cohomology
\newcommand{\dd}								% Differential d
	{\mathop{}\!\mathrm{d}}						
\newcommand{\ddn}[1]							% Powers of a differential d
	{\mathop{}\!\mathrm{d^{#1}}}
\newcommand{\abs}[1]							% Absolute value
	{\left| #1 \right|}
\newcommand{\smallabs}[1]						% Small absolute value bars which won't scale to the argument.
	{\lvert #1 \rvert}	
\newcommand{\norm}[1]							% Norm 
	{\left\lVert #1 \right\rVert}	
\newcommand{\smallnorm}[1]						% Small norm bars which won't scale to the argument.
	{\lVert #1 \rVert}						
\newcommand{\ip}[2]								% Inner product
	{\left< #1 , #2 \right>}
\DeclareMathOperator{\id}{id}					% Identity
\DeclareMathOperator{\spt}{spt}					% Support
\DeclareMathOperator{\sgn}{sgn}
\DeclareMathOperator{\im}{im}					% Note: Image (pair is \ker).
\newcommand{\push}[1]{{#1}_*\,}					% Pushforward (has better spacing in some contexts).
\newcommand{\hodge}{\mathtt{\star}\hspace{1pt}}
\newcommand{\cesobloc}{W^{d}_{\text{CE}, \text{loc}}}
\newcommand{\cesobc}{W^{d}_{\text{CE}, c}}
\newcommand{\cehom}[1]{H_{\text{CE}}^{#1}}
\newcommand{\cehomc}[1]{H_{\text{CE}, c}^{#1}}
\newcommand{\loc}{\mathrm{loc}}
\DeclareMathOperator{\osc}{osc}
\newcommand{\cH}{\mathcal{H}}
\newcommand{\cS}{\mathcal{S}}
\newcommand{\cT}{\mathcal{T}}
\newcommand{\cR}{\mathcal{R}}
\newtheorem{thm}{Theorem}[section]{\bf}{\it}
\newtheorem{lemma}[thm]{Lemma}
\newtheorem{prop}[thm]{Proposition}
\newtheorem{cor}[thm]{Corollary}
\newtheorem{qu}[thm]{Question}
\newenvironment{customthm}[1]
{\innercustomthm}
{\endinnercustomthm}
\newenvironment{customprop}[1]
{\innercustomprop}
{\endinnercustomprop}
\newenvironment{customcor}[1]
{\innercustomcor}
{\endinnercustomcor}
\theoremstyle{definition}
\theoremstyle{remark}
\numberwithin{equation}{section}
\begin{document}

\title{Fibers of monotone maps of finite distortion}

\author[I. Kangasniemi]{Ilmari Kangasniemi}
\address{Department of Mathematics, Syracuse University, Syracuse,
NY 13244, USA }
\email{kikangas@syr.edu}

\author[J. Onninen]{Jani Onninen}
\address{Department of Mathematics, Syracuse University, Syracuse,
NY 13244, USA and  Department of Mathematics and Statistics, P.O.Box 35 (MaD) FI-40014 University of Jyv\"askyl\"a, Finland
}
\email{jkonnine@syr.edu}
%\thanks{J. Onninen was supported by the NSF grant  DMS-1700274.}

%    General info
\subjclass[2020]{Primary 30C65; Secondary 35J70}

\date{\today}

\keywords{Mappings of finite distortion, MFD, monotone, fiber, homology, conformal cohomology.}

\maketitle

\begin{abstract}
	We study topologically monotone surjective $W^{1,n}$-maps of finite distortion $f \colon \Omega \to \Omega'$, where $\Omega, \Omega' $ are domains in $\mathbb{R}^n$, $n \geq 2$. If the outer distortion function $K_f \in L_{\mathrm{loc}}^{p}(\Omega)$ with $p \geq n-1$, then any such map $f$ is known to be homeomorphic, and hence the fibers $f^{-1}\{y\}$ are singletons. We show that as the exponent of integrability $p$ of the distortion function $K_f$ increases in the range $1/(n-1) \leq p < n-1$, then the fibers $f^{-1}\{y\}$ of $f$ start satisfying increasingly strong homological limitations. We also give a Sobolev realization of a topological example by Bing of a monotone $f \colon \mathbb{R}^3 \to \mathbb{R}^3$ with homologically nontrivial fibers, and show that this example has $K_f \in L^{1/2 - \varepsilon}_{\mathrm{loc}}(\mathbb{R}^3)$ for all $\varepsilon > 0$.
\end{abstract}

\section{Introduction}

Let $\Omega$ and $\Omega'$ be domains in $\R^n$, $n \ge 2$. Recall that a mapping $f \colon \Omega \to \R^n$ of Sobolev class $W^{1,n}_{\loc}(\Omega, \mathbb R^n)$ has \emph{finite distortion} if
\begin{equation}\label{eq:distineq}
|Df(x)|^n \le  K(x) \,J_f(x)
\end{equation}
for some measurable function $1 \le K(x) < \infty$. Here, $\abs{Df(x)}$ stands for the operator norm of the differential $Df(x)$.
Thus, the distortion inequality~\eqref{eq:distineq} simply asks that  the Jacobian determinant $J_f(x)=\det Df(x)$ is positive at a.e.\ (almost every) point $x \in \Omega$ where $Df(x) \neq 0$. The smallest function $K(x) \ge 1$ for which the distortion inequality~\eqref{eq:distineq} holds is called the \emph{(outer) distortion function} of $f$, and is denoted by $K_f(x)$. When $K_f\in L^\infty (\Omega)$, we obtain the widely studied special case of \emph{quasiregular mappings}; see e.g.\ \cite{Iwaniec-Martin_book, Reshetnyak-book, Rickman_book}.  

In the past 20 years, there has been much systematic study of mappings of finite distortion in the field of geometric function theory (GFT).  Many of the standard results of quasiregular mappings have been proven for mappings of finite distortion with sufficient integrability assumptions on $K_f$; see e.g.\ \cite{Hencl-Koskela-book, Iwaniec-Martin_book}. The theory finds concrete applications in materials science, particularly nonlinear elasticity (NE) and critical phase phenomena, and in the calculus of variations.  

The mathematical models of NE~\cite{Antman_Elasticity-book, Ball_nonlinear-elasticity, Ciarlet_Elasticity-book}, and
a variational approach to GFT share common interests to study homeomorphisms of finite distortion and, in particular, (topologically) monotone mappings of finite distortion. Here, a mapping $f \colon X \to Y$ between topological spaces is \emph{(topologically) monotone}~\cite{Morrey_monotone} if $f$ is continuous and $f^{-1}\{y\}$ is connected for every $y \in Y$. Indeed, monotone mappings are well suited to model the \emph{weak interpenetration of matter} where, roughly speaking, squeezing of a portion of the material can occur, but not folding or tearing. In the planar setting, monotone mappings can be characterized as uniform limits of homeomorphisms by a theorem of Youngs \cite{Youngs_monotone-approx}.

To clarify our terminology, we note that in the study of mappings of finite distortion, it is also common to consider another form of monotonicity introduced by Manfredi~\cite{Manfredi_weakly_monotone}, which we call the \emph{$1$-oscillation property}. Namely, a mapping $f \colon \Omega \to \R^n$ satisfies the $1$-oscillation property if  it satisfies the estimate $\osc_B(f) \leq \osc_{\partial B}(f)$ for every ball $B \subset \Omega$, where  $\osc_K(f) = \sup_{x, x' \in K} \abs{f(x) - f(x')}$. This is a weaker definition of monotonicity, as any $W^{1,n}$-Sobolev mapping of finite distortion enjoys the $1$-oscillation property, see~\cite{Iwaniec-Koskela-Onninen_Inventiones}; this includes even maps like $z \mapsto z^2$ on the complex plane, which is clearly not topologically monotone. As another example of the difference between these definitions, folding maps which cause \emph{strong interpenetration of matter} are not topologically monotone, but may still satisfy the 1-oscillation property.

Our study is centered around the following general question: how does the integrability of $K_f$ affect the possible shapes of the fibers $f^{-1}\{y\}$, when $f \colon \Omega \onto \Omega'$ is a monotone mapping in $W_{\loc}^{1,n} (\Omega, \R^n)$.  We begin by recalling that, if a non-constant $f \in W^{1,n}_\loc(\Omega, \R^n)$ has finite distortion, $K_f \in L^{n-1}_\loc(\Omega)$, and $f$ has essentially bounded multiplicity, then $f$ is open and discrete by the main result in \cite{Hencl-Koskela_MFD-discr-open}.  Without assuming that the mapping $f$ has essentially bounded multiplicity, a slightly higher integrability for the distortion $K_f$ is required for openness and discreteness to still hold, namely $K_f \in L^{n-1+\eps}_\loc(\Omega)$ for some $\eps >0$, see~\cite{Hencl-Rajala, Manfredi-Villamor}.  However, for non-constant $W^{1,n}$-maps of finite distortion, this essential multiplicity bound always holds if $K_f \in L^{1/(n-1)}_\loc(\Omega)$, see Lemma \ref{lem:almost_every_fiber_is_trivial}. It follows that if a non-constant $f \in W^{1,n}_\loc(\Omega, \R^n)$ is monotone and $K_f \in L^{n-1}_\loc(\Omega)$, then $f$ is homeomorphic, and therefore all fibers $f^{-1}\{y\}$ with $y \in f(\Omega)$ are singletons.

The idea behind the main results of \cite{Hencl-Koskela_MFD-discr-open} is that if the required conditions are satisfied, then $\cH^1(f^{-1}\{y\}) = 0$ for every $y \in \R^n$, which is then used to show openness and discreteness of $f$. We note here that a trivial modification to the proof in \cite{Hencl-Koskela_MFD-discr-open} yields a similar result for other Hausdorff measures. Since this result serves as a starting point for our investigation, we state it here and give a few comments on the proof in Section \ref{sect:Hencl-koskela}.

\begin{thm}\label{thm:fiber_Hausdorff_bound}
	Let $\Omega \subset \R^n$ be a domain, $n \ge 2$. Suppose that $f \in W^{1,n}_\loc(\Omega, \R^n)$ is a non-constant mapping of finite distortion and the mapping $f$ has essentially bounded multiplicity. Then for $p \in \bigl[\frac{1}{n-1}, \infty\bigr)$ we have 
	\[
		K_f \in L^p_\loc(\Omega) \implies \cH^\frac{n}{p+1}(f^{-1}\{y\}) = 0 \text{ for all } y \in f(\Omega). 
	\]
\end{thm}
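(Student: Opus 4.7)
The plan is to redo the proof of the $\cH^1$-bound in \cite{Hencl-Koskela_MFD-discr-open}, keeping its geometric skeleton intact while re-calibrating the Hölder bookkeeping to match the hypothesis $K_f \in L^p_{\loc}$. Write $\alpha = n/(p+1)$ and $E = f^{-1}\{y\}$. By $\sigma$-compact exhaustion, it suffices to prove $\cH^\alpha(E \cap K) = 0$ for every compact $K \subset \Omega$. Essentially bounded multiplicity $N$ of $f$ combined with the area formula for $W^{1,n}$-maps yields $\int_A J_f\, dx \le N \abs{f(A)}$ for every measurable $A \subset \Omega$; this bound takes the place of a pointwise change of variables in what follows.

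The technical heart is a local annular estimate: for each $x_0 \in E$ and every $\eta > 0$ there exists $r \in (0, \eta)$ with $\overline{B(x_0, 2r)} \subset \Omega$ such that
\[
r^{\alpha} \,\le\, C(n) \int_{B(x_0, 2r) \setminus B(x_0, r)} \bigl(|Df(x)|^n + K_f(x)^p\bigr) \, dx.
\]
Its derivation follows \cite{Hencl-Koskela_MFD-discr-open} except in the choice of exponents. One slices $B(x_0, 2r) \setminus B(x_0, r)$ by the spheres $\partial B(x_0, s)$ for $s \in [r, 2r]$; for a.e.\ $s$ the trace $f|_{\partial B(x_0, s)}$ is continuous and Sobolev on the sphere, and the essential multiplicity bound together with $y = f(x_0) \in f(B(x_0, s))$ forces a quantitative lower estimate on the $(n-1)$-measure of $f(\partial B(x_0, s))$ by an isoperimetric comparison against the change-of-variables bound. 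Integrating over $s \in [r, 2r]$, invoking the distortion inequality $|Df|^n \le K_f J_f$, and applying Hölder with the conjugate exponents $p+1$ and $(p+1)/p$ pairs $K_f^p$ against $J_f$ and assembles the displayed inequality, with the left-hand exponent reducing precisely to $\alpha = n/(p+1)$. Plugging in $p = n-1$ reproduces the original $\cH^1$ estimate.

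Given the local inequality, a standard Vitali-type covering closes the argument. For $\eta > 0$, associate to each $x_0 \in E \cap K$ a radius $r_{x_0} \in (0, \eta/5)$ as above, and extract via the $5B$ covering lemma a disjoint subcollection $\{B(x_i, r_i)\}_i$ whose fivefold enlargements cover $E \cap K$. Disjointness of the $B(x_i, r_i)$ forces the annuli $A_i = B(x_i, 2r_i) \setminus B(x_i, r_i)$ to overlap with multiplicity bounded purely by $n$, so
\[
\cH^\alpha_{10\eta}(E \cap K) \,\le\, C(n) \sum_i r_i^\alpha \,\le\, C'(n) \int_{\bigcup_i A_i} \bigl(|Df|^n + K_f^p\bigr)\, dx.
\]
Since $\bigcup_i A_i$ lies in the $O(\eta)$-neighborhood of $E \cap K$, and $|E \cap K| = 0$ (handled separately, e.g.\ via the $(N^{-1})$-property of $W^{1,n}$-mappings of finite distortion under the standing integrability hypothesis on $K_f$), absolute continuity of the Lebesgue integral sends the right-hand side to zero as $\eta \to 0$, yielding $\cH^\alpha(E \cap K) = 0$.

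The principal difficulty is the exponent bookkeeping in the local inequality. The Hölder step must be arranged so that, after trading $J_f$ for $|f(B(x_0, s))|$ through the essentially bounded multiplicity, the volume term recombines with $K_f^p$ to produce exactly $r^{n/(p+1)}$; any other pairing gives a suboptimal exponent or violates the $L^1_\loc$ hypothesis on the right-hand side. Once this ``trivial modification'' of \cite{Hencl-Koskela_MFD-discr-open} is in place, the covering mechanism is routine.
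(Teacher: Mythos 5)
Your approach is structurally quite different from the paper's. The paper argues by contradiction: assuming $\cH^{n/(p+1)}(f^{-1}\{y\}) > 0$, it first uses the $(N^{-1})$-condition to get $\abs{f^{-1}\{y\}} = 0$, then invokes Hencl--Koskela's Theorem 3.2 as a black box applied to the \emph{scalar} function $u = \abs{f}$, which yields divergence of the weighted integral $\int_{f^{-1}\B^n(0,\delta)} \abs{Df}^{np/(p+1)} / (\abs{f}^{np/(p+1)} \log(e + \abs{f}^{-1}))$. A single application of Young's inequality, splitting $K_f^p$ against $\abs{Df}^n / (K_f \abs{f}^n \log^{(p+1)/p}(e + \abs{f}^{-1}))$, produces the contradiction, with the second term controlled by change of variables and bounded multiplicity. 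The ``trivial modification'' the authors allude to is just replacing $n-1$ by $p$ and $1$ by $n/(p+1)$ throughout this paragraph.

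You instead try to re-derive the geometric core directly, which brings the genuine difficulty back into view. The covering and absolute-continuity endgame is fine, but the local annular estimate $r^\alpha \le C \int_{B(x_0,2r)\setminus B(x_0,r)} (\abs{Df}^n + K_f^p)$ is not established by the argument you sketch, and this is exactly the step that carries all the content. The ``isoperimetric comparison against the change-of-variables bound'' runs in the wrong direction: the area formula with bounded multiplicity gives an \emph{upper} bound $\int_{B(x_0,s)} J_f \le N\abs{f(B(x_0,s))}$, and any isoperimetric comparison of $f(B(x_0,s))$ to $f(\partial B(x_0,s))$ (itself problematic, since $f$ is not assumed open and $f(\partial B(x_0,s))$ need not bound $f(B(x_0,s))$ in any useful sense) yields a lower bound on $\cH^{n-1}(f(\partial B(x_0,s)))$ only if $\abs{f(B(x_0,s))}$, equivalently $\int J_f$, is already bounded below --- which you have not shown and which can fail pointwise. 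What actually forces the relevant integral to be large in Hencl--Koskela's argument is the \emph{global} assumption $\cH^\alpha(f^{-1}\{y\}) > 0$, fed through a Frostman measure and a capacity estimate on the scalar level set $\{\abs{f} = 0\}$; this mechanism simply does not appear in a per-point annular estimate at a single $x_0$. Without it, the lower bound you need is unsupported, and the argument does not close.
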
 

\subsection{An example with looped fibers}

In \cite[Section 4]{Bing_monotone-example}, Bing gives a topological example of a monotone map $f \colon \R^3 \onto \R^3$ such that some of the fibers $f^{-1}\{y\}$ of $f$ are topologically $\S^1$ or $\S^1 \vee \S^1$. Here $\S^1 \vee \S^1$ stands for a figure-eight formed by two disjoint copies of circle $\S^1$ that have been joined at a point.
The example is part of a detailed investigation into the higher-dimensional failure of a theorem of Moore \cite{Moore-Trans25}, which states that each decomposition of $\R^2$ into continua which fail to separate $\R^2$ yields a decomposition space topologically equivalent to $\R^2$.  We refer to a book of Daverman \cite{Daverman_book} for the  development of monotone mappings as a part of the theory of decomposition spaces and manifold recognition problems.

In this paper, we construct an explicit Sobolev representation of Bing's mapping, and study its properties as a mapping of finite distortion. Here, we recall that a map $f \colon \Omega \to \Omega'$ is \emph{proper} if $f^{-1} K$ is compact for every compact $K \subset \Omega'$.

\begin{thm}\label{thm:Bing_meets_Sobolev_again}
	There exists a map $h \colon \R^3 \to \R^3$ with the following properties.
	\begin{itemize}
		\item The map $h$ is topologically monotone, proper, and surjective onto $\R^3$.
		\item The map $h$ is locally Lipschitz, and $J_h$ is positive almost everywhere. Hence, $h$ is a mapping of finite distortion.
		\item We have $K_h \in L^p_\loc(\R^3, \R^3)$ for every $p < 1/2$, but $K_h \notin L^{1/2}_\loc(\R^3, \R^3)$.
		\item The fibers $h^{-1}\{0\}$ and $h^{-1}\{-e_x\}$ are bilipschitz equivalent with $\S^1$. The fibers $h^{-1}\{-te_x\}$ for $t \in (0, 1)$ are bilipschitz equivalent with $\S^1 \vee \S^1$. The fibers $h^{-1}\{-te_x\}$ for $t \in (1, \infty)$ are bilipschitz equivalent with $[0, 1]$. For all other values $y \in \R^3 \setminus \{-te_x \colon t \geq 0\}$, the fiber $h^{-1}\{y\}$ is a point.
	\end{itemize}
\end{thm}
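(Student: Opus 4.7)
The plan is to realize Bing's decomposition of $\R^3$ explicitly as the fibers of a locally Lipschitz map, building $h$ in three stages. First, I would fix a specific 2-complex $\Sigma \subset \R^3$ whose connected pieces form the prescribed nondegenerate fibers: a smooth family $\{F_t\}_{t \geq 0}$ with $F_t$ a figure-eight for $t \in (0,1)$, degenerating to a simple circle at $t = 0$ and $t = 1$ (one loop pinches to a point), and $F_t$ an arc for $t > 1$ (obtained by opening the remaining loop of $F_1$ as $t$ crosses $1$). Setting $\Sigma = \bigcup_t F_t$, Bing's argument in \cite{Bing_monotone-example} shows that the resulting decomposition of $\R^3$ (with singleton fibers outside $\Sigma$) is upper semicontinuous and shrinkable, so the quotient space is homeomorphic to $\R^3$ and the associated quotient map is topologically monotone.

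Next, I would construct $h$ explicitly on a tubular neighborhood $U$ of $\Sigma$ using coordinates $(\lambda, s, w)$ adapted to the fibration, with $\lambda$ parametrizing the fiber family, $s$ an arc-length-type coordinate along $F_\lambda$, and $w$ a signed normal coordinate transverse to $\Sigma$. I would cover $\Sigma$ by finitely many patches, using at least two patches near each figure-eight to resolve its self-intersection, and on each patch define $h$ by an explicit formula that sends $F_\lambda = \{w = 0\}$ to $-\lambda e_x$ and is injective for $w \neq 0$. The patches are glued via a smooth partition of unity, and $h$ is then extended to $\R^3 \setminus U$ by a bilipschitz homeomorphism onto $\R^3 \setminus \{-te_x : t \geq 0\}$ matching the boundary data on $\partial U$.

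Verification proceeds by direct inspection of the explicit formula. Topological monotonicity, properness, and surjectivity are built into the construction via the fiber structure; local Lipschitz regularity and the a.e.\ positivity of $J_h$ follow by differentiation on each patch. The distortion estimate is the technical heart of the proof: in the $(\lambda, s, w)$ frame, the collapse forces $J_h(x) \asymp \dist(x, \Sigma)^2$ and $\abs{Dh(x)} \asymp 1$ near $\Sigma$, so that $K_h(x) \asymp \dist(x, \Sigma)^{-2}$. Since $\Sigma$ is a $2$-surface of codimension $1$ in $\R^3$, integration in the tubular coordinates gives $\int_U K_h^p \asymp \int_\Sigma \int_{-\eps}^{\eps} |w|^{-2p}\,dw\,dS_\Sigma$, finite exactly for $p < 1/2$ and divergent at $p = 1/2$.

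The main obstacle is designing the local collapse formulas so that they (i) glue consistently across the figure-eight self-intersection curves and across the degenerations at $t = 0, 1$ and at $t = 1$, and (ii) yield the clean scaling $J_h \asymp \dist(\cdot, \Sigma)^2$ uniformly on $\Sigma$. Naive formulas tend to produce a worse distortion blow-up near the singular parts of $\Sigma$, destroying the sharp integrability threshold or even $L^p$-integrability of $K_h$ entirely. Achieving the correct balance between the collapse rate along the fiber direction, the normal scaling, and the family-degeneration parameters, while preserving local Lipschitz regularity and the prescribed fiber topology, is the crux of the construction.
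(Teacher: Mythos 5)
Your overall strategy mirrors the paper's (an explicit Lipschitz realization of Bing's decomposition with a direct distortion computation), but the proposal leaves out precisely the content that makes the proof work, and the one quantitative claim you do make is not correct as stated. First, the assertion that ``the collapse forces $J_h(x) \asymp \dist(x, \Sigma)^2$'' is not forced by the topology: a codimension-one collapse generically produces only first-order vanishing $J_h \asymp \dist(x, \Sigma)$, hence $K_h \asymp \dist^{-1}$, which is $L^p_\loc$ for all $p<1$. The second-order vanishing is a property of the specific formula one chooses, not of ``the collapse,'' and cannot be asserted without exhibiting the formula and differentiating it. In fact, in the paper's construction the singular set $\Sigma$ is the union of the disk $\{z=0,\, r\le 1\}$ and the half-plane $\{\theta=0\}$ (cylindrical coordinates), and the vanishing order of $J_h$ is \emph{different} on the two pieces: $J_h \asymp \abs{z}$ near the disk (so $K_h \asymp \dist^{-1}$ there) and $J_h \asymp \abs{\theta}^2$ near the half-plane (so $K_h \asymp \dist^{-2}$ there), with a mixed product $\abs{z}\abs{\theta}^2$ near their intersection. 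Your uniform $\dist^{-2}$ model happens to give the right threshold only because the worst piece dominates, but the single-exponent tubular-coordinate integral $\int_\Sigma\int |w|^{-2p}\,dw\,dS_\Sigma$ does not reflect the actual geometry of $\Sigma$, which is neither smooth nor a manifold.

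Second, and more fundamentally, the construction step is missing. The paper does not use adapted Fermi coordinates and a partition of unity; it fixes cylindrical coordinates $(r,\theta,z)$, partitions $\R^3$ into the family of square tori $T_c = \{\abs{r-1}+\abs{z}=c\}$, and maps each cross-section $T_{c,\theta}$ through three elementary affine steps (placement, scaling by $\abs{\theta}/\pi$ centered at a tip, and a shear). This is engineered precisely so that the resulting formulas \eqref{eq:h_def_1}--\eqref{eq:h_def_2} are piecewise polynomial in $(r,\theta,z)$, so that $Dh$ and $J_h$ can be computed in closed form and the $L^p$-integrability threshold can be read off from monomials like $\abs{z}\abs{\theta}^2/r$. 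Gluing patches by a smooth partition of unity, as you propose, would destroy that exact structure and introduce cross terms whose Jacobian behavior would have to be estimated afresh near the self-intersection curves of the figure-eights and near the degenerations at $t=0,1$ --- the very places you flag as ``the crux'' without resolving. As it stands, the proposal identifies the correct target ($p<1/2$) and names the obstacles, but does not supply the explicit map, the Lipschitz estimate, the Jacobian computation, or a valid justification for the decisive distortion scaling, so it cannot be considered a proof of the theorem.
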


A notable property of this example is that adjustments to the definition of $h$ seem to fail to improve the integrability of $K_h$ past the threshold of $p = 1/2$. For comparison, the threshold imposed by Theorem \ref{thm:fiber_Hausdorff_bound} at which $1$-dimensional fibers are prevented is $p = 2$. Standard results instead imply that a monotone mapping of finite distortion $f \in W^{1,3}_\loc(\R^3, \R^3)$ with $K_f \in L^{1/2}_\loc(\R^3)$ satisfies the Lusin $(N^{-1})$-condition, and that $J_f > 0$ a.e., see \cite[Theorem 4.13]{Hencl-Koskela-book}. Here, we recall that a map $f \colon \R^3 \to \R^3$ satisfies the \emph{Lusin $(N^{-1})$-condition} if $f^{-1} A$ has zero (Lebesgue) measure for every $A \subset \R^3$ of zero measure; conversely the \emph{Lusin $(N)$-condition} is that $f(A)$ has measure zero for every $A \subset \R^3$ of measure zero.  However, the aforementioned result presents no obstruction in our case, since the map $h$ of Theorem \ref{thm:Bing_meets_Sobolev_again} does have an a.e.\ positive $J_h$, and $h$ therefore also satisfies the Lusin $(N^{-1})$-condition.

Hence, the existing results in the theory of mappings of finite distortion cannot seem to explain the apparent upper limit on the integrability of $K_h$. This suggests a potential missing result on the fact that the integrability of $K_h$ limits the possible looping of fibers. Our main goal in this paper is to prove such a result.

\subsection{Homological obstructions}

The most natural form of our main results is stated in terms of pre-images of open balls. In this setting, the statement is as follows.

\begin{thm}\label{thm:homology_obstruction}
	Let  $f \colon \Omega \to \Omega'$ be a proper, continuous, monotone surjection  in the Sobolev class $W^{1,n}_\loc(\Omega, \R^n)$, $n \ge 3$. Suppose that $k \in \{1, \dots, n-2\}$, and that
	\[
		K_f \in L^p_\loc(\Omega), \quad \text{where }
		p = \begin{cases}
				\frac{n-(k+1)}{k+1},& 1 \leq k < \frac{n}{2},\\
				1,& k = \frac{n}{2},\\
				\frac{k-1}{n-(k-1)},& \frac{n}{2} < k \leq n-2.
			\end{cases}
	\]
	Then
	\[
		H_k(f^{-1} \B^n(y, r); \R) = \{0\} \quad \text{for every } \B^n(y, r) \Subset \Omega'.
	\]
\end{thm}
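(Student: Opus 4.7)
Set $V := \B^n(y,r)$ and $U := f^{-1}(V)$. Continuity of $f$ makes $U$ open; $V \Subset \Omega'$ together with properness of $f$ makes $\overline{U}$ compact in $\Omega$; and monotonicity of $f$ applied to the connected set $V$ forces $U$ to be connected. Since $U$ is an open subset of $\R^n$, hence an oriented topological $n$-manifold, Poincaré--Lefschetz duality yields the isomorphism $H_k(U;\R) \cong H^{n-k}_c(U;\R)$, so it suffices to prove that the compactly-supported de Rham cohomology of $U$ in degree $n-k$ vanishes.

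The analytic engine should be a bounded pullback of forms $f^*$ between appropriate weighted or conformal Sobolev cohomology spaces on $V$ and $U$. For a smooth $(n-k)$-form $\omega$ on $V$ one has the pointwise bound $|f^*\omega| \le C |Df|^{n-k} |\omega \circ f|$; combining this with the distortion inequality $|Df|^n \le K_f J_f$, the change-of-variables formula, and Hölder's inequality, the stated critical exponent $p = (n-k-1)/(k+1)$ (for $k<n/2$) should emerge precisely as the threshold at which $f^*\omega$ remains in the target cohomology. The symmetric formula for $k>n/2$ follows from Poincaré duality inside $V$, swapping $k \leftrightarrow n-k$; the middle case $k=n/2$, where both formulas collapse to $p=1$, interpolates between the two regimes.

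Given such a bounded pullback, the proof concludes via a transfer argument. Since $f$ is a proper monotone surjection, it has topological degree $\pm 1$, and one should have a pushforward $f_*$ on forms (in the sense of currents, or via approximation by regular maps) satisfying the transfer identity $f^* f_* \alpha \sim \alpha$ modulo exact forms. Since $V$ is contractible and $n-k<n$, $H^{n-k}_c(V;\R) = 0$; thus any compactly supported closed $(n-k)$-form $\alpha$ on $U$ has $f_*\alpha = d\gamma$ on $V$, and pulling back yields $\alpha \sim d(f^*\gamma)$, which is exact on $U$. Theorem~\ref{thm:fiber_Hausdorff_bound} supplies a supporting fact: at the stated $p$, the fibers of $f$ have Hausdorff dimension at most $k$, which bounds their topological complexity and helps justify the cohomological vanishing via Leray- or Vietoris--Begle-style arguments.

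The main technical obstacle is constructing the correct weighted Sobolev cohomology on which both $f^*$ and $f_*$ act boundedly precisely at the stated critical exponent, while still detecting ordinary singular cohomology on the possibly non-smooth open set $U$. A secondary difficulty is establishing the transfer identity $f^* f_* \sim \mathrm{id}$ for a monotone map of finite distortion, where the branch set requires delicate analysis and one cannot appeal directly to the smooth theory. The boundary case $k=n/2$ at $p=1$ is likely the most delicate, since $L^1$ integrability of $K_f$ is typically not strong enough to run the estimates on its own and some additional structural input may be required.
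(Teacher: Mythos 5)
Your high-level plan matches the paper's own ``idea of the proofs'' section: pass to compactly-supported cohomology via Poincar\'e duality on $U = f^{-1}\B^n(y,r)$, push a closed form forward to the ball, use triviality of the ball's cohomology in a conformally-scaled Sobolev cohomology to produce a primitive, and pull it back. But several of the mechanisms you sketch are not the ones that make the argument work, and a few are red herrings.

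First, the push-forward. You propose building $f_*$ ``in the sense of currents, or via approximation by regular maps'' using topological degree $\pm 1$, and expect only $f^* f_* \alpha \sim \alpha$ modulo exact forms. The paper's construction is both simpler and sharper: under the assumption $K_f^{1/(n-1)} \in L^1_\loc$, the fibers of a monotone map are singletons for a.e.\ $y$ (Lemma~\ref{lem:almost_every_fiber_is_trivial}), so one can define $(\push{f}\omega)_y = \omega_{f^{-1}(y)} \circ \wedge^k [Df(f^{-1}(y))]^{-1}$ directly at a.e.\ point, with no currents or degree theory. The transfer identity then holds exactly, $f^* \push{f}\omega = \omega$ a.e., as a pointwise statement via change of variables; what requires care is proving the weak commutation $d\push{f}\omega = \push{f} d\omega$ (Lemma~\ref{lem:pushforward_d_commutation}) and that $\omega = df^*\tau$ holds weakly for the low-regularity primitive $\tau$ (Lemma~\ref{lem:pullback_commutation_lemma}).

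Second, the $k>n/2$ regime. You say the symmetric formula ``follows from Poincar\'e duality inside $V$, swapping $k \leftrightarrow n-k$,'' but Poincar\'e duality on the ball $V$ is vacuous (all relevant groups vanish) and does not swap anything. The paper's actual mechanism is a different choice of cohomology theory on $U$: for $k < n/2$ one uses $H_k(U;\R) \cong H^{n-k}_{\derham,c}(U)$ and runs the push-forward/pull-back in \emph{compactly supported} conformal cohomology of degree $n-k$; for $k > n/2$ one instead uses the universal coefficient theorem $H^k(U;\R) \cong H_k(U;\R)$ and runs the same argument in ordinary $L^p_\loc$-type conformal cohomology of degree $k$. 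The two options trade off the push-forward integrability constraint against the pull-back one in opposite ways, which is precisely why the exponent table is symmetric. Your heuristic happens to produce the right exponent but for the wrong reason, and a proof built on ``Poincar\'e duality inside $V$'' would not close.

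Third, the red herrings. Theorem~\ref{thm:fiber_Hausdorff_bound} and Leray/Vietoris--Begle arguments play no role in the proof; the Hausdorff bound is motivation only. And the case $k = n/2$, $p = 1$, is not delicate: it satisfies both integrability hypotheses of the compactly-supported lemma ($p = 1 = k/(n-k) > (n-k-1)/(k+1)$), so the same argument runs unchanged. The delicacy you anticipate does not materialize because the relevant estimates (Lemmas~\ref{lem:pullback_estimate} and \ref{lem:pushforward_estimate}) are genuinely sharp at these exponents rather than failing there.

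What you correctly anticipate, and what is indeed the technical heart, is that the intermediate cohomology on $V$ must be the conformal one, with $L^{n/(n-k)}$ forms and $L^{n/(n-k+1)}$ differentials: a higher exponent breaks the push-forward, a lower one breaks the pull-back. The paper formalizes this as $\cesobloc$ and $\cesobc$ and proves the needed de Rham isomorphism (Theorem~\ref{thm:conf_cohomology_equiv}) via sheaf theory together with an $L^1$-sharp Sobolev--Poincar\'e lemma for forms.
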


Here $H_k(X; \R)$ stands for the $k$:th \emph{singular homology} group of $X$ with coefficients in $\R$, and $U \Subset V$ denotes that the closure $\overline{U}$ is a compact subset of $V$. We note that Theorem \ref{thm:homology_obstruction} does not include the cases $k = 0, n-1, n$. In the case $k = n-1$ our argument in fact does give a critical exponent $p = (n-2)/2$. However, including these cases is unnecessary, as $H_k(f^{-1} \B^n(y, r); \R)$ for $k \in \{0, n-1, n\}$ is determined entirely by the topological properties of $f$ by the following standard result.

\begin{prop}\label{prop:topo_preimage_restrictions}
	Let $f \colon \Omega \to \Omega'$ be a proper, continuous, monotone surjection between open domains in $\R^n$. Then for every $\B^n(y, r) \Subset \Omega'$, we have 
	\begin{align*}
		H_0(f^{-1} \B^n(y, r); \R) &\cong \R,\\
		H_{n-1}(f^{-1} \B^n(y, r); \R) &\cong \{0\},\\
		H_{n}(f^{-1} \B^n(y, r); \R) &\cong \{0\}.
	\end{align*}
\end{prop}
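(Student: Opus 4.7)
Let me set $U := f^{-1}(\B^n(y,r))$ for convenience. Since $\overline{\B^n(y,r)} \Subset \Omega'$, properness gives that $\overline U = f^{-1}(\overline{\B^n(y,r)})$ is compact in $\Omega$, so $U$ is a non-empty, bounded, open subset of $\R^n$. The plan is to handle each of the three homology claims separately, with only the $H_{n-1}$ case requiring real work.

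The top-dimensional case is immediate: $U$ is a non-empty non-compact $n$-manifold without boundary, and so $H_n(U;\R) = 0$. For $H_0(U;\R) \cong \R$ I would show $U$ is connected via the standard closed-map trick. Given $p_0, p_1 \in U$, connect $f(p_0)$ and $f(p_1)$ by a path $\gamma$ in $\B^n(y,r)$ and let $K = f^{-1}(\gamma([0,1]))$; properness makes $K$ compact, and $K$ is partitioned by the connected fibers $f^{-1}\{\gamma(t)\}$. If $K = A \sqcup B$ were a clopen decomposition, the sets $T_A, T_B \subseteq [0,1]$ of parameters $t$ with $f^{-1}\{\gamma(t)\}$ contained in $A$ respectively $B$ would partition $[0,1]$, and both would be open, using that a proper map is closed and that any open neighborhood of a fiber contains the preimage of an open neighborhood of the corresponding point. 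Connectedness of $[0,1]$ thus forces $K$ to be connected, joining $p_0$ to $p_1$.

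For $H_{n-1}(U;\R) \cong 0$ I would apply Alexander duality in $\S^n$: since $U$ is bounded, $\infty$ lies in the unbounded component of $\R^n \setminus U$, so the claim reduces to $\R^n \setminus U$ being connected. My strategy is to write
\[
    \R^n \setminus U \;=\; \overline{\Omega \setminus U} \;\cup\; \bigcup_K \overline K,
\]
where $K$ ranges over the components of the open set $\R^n \setminus \overline \Omega$, and to check that each piece is connected and attaches to $\overline{\Omega \setminus U}$. First, since $n \geq 2$ and $\overline{\B^n(y,r)} \subset \Omega'$, the set $\Omega' \setminus \B^n(y,r)$ is path-connected: any path in $\Omega'$ through the ball can be rerouted along the sphere $\partial \B^n(y,r) \subset \Omega'$. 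Re-running the clopen argument of the $H_0$ case with this new target shows that $\Omega \setminus U = f^{-1}(\Omega' \setminus \B^n(y,r))$ is connected, and hence so is its closure $\overline{\Omega \setminus U}$ in $\R^n$. Second, properness forces $\overline{\Omega \setminus U} \supseteq \partial \Omega$: a sequence $x_k \to x^* \in \partial \Omega$ in $\Omega$ must have $f(x_k)$ eventually leaving the compact $\overline{\B^n(y,r)}$, so $x_k \in \Omega \setminus U$ eventually. Third, each component $K$ of $\R^n \setminus \overline{\Omega}$ is open in $\R^n$ with topological boundary $\partial K \subseteq \partial \Omega$ (limits of $K$ cannot enter the open set $\Omega$), so $\overline K$ is a connected set meeting $\overline{\Omega \setminus U}$ through $\partial \Omega$. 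Hence $\R^n \setminus U$ is a union of connected sets all sharing points with the connected set $\overline{\Omega \setminus U}$, and is therefore connected.

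The step I expect to demand the most care is the assembly at the end of the $H_{n-1}$ argument: one has to track which closures are taken in $\R^n$ versus in $\Omega$, verify that the displayed union really equals $\R^n \setminus U$, and confirm that the various pieces genuinely attach through $\partial \Omega$ rather than remaining isolated from $\overline{\Omega \setminus U}$. Everything else reduces to routine applications of monotonicity and the closed-map property of proper maps.
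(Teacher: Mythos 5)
Your proof is correct, and it departs from the paper's argument in an interesting way in the $H_{n-1}$ case. For $H_0$ and $H_n$ you and the paper do essentially the same thing (you reprove from scratch the fact that a proper continuous monotone surjection pulls back connected sets to connected sets, which the paper simply cites from Engelking, Corollary 6.1.19). For $H_{n-1}$, both arguments pass through Alexander duality, reducing the claim to a connectedness statement about the complement of $U := f^{-1}\B^n(y,r)$ in $\S^n$. But the paper establishes this indirectly: it assumes $\tilde{H}^0(\S^n \setminus U;\R) \neq 0$, uses a Mayer--Vietoris sequence with $f^{-1}\overline{\B^n(y,r')}$ to push the nontrivial $\tilde{H}^0$ class onto the compact ``spherical shell'' preimage $f^{-1}(\overline{\B^n(y,r')}\setminus \B^n(y,r))$, and then contradicts the connectedness of that preimage. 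You instead prove connectedness of $\R^n \setminus U$ directly by decomposing it as $\overline{\Omega\setminus U}$ together with the closures of the components of $\R^n\setminus\overline\Omega$, checking that $\Omega\setminus U = f^{-1}(\Omega'\setminus\B^n(y,r))$ is connected (again via the monotone preimage argument, now over a target that is path-connected because one can reroute along $\partial\B^n(y,r)$ when $n\geq 2$), that $\partial\Omega\subset\overline{\Omega\setminus U}$ by properness, and that each exterior component's closure attaches through $\partial\Omega$. Your route is more hands-on and avoids Mayer--Vietoris, at the cost of more bookkeeping about closures in $\R^n$ versus $\Omega$; the paper's route is shorter once one is comfortable invoking the Mayer--Vietoris sequence for \u{C}ech cohomology and the quasicomponent characterization of $\check{H}^0$. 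Both are sound; just make sure to record, as you anticipated, that the disjoint clopen pieces $A,B$ of the compact set $K = f^{-1}(\gamma([0,1]))$ can be separated by disjoint \emph{open} sets in $\R^n$ before applying the closed-map property, and that each exterior component $K$ of $\R^n\setminus\overline\Omega$ has nonempty boundary (else it would be clopen in $\R^n$), so that it really does attach to $\overline{\Omega\setminus U}$.
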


We then consider a fiber $f^{-1}\{y\}$ of a monotone map $f$ satisfying the assumptions of Theorem \ref{thm:homology_obstruction}. The sets $f^{-1} B^n(y, i^{-1})$ for large enough $i \in \Z_+$ form a descending sequence of precompact neighborhoods of $f^{-1}\{y\}$, and the intersection of these neighborhoods is $f^{-1}\{y\}$. We are now interested in whether the triviality of the sets $H_k(f^{-1} B^n(y, i^{-1}); \R)$ implies the triviality of $H_k(f^{-1}\{y\}; \R)$.

One example of a situation in which this does occur is if $f^{-1} \{y\}$ is a \emph{neighborhood retract}; that is, if there exists a neighborhood $U \subset \R^n$ of $f^{-1} \{y\}$ and a retraction $r \colon U \to f^{-1} \{y\}$. One class of examples of neighborhood retracts are closed manifolds with a tubular neighborhood, which for example include all embedded smooth closed submanifolds of $\R^n$.

\begin{cor}\label{cor:fiber_homology_obstruction}
	Let  $f \colon \Omega \to \Omega'$ be a proper, continuous, monotone surjection  in the Sobolev class $W^{1,n}_\loc(\Omega, \R^n)$, $n \ge 3$.  Let $k \in \{1, \dots, n\}$. Moreover, if $k \leq n-2$, suppose also that
	\[
		K_f \in L^p_\loc(\Omega), \quad \text{where }
		p = \begin{cases}
			\frac{n-(k+1)}{k+1},& 1 \leq k < \frac{n}{2},\\
			1,& k = \frac{n}{2},\\
			\frac{k-1}{n-(k-1)},& \frac{n}{2} < k \leq n-2.
		\end{cases}
	\]
	If $y \in \Omega'$ is such that $f^{-1}\{y\}$ is a neighborhood retract, then $H_k(f^{-1} \{y\}; \R) = \{0\}$.
\end{cor}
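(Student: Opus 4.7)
The plan is to exploit the neighborhood retract structure of $F := f^{-1}\{y\}$ to pull the vanishing of $H_k$ back from a small neighborhood of $F$ to $F$ itself. For $r > 0$ with $\B^n(y, r) \Subset \Omega'$, write $U_r := f^{-1}\B^n(y, r)$; by properness of $f$, each $\overline{U_r}$ is compact, and the family $\{U_r\}$ forms a descending neighborhood basis of the compact set $F$. Let $V \subset \R^n$ be an open neighborhood of $F$ admitting a retraction $\rho \colon V \to F$.

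The first step is to show that $U_r \subset V$ for all sufficiently small $r > 0$. Suppose not: then there is a sequence $r_j \to 0$ and points $x_j \in U_{r_j} \setminus V$. Fix any $r_0 > 0$ with $\B^n(y, r_0) \Subset \Omega'$; for $j$ large, $x_j \in \overline{U_{r_0}}$, which is compact, so a subsequence $x_{j_m}$ converges to some $x \in \overline{U_{r_0}}$. Continuity of $f$ together with $f(x_{j_m}) \in \B^n(y, r_{j_m})$ forces $f(x) = y$, hence $x \in F \subset V$; but $V$ is open while $x_{j_m} \notin V$ for all $m$, a contradiction. Fix $r$ small enough that $\B^n(y, r) \Subset \Omega'$ and $U_r \subset V$.

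The inclusion $\iota \colon F \hookrightarrow U_r$ and the restriction $\rho|_{U_r} \colon U_r \to F$ then satisfy $\rho|_{U_r} \circ \iota = \id_F$. Applying singular homology with $\R$ coefficients gives
\[
	(\rho|_{U_r})_* \circ \iota_* = \id_{H_k(F; \R)},
\]
so $\iota_* \colon H_k(F; \R) \to H_k(U_r; \R)$ is injective. For $1 \leq k \leq n-2$, Theorem \ref{thm:homology_obstruction} yields $H_k(U_r; \R) = \{0\}$; for $k \in \{n-1, n\}$, Proposition \ref{prop:topo_preimage_restrictions} yields the same. Injectivity of $\iota_*$ into the trivial group then forces $H_k(F; \R) = \{0\}$. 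There is no serious obstacle here: the argument is the standard neighborhood-retract trick, and its only technical ingredient is the precompactness step, which is immediate from properness of $f$.
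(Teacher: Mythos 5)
Your argument is correct and is essentially the same as the paper's: pick a retraction neighborhood $V$, use properness to get a descending precompact neighborhood basis $U_r = f^{-1}\B^n(y,r)$ of the fiber, find $U_r \subset V$, and then use the retraction to factor $\id_{H_k(F;\R)}$ through $H_k(U_r;\R) = \{0\}$. Two small differences worth noting: you spell out the compactness argument for why $U_r \subset V$ eventually (which the paper merely asserts), and you explicitly invoke Proposition~\ref{prop:topo_preimage_restrictions} to handle the cases $k \in \{n-1, n\}$, which the paper's proof leaves implicit even though its statement allows those $k$.
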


For an arbitrary compact connected set $K \subset \R^n$, it is possible that $H_k(K; \R) \neq \{0\}$ even if $H_k(U_i; \R) = \{0\}$ for a decreasing sequence of pre-compact neighborhoods $U_i$ of $K$ with $K = \bigcap_i U_i$. For an example of this, consider
\[
	K = \overline{\left\{(x, y, z) \in \R^3 : 0 < x^2 + y^2 \leq 1, z = \sin\left( \pi/\sqrt{x^2 + y^2}\right)\right\}}.
\]
That is, $K$ is the closed topologist's sine curve that has been revolved around the $z$-axis. The set $K$ is compact and connected, though it is not path connected. Moreover, the loop $S = \{(x, y, z) \in \R^3 : z = 0, x^2 + y^2 = 1\} \subset K$ induces a non-zero homology class in $H_1(K; \R)$, but $S$ is homologically trivial in any neighborhood of $K$. It is, however, unknown to us whether any examples similar to $K$ can occur as a fiber of a monotone $W^{1,n}$-map of finite distortion $f$, and if yes, what restrictions this would place on the degree of integrability of $K_f$.

The map $h$ of Theorem \ref{thm:Bing_meets_Sobolev_again} shows that Theorem \ref{thm:homology_obstruction} and Corollary \ref{cor:fiber_homology_obstruction} are sharp when $n = 3$, $k = 1$. In this way, our results explain the difficulties in trying to improve the integrability of $K_f$ beyond $p = 1/2$. It is unknown to us whether these bounds are sharp for other values of $n, k$. See Figure \ref{fig:p_values} for a table of the critical exponents $p$.

\begin{figure}[h]
	\begin{tikzpicture}[yscale=0.6]
		\node at (-6, -1) {$n = 3$};
		\node at (0-1/2,-1) {$\frac{1}{2}$};
		\node at (1-1/2,-1) {$(\frac{1}{2})$};
		
		\node at (-6, -2) {$n = 4$};
		\node at (0-2/2,-2) {$1$};
		\node at (1-2/2,-2) {$1$};
		\node at (2-2/2,-2) {$(1)$};
		
		\node at (-6, -3) {$n = 5$};
		\node at (0-3/2,-3) {$\frac{3}{2}$};
		\node at (1-3/2,-3) {$\frac{2}{3}$};
		\node at (2-3/2,-3) {$\frac{2}{3}$};
		\node at (3-3/2,-3) {$(\frac{3}{2})$};
		
		\node at (-6, -4) {$n = 6$};
		\node at (0-4/2,-4) {$2$};
		\node at (1-4/2,-4) {$1$};
		\node at (2-4/2,-4) {$1$};
		\node at (3-4/2,-4) {$1$};
		\node at (4-4/2,-4) {$(2)$};
		
		\node at (-6, -5) {$n = 7$};
		\node at (0-5/2,-5) {$\frac{5}{2}$};
		\node at (1-5/2,-5) {$\frac{4}{3}$};
		\node at (2-5/2,-5) {$\frac{3}{4}$};
		\node at (3-5/2,-5) {$\frac{3}{4}$};
		\node at (4-5/2,-5) {$\frac{4}{3}$};
		\node at (5-5/2,-5) {$(\frac{5}{2})$};
		
		\node at (-6, -6) {$n = 8$};
		\node at (0-6/2,-6) {$3$};
		\node at (1-6/2,-6) {$\frac{5}{3}$};
		\node at (2-6/2,-6) {$1$};
		\node at (3-6/2,-6) {$1$};
		\node at (4-6/2,-6) {$1$};
		\node at (5-6/2,-6) {$\frac{5}{3}$};
		\node at (6-6/2,-6) {$(3)$};
	\end{tikzpicture}

	\caption{\small Values of $p$ in Theorem \ref{thm:homology_obstruction} as $k = 1, \dots, n-2$. The unnecessary case $k = n-1$ is also listed in parenthesis to make the diagram symmetric.}\label{fig:p_values}
\end{figure}

\subsection{Connections to homeomorphic approximation}

Part of our motivation in studying this topic lies in questions related to approximating maps by homeomorphisms. For instance, such a  question  raises in the context of neohookean materials.
The \emph{neohookean material},  defined based on Hooke's law,  refers to a stored energy function  which increases to infinity when the Jacobian determinant $J_f$ approaches zero, see e.g.~\cite{Ball-global, Bauman-Owen-Phillips,  Fonseca-Gangbo,  Ciarlet-Necas, Conti-DeLellis,  Muller-Qi-Yan, Muller-Spector, Muller-Spector-Tang, Sverak}. The model examples
take the form
\begin{equation}\label{eq:neo}
\mathsf E_q^p [f] = \int_\Omega \left[ \abs{Df}^p+ J_f^{-q} \right]\, \dtext x \, ,  \quad p \ge n\, \quad    q>0 \quad \textnormal{and} \quad  \Omega \subset \R^n \, . 
\end{equation}
This model is also broadly studied by  physicists, materials scientists and engineers~\cite{Treloar_book}.  

Nonetheless, establishing non-interpenetration of matter in this setting remains a mathematical challenge. Naturally the first step towards to understanding the injectivity  of  minimizers is to enlarge the class of admissible homeomorphisms. Adopting the class of monotone maps of finite distortion ensures the existence of minimizers. However, to show that there is no \emph{Lavrentiev gap} between the classes of homeomorphisms and monotone maps leads to a suitable approximation question. Before proceeding to illuminate the general problem of approximating a monotone map by homeomorphisms, we note that a mapping $f \colon \Omega \to \R^n$ with $\mathsf E_q^p [f]  < \infty$ has finite distortion $K_f \in L^r_{\loc} (\Omega)$ where $n/p + 1/q = 1/r$.

Consider a continuous map $f \colon \overline{\Omega} \to \overline{\Omega'}$ between two simply connected planar Jordan domains $\Omega, \Omega' \subset \C$. By the theorem of Youngs \cite{Youngs_monotone-approx}, $f$ can be approximated uniformly with homeomorphisms if and only if $f$ is monotone. If moreover $f \in W^{1,p}(\Omega, \Omega')$ and the domain $\Omega'$ is Lipschitz regular, then a uniform homeomorphic approximation of $f$ can be improved to also converge in the $W^{1,p}$-norm, $1<p<\infty$, see \cite{Iwaniec-Onninen_Monotone-approx}.

Consider then a similar situation in higher dimensions. We restrict ourselves here to the simple case where $f \colon \overline{\B^n} \to \overline{\B^n}$ is continuous, $f(\B^n) = \B^n$, and $f \colon \partial \B^n \to \partial \B^n$ is a homeomorphism. Under which conditions can the map $f$ be uniformly approximated by homeomorphisms $f_i \colon \overline{\B^n} \to \overline{\B^n}$?

In this higher-dimensional case, monotonicity of $f$ is no longer sufficient. One additional necessary condition is that every fiber $f^{-1}\{y\}$ is \emph{cellular}; that is, $f^{-1}\{y\}$ is an intersection of a nested sequence of topological balls. Indeed, if $f_i \colon \overline{\B^n} \to \overline{\B^n}$ is a homeomorphism, and $\abs{f - f_i} < \eps/2$ uniformly, then for any $y \in \B^n$ with $d(y, \partial \B^n) > 2\eps$, we have that $U_\eps = f_i^{-1} \B^n(y, \eps)$ is a topological ball such that $f^{-1} \B^n(y, \eps/2) \subset U_\eps \subset f^{-1} \B^n(y, 2\eps)$. Note that such an approximation is impossible if $f^{-1}\{y\}$ is e.g.\ a smoothly embedded copy of $\S^1$; in particular,  maps similar to the one discussed in Theorem \ref{thm:Bing_meets_Sobolev_again} can not be homeomorphically approximated.

It turns out that for the most part, this extra necessary condition of cellular fibers is sufficient for uniform approximation. Indeed, a result of Siebenmann \cite{Siebenmann_cellular-approx} yields that if our map $f$ is monotone, $f$ has cellular fibers, and $n \neq 4$, then $f$ can be uniformly approximated by homeomorphisms. We note that the result is formulated in terms of a more general definition of CE-maps, which in particular holds for continuous proper $f \colon \B^n \to \B^n$ with cellular fibers and homeomorphic boundary values.

Now, we consider the situation where $f$ is a mapping of finite distortion. Hence, we obtain the following result from Theorem \ref{thm:homology_obstruction} and Proposition \ref{prop:topo_preimage_restrictions}.

\begin{cor}\label{cor:homologically_cellular_fibers}
	Let $f\colon \Omega \to \Omega' $ be a proper, continuous, monotone surjection in $\W^{1,n}(\Omega, \R^n)$, $n \ge 3$. Suppose that $K_f \in L^{(n-2)/2}_\loc(\Omega)$. Then for every $y \in \Omega'$, the set $f^{-1}\{y\}$ is an intersection of a nested sequence of neighborhoods $U_i$ that are rational homology balls; that is, the neighborhoods $U_i$ satisfy $H_k(U_i; \Q) = H_k(\B^n; \Q)$ for all $k \in \Z_{\geq 0}$.
\end{cor}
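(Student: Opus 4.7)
Given $y \in \Omega'$, the natural candidate sequence is $U_i := f^{-1} \B^n(y, r_i)$ for a decreasing sequence of radii $r_i \searrow 0$ with $\B^n(y, r_i) \Subset \Omega'$. Continuity of $f$ makes each $U_i$ open, the family is nested because the balls are, and $\bigcap_i U_i = f^{-1}\{y\}$. So the entire content of the corollary reduces to verifying that each $U_i$ is a rational homology ball, i.e.\ $H_k(U_i; \Q) \cong H_k(\B^n; \Q)$ for all $k \geq 0$.

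The plan is to establish first the analogous statement over $\R$ and then transfer to $\Q$ via universal coefficients. Over $\R$, the cases $k \in \{0, n-1, n\}$ follow directly from Proposition \ref{prop:topo_preimage_restrictions}, and $H_k(U_i; \R) = 0$ for $k > n$ because $U_i$ is an open subset of $\R^n$. For $k \in \{1, \dots, n-2\}$ I would apply Theorem \ref{thm:homology_obstruction}, and the key point is that the single hypothesis $K_f \in L^{(n-2)/2}_{\loc}(\Omega)$ dominates all the $k$-dependent exponents $p_k$ appearing there. Indeed, $k \mapsto (n-k-1)/(k+1)$ is decreasing on $1 \leq k < n/2$ with maximum $p_1 = (n-2)/2$, while $k \mapsto (k-1)/(n-k+1)$ is increasing on $n/2 < k \leq n-2$ with maximum $p_{n-2} = (n-3)/3 \leq (n-2)/2$, and the isolated case $k = n/2$ (even $n \geq 4$) gives $p_k = 1 \leq (n-2)/2$. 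Hence $K_f \in L^{(n-2)/2}_{\loc}$ activates Theorem \ref{thm:homology_obstruction} uniformly in $k$ and yields $H_k(U_i; \R) = 0$ throughout the remaining range.

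To pass from $\R$ to $\Q$, I would invoke the universal coefficient theorem: for any field $F$ of characteristic zero, $H_k(U_i; F) \cong H_k(U_i; \Z) \otimes_\Z F$, since such $F$ is a flat $\Z$-module. Consequently the vanishings $H_k(U_i; \R) = 0$ are equivalent to $H_k(U_i; \Z)$ being torsion, which in turn forces $H_k(U_i; \Q) = 0$; and $H_0(U_i; \R) \cong \R$ forces $H_0(U_i; \Z) \cong \Z$, as $H_0$ is free abelian on path components, hence $H_0(U_i; \Q) \cong \Q$. Assembling these pieces gives $H_k(U_i; \Q) \cong H_k(\B^n; \Q)$ for all $k$. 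The only part of this argument with content beyond direct invocation of the earlier results is the monotonicity check on $p_k$, which is a short elementary computation; the real-to-rational comparison and the neighborhood construction are both forced.
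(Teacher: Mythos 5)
Your proposal is correct and follows the same derivation the paper indicates (the corollary is stated in the paper as a consequence of Theorem~\ref{thm:homology_obstruction} and Proposition~\ref{prop:topo_preimage_restrictions}, without a spelled-out proof). Your monotonicity check that $(n-2)/2$ dominates all the critical exponents $p_k$, and the transfer from $\R$-coefficients to $\Q$-coefficients via flatness of $\R$ and $\Q$ over $\Z$, are exactly the supporting details the paper leaves implicit, and both are carried out correctly.
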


That is, under the assumptions of Corollary \ref{cor:homologically_cellular_fibers}, while we do not know if the fibers $f^{-1}\{y\}$ are an intersection of topological balls and therefore cellular, we do know that the fibers are intersections of neighborhoods that look like topological balls through the lens of rational homology. Note that the condition we need to assume corresponds by Theorem \ref{thm:fiber_Hausdorff_bound} to the fibers having zero $\cH^2$-measure. 
For an example of a rational homology ball $U$ that is not homeomorphic to $\B^3$, consider e.g.\ any homeomorphic copy of $U = \S^3 \setminus H$ where $H$ is a filled-in Alexander horned ball; see e.g.\ \cite[p. 169]{Hatcher_AlgTopo}. 

Hence, our results suggest the following question on homeomorphic approximation in three dimensions. Similar questions can also be stated in higher dimensions, but the case $n = 3$ is especially notable since the example of Theorem \ref{thm:Bing_meets_Sobolev_again} essentially shows that the assumptions cannot be improved.

\begin{qu}
Let  $f \colon \B^3 \onto \B^3 $ be a continuous, proper, monotone mapping of finite distortion in $W^{1,3}(\B^3, \R^3)$. Suppose that $K_f \in L^{1/2}(\B^3)$, and that $f$ extends to a continuous $f \colon \overline{\B^3} \to \overline{\B^3}$ with homeomorphic boundary values. Can $f$ be uniformly approximated by homeomorphisms $f_i \colon \overline{\B^3} \to \overline{\B^3}$? If yes, can this approximation be improved to a uniform and $W^{1,3}$ -approximation by $W^{1,3}$-homeomorphisms?
\end{qu}

\subsection{Idea of the proofs}

Our strategy in showing Theorem \ref{thm:homology_obstruction} starts with ideas from the study of the discreteness and openness of mappings of finite distortion, and then combines these ideas with the use of Sobolev de Rham cohomology theories. In particular, the main idea of the proofs can be essentially condensed to a single diagram:
\[\begin{tikzcd}
	C^\infty(\wedge^k f^{-1} \B^n(y, r)) \ar[r, "\push{f}"] & 
	L^\frac{n}{k}_\loc(\wedge^k \B^n(y, r)) 
		\ar[d, "\text{Sobolev-Poincar\'e}"]\\
	L^1_\loc(\wedge^{k-1} f^{-1} \B^n(y, r)) &
	L^\frac{n}{k-1}_\loc(\wedge^{k-1} \B^n(y, r)) 
	\ar[l, "f^*"]
\end{tikzcd}\]

Indeed, we take a smooth closed $k$-form $\omega$ on $f^{-1} \B^n(y, r)$, and push it forward in $f$ to a form $\push{f}\omega$ on $\B^n(y, r)$. Since the conformal $k$-cohomology of $\B^n(y, r)$ is trivial, we have $\push{f}\omega = d\tau$, where we may assume that the $(k-1)$-form $\tau$ is $L^{n/(k-1)}$-integrable by the Sobolev-Poincar\'e inequality. It then follows that $\omega = d f^* \tau$, implying that $\omega$ is trivial in local $L^1$-cohomology.

In order for the push-forward map to have the correct target space, we need that $K_f \in L^{(n-k)/k}_\loc(\Omega)$. For the pull-back, we similarly need that $K_f \in L^{(k-1)/(n-k-1)}_\loc(\Omega)$. Hence, under these assumptions, the above computation and a de Rham theorem for $L^p_\loc$-cohomologies lets us deduce that $H^k(f^{-1} \B^n(y, r); \R)$ vanishes. The cases $k \geq n/2$ in Theorem \ref{thm:homology_obstruction} hence follow by the universal coefficient theorem. Note that the use of the conformal exponent on the image side is crucial for our argument, as a higher intermediary exponent will break the push-forward map, and a lower one will similarly break the pull-back map.

For the cases $k \leq n/2$ in Theorem \ref{thm:homology_obstruction}, we use compactly supported cohomology. Indeed, since $f^{-1} \B^n(y, r)$ is an open subset of $\R^n$, its $k$-homology spaces are isomorphic to its compactly supported $(n-k)$-cohomology spaces by Poincar\'e duality. We can hence replace $k$ with $n-k$ and perform the same argument with spaces of compactly supported forms, which yields our result in the cases $k \leq n/2$.

\section{Differential forms and maps of finite distortion}

We consider a continuous, proper, surjective, topologically monotone $W^{1,n}$-map $f \colon \Omega \to \Omega'$ between open domains in $\R^n$. We begin by recalling the following useful facts about such maps.

\begin{lemma}\label{lem:almost_every_fiber_is_trivial}
	Let $f \in W^{1,n}(\Omega, \Omega')$ be a non-constant continuous monotone map between open domains. Suppose that $f$ is a mapping of finite distortion with $K_f^{1/(n-1)} \in L^1(\Omega)$. Then $f^{-1}\{y\}$ is a singleton for a.e.\ $y \in f(\Omega)$, $f$ satisfies both the Lusin $(N)$ and $(N^{-1})$-conditions, and $J_f > 0$ almost everywhere.
\end{lemma}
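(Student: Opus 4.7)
The plan is to derive the three conclusions from the standard theory of monotone $W^{1,n}$-mappings of finite distortion combined with a counting-function argument that exploits topological monotonicity.

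First, I would invoke the deep results already established in \cite[Theorem 4.13]{Hencl-Koskela-book}: under the assumptions of the lemma, $f$ satisfies the Lusin $(N)$-condition and $J_f > 0$ almost everywhere on $\Omega$. Since $f \in W^{1,n}(\Omega, \Omega')$, Hadamard's inequality $J_f \leq |Df|^n$ additionally yields $J_f \in L^1(\Omega)$.

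Next, the Lusin $(N)$-condition guarantees that the strong form of the area formula holds:
\[
	\int_\Omega J_f(x) \, \dd x = \int_{\R^n} N(f, y, \Omega) \, \dd y,
\]
where $N(f, y, \Omega) := \#(f^{-1}\{y\} \cap \Omega)$ is the multiplicity function. Since the left-hand side is finite, $N(f, y, \Omega)$ is finite for almost every $y \in f(\Omega)$. Topological monotonicity of $f$ means that each fiber $f^{-1}\{y\}$ is connected, and a connected set with finitely many points must be a singleton. Thus $f^{-1}\{y\}$ is a single point for a.e.\ $y \in f(\Omega)$.

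Finally, Lusin $(N^{-1})$ follows from $J_f > 0$ almost everywhere together with the area formula: for any null set $A \subset \R^n$, applying the area formula to $E := f^{-1}(A)$ gives $\int_{f^{-1}(A)} J_f \, \dd x = 0$, so $J_f$ vanishes almost everywhere on $f^{-1}(A)$, and positivity of $J_f$ forces $|f^{-1}(A)| = 0$. The main obstacle lies inside the invoked black box: proving Lusin $(N)$ and a.e.\ positivity of $J_f$ at the borderline integrability $K_f^{1/(n-1)} \in L^1$ is genuinely delicate and requires the full machinery developed in \cite{Hencl-Koskela-book}. Once these properties are on hand, the remaining argument is the short combination of the area formula with topological monotonicity sketched above.
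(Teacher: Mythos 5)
Your proposal inverts the logical order of the paper's argument in a way that creates a genuine circularity. You invoke \cite[Theorem 4.13]{Hencl-Koskela-book} at the outset to conclude $J_f > 0$ a.e.\ (and Lusin $(N)$), and only afterwards deduce the singleton-fiber property. But Theorem 4.13 requires the multiplicity function $N(f,\cdot,\Omega)$ to be essentially bounded as a hypothesis; the paper is explicit about this (``\emph{since the multiplicity function of $f$ is essentially bounded from above by $1$}'') and invokes Theorem 4.13 only \emph{after} the singleton-fiber conclusion has been established. Under the assumption $K_f^{1/(n-1)} \in L^1$ alone, with no control on the multiplicity, one cannot conclude $J_f > 0$ a.e.; that implication is the whole point of having the bounded-multiplicity intermediate step. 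Your proof therefore uses the conclusion to justify a premise. A related, more minor misattribution: the Lusin $(N)$-condition for $W^{1,n}$-MFD is not part of Theorem 4.13 but of \cite[Theorem 4.5]{Hencl-Koskela-book}, which needs no integrability assumption on $K_f$ and no multiplicity bound.

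The paper breaks the circle with a direct geometric argument that you are missing: using a.e.\ differentiability of $W^{1,n}$-MFD (Corollary 2.25 b) of the same reference) together with topological monotonicity, it observes that at any differentiability point $x$ whose fiber is non-degenerate one can find $x_j \to x$ with $f(x_j) = f(x)$, forcing $Df(x)$ to kill a direction and hence $J_f(x) = 0$; then Lusin $(N)$ and the change of variables show the image of the bad set is null. Only then is the multiplicity essentially $1$, and only then does Theorem 4.13 apply to yield $(N^{-1})$ and $J_f > 0$ a.e. Worth noting: the area-formula route you sketch for the singleton-fiber and $(N^{-1})$ conclusions is itself sound and a legitimate alternative to the paper's differentiability argument, but it must be run in the correct order --- Lusin $(N)$ from Theorem 4.5, then the area formula (using only $J_f \in L^1$, not positivity) to bound the multiplicity, and only afterwards Theorem 4.13 for $J_f > 0$ a.e.\ and $(N^{-1})$. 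As written, the proposal does not close this gap.
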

\begin{proof}
	We note that $f$ satisfies the Lusin (N) -condition by \cite[Theorem 4.5]{Hencl-Koskela-book}, and that $f$ is differentiable almost everywhere by \cite[Corollary 2.25 b)]{Hencl-Koskela-book}. 
	
	Let $B$ be the set of points $x \in \Omega$ where $f^{-1}(f(x))$ is not a singleton. If $x \in B$, then due to the monotonicity of $f$, we find a sequence of points $x_j \to x$ such that $f(x_j) = f(x)$ for every $j \in \Z_+$. If $f$ is also differentiable at $x$, then we must have $Df(x)v = 0$ for some vector $v \in \S^{n-1}$. It follows that $J_f(x) = 0$ at every such $x$. 
	
	We thus obtain that $J_f \equiv 0$ almost everywhere in $B$. Due to $f$ satisfying the Lusin (N) -condition, we may hence use change of variables to conclude that
	\[
		m_n(f(B)) \leq \int_B J_f = 0,
	\]
	which completes the proof of the fact that $f^{-1}\{y\}$ is a singleton for a.e.\ $y \in f(\Omega)$. The Lusin $(N^{-1})$ -condition and the fact that $J_f > 0$ a.e.\ now follow from \cite[Theorem 4.13]{Hencl-Koskela-book} using the assumption $K_f^{1/(n-1)} \in L^1(\Omega)$, since the multiplicity function of $f$ is essentially bounded from above by 1.
\end{proof}

We then consider how the integrability of $K_f$ effects the pull-backs of differential forms by $f$. In the following lemma we are mainly interested in the case $p = n/k$, but we regardless give a more general statement.

\begin{lemma}\label{lem:pullback_estimate}
	Let $f \in W^{1,n}(\Omega, \Omega')$ be a continuous, proper, monotone surjection, where $\Omega, \Omega'$ are open domains. Let $\omega \in L^p(\wedge^k \Omega')$ and let $K_f^q \in L^1(\Omega)$, with $k \in \{1, \dots, n\}$, $n/k \leq p \leq \infty$ and $(n-1)^{-1} \leq q \leq \infty$. Then
	\[
		\abs{f^* \omega}^r \in L^1(\Omega) \qquad \text{where } r = \frac{n}{k + \frac{n}{pq}}.
	\]
	More precisely, we have the estimate
	\[
		\norm{f^* \omega}_r \leq \norm{\omega}_p \norm{K_f}_q^\frac{1}{p} \norm{Df}_n^{\frac{qk-n}{q}}.
	\]
\end{lemma}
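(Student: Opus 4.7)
The plan is to combine the pointwise pullback estimate $\abs{f^*\omega(x)} \le C_{n,k}\abs{\omega(f(x))}\abs{Df(x)}^k$, valid at almost every point of differentiability of $f$, with the distortion inequality $\abs{Df}^n \le K_f J_f$, and then run a three-factor Hölder inequality that is calibrated so that one factor integrates, via the area formula, to $\norm{\omega}_p^p$, a second to $\norm{K_f}_q^q$, and the third to $\norm{Df}_n^n$. The specific value of the exponent $r$ appears naturally as the condition forcing the three conjugate Hölder exponents to sum to $1$.

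First I would invoke Lemma~\ref{lem:almost_every_fiber_is_trivial}. Since $q \ge 1/(n-1)$ gives $K_f^{1/(n-1)} \in L^1_{\loc}(\Omega)$, that lemma yields both that $f$ has multiplicity one almost everywhere and that $f$ satisfies the Lusin~$(N)$-condition. The area formula therefore gives, for any non-negative measurable $u$ on $\Omega'$,
\[
\int_\Omega u(f(x))\, J_f(x)\, dx \;=\; \int_{f(\Omega)} u(y)\, dy .
\]
This clean identity is what will allow the factor $\norm{\omega}_p$ to appear cleanly on the right-hand side, once $u$ is taken to be $\abs{\omega}^p$.

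Next is the pointwise splitting. Starting from $\abs{f^*\omega}^r \le \abs{\omega\circ f}^r \abs{Df}^{kr}$, set $s := r/p$ and split
\[
\abs{Df}^{kr} \;=\; \abs{Df}^{ns}\cdot \abs{Df}^{kr-ns},
\]
using the distortion inequality raised to the power $s \ge 0$ on the first factor, $\abs{Df}^{ns} \le K_f^s J_f^s$. This gives
\[
\abs{f^*\omega}^r \;\le\; \bigl(\abs{\omega\circ f}^r J_f^{r/p}\bigr) \cdot K_f^{r/p} \cdot \abs{Df}^{r(k - n/p)},
\]
where the hypothesis $p \ge n/k$ ensures that the last exponent is non-negative. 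I then apply Hölder with three exponents: take $\alpha = p/r$ so that the first factor raised to $\alpha$ becomes $\abs{\omega\circ f}^p J_f$; take $\beta = pq/r$ so that the second factor raised to $\beta$ becomes $K_f^q$; and determine $\gamma$ by the requirement that $\abs{Df}^{r(k-n/p)}$ raised to $\gamma$ becomes $\abs{Df}^n$. A direct verification shows that the resulting conjugacy relation $1/\alpha + 1/\beta + 1/\gamma = 1$ is equivalent to $r = n/(k + n/(pq))$, which is exactly the value of $r$ in the statement. Plugging this in, applying the area-formula step to the first factor, and taking an $r$-th root yields the stated inequality.

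The main thing to watch is the simultaneous bookkeeping: the choices of the three Hölder exponents, the distortion power $s$, and the way one splits $\abs{Df}^{kr}$ must all be made consistently so that the conjugacy relation recovers precisely the stated $r$. Once that fit is verified, the remaining points are routine, namely the justification of the pointwise bound $\abs{f^*\omega} \le C_{n,k}\abs{\omega\circ f}\abs{Df}^k$ almost everywhere for a Sobolev map (which requires only a.e.\ differentiability, given by Lemma~\ref{lem:almost_every_fiber_is_trivial}), and the usual reinterpretation of the estimate when $p = \infty$ or $q = \infty$ via limiting conventions.
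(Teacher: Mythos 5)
Your proposal is correct and follows essentially the same route as the paper: the same pointwise bound $\abs{f^*\omega}\le\abs{\omega\circ f}\abs{Df}^k$, the same split of $\abs{Df}^{kr}$ using the distortion inequality with power $s=r/p$, the same three-exponent H\"older calibrated so the conjugacy relation pins down $r$, and the same appeal to Lemma~\ref{lem:almost_every_fiber_is_trivial} plus change of variables to recover $\norm{\omega}_p$. The only cosmetic difference is that you insert an unneeded constant $C_{n,k}$ in the pointwise pullback bound, whereas the paper's stated estimate is sharp (no constant) because $\abs{\wedge^k A}\le\abs{A}^k$ for operator norms.
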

\begin{proof}
	The measurability of $f^* \omega$ follows from the Lusin $(N^{-1})$-condition. The case $q = \infty$ is an immediate consequence of the standard result that quasiregular maps preserve the $(n/k)$-integrability of $k$-forms; see e.g.\ \cite[Section 2.2]{Kangasniemi-Pankka_PLMS}. The case $p = \infty$ is similarly simple, as we can then estimate
	\[
		\int_\Omega \abs{f^* \omega}^\frac{n}{k} \leq 
		\int_\Omega (\abs{\omega}^\frac{n}{k} \circ f) \abs{Df}^{n}
		\leq \norm{\omega}_\infty^\frac{n}{k} \int_\Omega \abs{Df}^{n} < \infty.
	\]
	
	We then consider the case $p \neq \infty \neq q$, where we use H\"older's inequality to estimate that
	\begin{multline*}
		\int_\Omega \abs{f^* \omega}^r \leq 
		\int_\Omega (\abs{\omega}^r \circ f) \abs{Df}^{rk}
		= \int_\Omega (\abs{\omega}^r \circ f) J_f^{\frac{r}{p}} K_f^{\frac{r}{p}} \abs{Df}^{(k - p^{-1} n)r}\\
		\leq \left( \int_\Omega (\abs{\omega}^p \circ f) J_f 
			\right)^{\frac{r}{p}}
		\left( \int_\Omega K_f^q 
			\right)^{\frac{r}{pq}}
		\left( \int_\Omega \abs{Df}^{\frac{(pqk-qn)r}{pq - qr - r}}
			\right)^{\frac{pq - qr - r}{pq}}.
	\end{multline*}
	Note that our use of H\"older is valid if $	pq - qr - r \geq 0$, which holds since
	\begin{align*}
		pq - qr - r = \frac{q(pk-n)}{k + \frac{n}{pq}}
	\end{align*}
	and since we assumed that $p \geq n/k$. With a change of variables, we have
	\[
		\int_\Omega (\abs{\omega}^p \circ f) J_f = \int_{\Omega'} \abs{\omega}^p < \infty.
	\]
	Finally, we see using our definition of $r$ that
	\[
		\frac{(pqk-qn)r}{pq - qr - r} = \frac{(pqk-qn)n}{pq(k + \frac{n}{pq}) - qn - n} = \frac{(pqk-qn)n}{kpq + n - nq - n} = n,
	\]
	and hence
	\[
		\int_\Omega \abs{Df}^{\frac{(pqk-qn)r}{pq - qr - r}}
		= \int_\Omega \abs{Df}^n < \infty.
	\]
\end{proof}

Next, we wish to define a push-forward map for a continuous monotone surjection $f \in W^{1,n}(\Omega, \Omega')$ with integrable $K_f^{1/(n-1)}$. By Lemma \ref{lem:almost_every_fiber_is_trivial}, for a.e.\ $y \in \Omega'$ there exists a unique point $f^{-1}(y) \in \Omega$ such that $f(f^{-1}(y)) = y$. Given a differential $k$-form $\omega$ on $\Omega$, we define
\begin{equation}\label{eq:push_def}
	(\push{f} \omega)_y = \omega_{f^{-1}(y)} \circ \wedge^k [Df(f^{-1}(y))]^{-1}
\end{equation}
for a.e.\ $y \in \Omega'$. Using the Lusin conditions of $f$, it can be seen that the resulting map is measurable for measurable forms $\omega$.

We then prove a similar norm estimate for the push-forward map.
\begin{lemma}\label{lem:pushforward_estimate}
	Let $f \in W^{1,n}(\Omega, \Omega')$ be a continuous, proper, monotone surjection, where $\Omega, \Omega'$ are open domains. Let $\omega \in L^p(\wedge^k \Omega)$ with $k \in \{1, \dots, n\}$ and $n/k \leq p \leq \infty$. Then
	\[
		\abs{\push{f} \omega} \in L^\frac{n}{k}(\Omega') \quad \text{if} \quad K_f \in L^\frac{(n-k)p}{kp-n}(\Omega).
	\]
	More precisely, we have the estimate
	\[
		\norm{\push{f} \omega}_\frac{n}{k} \leq \norm{\omega}_{p} \norm{K_f}_{\frac{(n-k)p}{kp-n}}^\frac{n-k}{n}.
	\]
\end{lemma}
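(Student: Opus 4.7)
The plan is to reduce the bound to a pointwise inequality on the operator norm of $\wedge^k [Df(x)]^{-1}$, then pull the integral back to $\Omega$ via a change of variables, and finally separate $\omega$ from $K_f$ using Hölder's inequality.

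First I would note that our assumption $K_f \in L^{(n-k)p/(kp-n)}(\Omega)$ combined with $p \geq n/k$ implies $K_f^{1/(n-1)} \in L^1_{\loc}(\Omega)$ in the nontrivial range $k \leq n-1$ (a short arithmetic check on the exponents). Lemma \ref{lem:almost_every_fiber_is_trivial} then supplies three crucial facts: $J_f > 0$ almost everywhere, $f$ enjoys both Lusin conditions, and $f^{-1}\{y\}$ is a singleton for almost every $y \in \Omega'$. Consequently $Df(f^{-1}(y))$ is invertible for a.e.\ $y \in \Omega'$, the push-forward $\push{f}\omega$ from \eqref{eq:push_def} is well-defined and measurable, and we may freely change variables between $\Omega$ and $f(\Omega)$.

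Next comes the pointwise estimate, which is the crux of the argument. Using the Hodge-type identity $\wedge^k A^{-1} = (\det A)^{-1} \hodge (\wedge^{n-k} A)$ for $A \in GL_n^+(\R)$ and the standard bound $\abs{\wedge^{n-k} A} \leq \abs{A}^{n-k}$ for the largest singular value of an exterior power, I would obtain
\[
	\abs{\wedge^k [Df(x)]^{-1}} \leq \frac{\abs{Df(x)}^{n-k}}{J_f(x)}
\]
at a.e.\ point where $Df(x)$ is invertible. Plugging in the distortion inequality $\abs{Df}^n \leq K_f J_f$ converts this into the key bound
\[
	\abs{\wedge^k [Df(x)]^{-1}} \leq K_f(x)^{(n-k)/n} J_f(x)^{-k/n}.
\]

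Combining this with the definition of $\push{f}\omega$, raising to the power $n/k$, and changing variables $y = f(x)$ (valid by the Lusin $(N^{-1})$-condition and the positivity of $J_f$) yields
\[
	\int_{\Omega'} \abs{\push{f}\omega}^{n/k}\,dy \leq \int_{\Omega} \abs{\omega(x)}^{n/k} K_f(x)^{(n-k)/k}\,dx.
\]
Finally, Hölder's inequality with conjugate exponents $(pk/n,\,pk/(pk-n))$ (which are well-defined because $p \geq n/k$) separates the integrand into the two desired factors, and taking the $(k/n)$-th root produces the stated inequality. The case $p = \infty$ is checked directly as in Lemma \ref{lem:pullback_estimate}.

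The main obstacle is the pointwise bound on $\abs{\wedge^k [Df]^{-1}}$; the rest is a change of variables followed by Hölder. Beyond that, I would just make sure to record explicitly where the integrability threshold $p \geq n/k$ enters, namely in ensuring that Hölder's exponents are well-defined and that $K_f^{(n-k)/k}$ lies in the correct Lebesgue space.
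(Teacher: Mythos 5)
Your proposal is correct and follows essentially the same route as the paper: the Hodge/adjugate bound $\abs{\wedge^k [Df]^{-1}} \leq J_f^{-1}\abs{Df}^{n-k}$, a change of variables giving $\int_{\Omega'}\abs{\push{f}\omega}^{n/k} \leq \int_\Omega \abs{\omega}^{n/k} K_f^{(n-k)/k}$, and H\"older with exponents $pk/n$ and $pk/(pk-n)$, with the $p=\infty$ case handled separately. Folding the distortion inequality into the pointwise bound before changing variables, rather than after, is only a cosmetic reordering of the paper's argument.
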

\begin{proof}
	Note that $(n-k)p/(kp-n) \geq (n-k)/k \geq 1/(n-1)$, so the push-forward is well defined. We note that for a.e.\ $x \in \Omega$ and all $v \in \wedge^k T_x \Omega$, we have
	\[
		\ip{\wedge^k Df(x) v}{\hodge (\wedge^{n-k} Df(x)) \hodge v'}
		= \hodge (\wedge^n Df(x)) (v \wedge \hodge v')
		= \ip{v}{v'} J_f(x).
	\]
	Applying this with $v = \wedge^k [Df(x)]^{-1} w$, $w \in T_{f(x)} \Omega'$, and $\abs{w} = \abs{v'} = 1$, leads to the estimate
	\[
		\abs{\wedge^k [Df(x)]^{-1}} \leq J_f(x)^{-1} \abs{\wedge^{n-k} Df(x)} \leq J_f(x)^{-1} \abs{Df(x)}^{n-k}.
	\]
	Hence, by using the almost everywhere defined function $f^{-1}$, a change of variables in $f$ gives us
	\begin{multline*}
		\int_{\Omega'} \abs{\push{f} \omega}^\frac{n}{k}
		\leq \int_{\Omega'} \left(\abs{\omega} J_f(x)^{-1}\abs{Df(x)}^{n-k}\right)^\frac{n}{k} \circ f^{-1}\\
		= \int_{\Omega} \abs{\omega}^\frac{n}{k} J_f^{-\frac{n-k}{k}} \abs{Df(x)}^{\frac{(n-k)n}{k}}
		= \int_{\Omega} \abs{\omega}^\frac{n}{k} K_f^\frac{n-k}{k}f.
	\end{multline*}
	In the case $p = \infty$, we may estimate $\abs{\omega} \leq \norm{\omega}_\infty$, and the result follows since our assumed degree of integrability from $K_f$ is exactly $(n-k)/k$ in this case. In other cases, we use H\"older's inequality, and get the desired
	\[
		\int_{\Omega} \abs{\omega}^\frac{n}{k} K_f^\frac{n-k}{k}f
		\leq \left( \int_{\Omega} \abs{\omega}^p \right)^{\frac{n}{pk}}
		\left( \int_{\Omega} K_f^{\frac{n-k}{k} \cdot \frac{pk}{pk-n}} \right)^{\frac{pk-n}{pk}} < \infty.
	\]
\end{proof}

\subsection{Weak differentials}

We let $W^{d, p, q}(\wedge^k \Omega)$ denote the space of measurable differential $k$-forms $\omega \in L^p(\wedge^k \Omega)$ which have a weak differential $d\omega \in L^q(\wedge^{k+1} \Omega)$. Recall that a $(k+1)$-form $d\omega \in L^1_\loc(\wedge^{k+1} \Omega)$ is a weak differential of $\omega \in L^1_\loc(\wedge^k \Omega)$ if
\[
	\int_\Omega \omega \wedge d\eta = (-1)^{k+1} \int_\Omega d\omega \wedge \eta
\]
for every compactly supported smooth $\eta \in C^\infty_c(\wedge^{n-k-1} \Omega)$. We use the shorthand $W^{d,p}(\wedge^k \Omega) = W^{d,p,p}(\wedge^k \Omega)$.

We similarly use $W^{d, p, q}_\loc(\wedge^k \Omega)$ to denote the space of measurable $k$-forms $\omega \in L^p_\loc(\wedge^k \Omega)$ with a weak differential $d\omega \in L^q_\loc(\wedge^{k+1} \Omega)$. We also denote by $W^{d,p,q}_c(\wedge^k \Omega)$ the space of compactly supported elements of $W^{d, p, q}(\wedge^k \Omega)$; recall that the \emph{support} $\spt \omega$ of $\omega \in L^p_\loc(\wedge^k \Omega)$ is the set of all $x \in \Omega$ such that there exists no neighborhood $U$ of $x$ with $\omega = 0$ a.e.\ on $U$. Similarly as above, we use the shorthands $W^{d,p}_\loc(\wedge^k \Omega) = W^{d,p,p}_\loc(\wedge^k \Omega)$ and $W^{d,p}_c(\wedge^k \Omega) = W^{d,p,p}_c(\wedge^k \Omega)$.

We recall the following standard result which implies that $f^* d\omega = d f^* \omega$ when $\omega$ is smooth and $f$ is a continuous $W^{1,p}_\loc$-map with suitably high $p$. For the case when $\omega$ is compactly supported, we refer to e.g. \cite[Lemma 2.2]{Kangasniemi-Onninen_Heterogeneous}, and the general version follows using the continuity of $f$ and a locally finite partition of unity. 

\begin{lemma}\label{lem:pullback_d_commutation}
	Let $\Omega, \Omega' \subset \R^n$ be open domains. Suppose that $f \in C(\Omega, \Omega') \cap W^{1,p}_\loc(\Omega, \Omega')$. If $\omega \in C^\infty(\wedge^k M)$ and $p \geq k + 1$, then $f^* \omega \in W^{d, p/k, p/(k+1)}_\loc(\wedge^k \Omega)$ and $d f^* \omega = f^* d \omega$.
\end{lemma}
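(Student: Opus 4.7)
The plan is to split the conclusion into two parts. First, for the integrability assertions $f^*\omega \in L^{p/k}_\loc(\wedge^k\Omega)$ and $f^*d\omega \in L^{p/(k+1)}_\loc(\wedge^{k+1}\Omega)$, fix $K \Subset \Omega$; continuity of $f$ makes $f(K)$ a compact subset of $\Omega'$, where $\abs{\omega}$ and $\abs{d\omega}$ are bounded by constants $M$ and $M'$. Standard pointwise pullback bounds give
\[
\abs{f^*\omega(x)} \leq C_{n,k}\, M \abs{Df(x)}^{k}, \qquad \abs{f^*d\omega(x)} \leq C_{n,k+1}\, M' \abs{Df(x)}^{k+1}
\]
almost everywhere on $K$; raising these to the powers $p/k$ and $p/(k+1)$ respectively, integrating, and invoking $f \in W^{1,p}_\loc(\Omega,\Omega')$ together with $p \geq k+1$ yields both local integrability claims. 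Combined with the weak identity $df^*\omega = f^*d\omega$ proved below, this gives $f^*\omega \in W^{d, p/k, p/(k+1)}_\loc(\wedge^k\Omega)$.

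The core of the lemma is the identity $df^*\omega = f^*d\omega$, which I would reduce to the compactly supported case \cite[Lemma 2.2]{Kangasniemi-Onninen_Heterogeneous} via a cutoff on $\Omega'$. Given a test form $\eta \in C^\infty_c(\wedge^{n-k-1}\Omega)$ with support $K = \spt\eta$, the set $f(K)$ is compact in $\Omega'$, so there exists $\chi \in C^\infty_c(\Omega')$ with $\chi \equiv 1$ on an open neighborhood $V$ of $f(K)$. Then $\tilde\omega := \chi \omega \in C^\infty_c(\wedge^k \Omega')$, and the compactly supported version of the lemma yields
\[
\int_\Omega f^*\tilde\omega \wedge d\eta = (-1)^{k+1} \int_\Omega f^*d\tilde\omega \wedge \eta.
\]
On the open set $U := f^{-1}(V) \supset K$ one has $\chi \equiv 1$ and $d\chi \equiv 0$, so $\tilde\omega = \omega$ and $d\tilde\omega = d\omega$ pointwise on $V$, which via the pointwise pullback formula gives $f^*\tilde\omega = f^*\omega$ and $f^*d\tilde\omega = f^*d\omega$ almost everywhere on $U$. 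Since $\spt\eta$ and $\spt d\eta$ both lie in $U$, the displayed identity tested against $\eta$ is precisely the defining relation for $f^*d\omega$ being the weak exterior differential of $f^*\omega$.

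The main obstacle is really the bookkeeping of the cutoff reduction: the cutoff must be built on the target $\Omega'$ rather than on the source $\Omega$, and one has to exploit the fact that $d\chi \equiv 0$ on the neighborhood $V$ to equate $d\tilde\omega$ with $d\omega$ there. An equivalent route, hinted at in the excerpt, is to use a locally finite partition of unity $\{\phi_j\}$ on $\Omega'$ subordinate to a cover by relatively compact open sets and apply the compactly supported identity to each $\phi_j \omega$; continuity of $f$ ensures that on any compact subset of $\Omega$ only finitely many terms contribute, so no convergence issue arises.
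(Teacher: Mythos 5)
Your proof is correct and takes essentially the same route the paper sketches: reduce to the compactly supported case of \cite[Lemma 2.2]{Kangasniemi-Onninen_Heterogeneous} by localizing $\omega$ on the target side and using continuity of $f$ to pull that localization back to the source, where the test form is supported. Your single-cutoff argument (choose $\chi$ depending on $\spt\eta$, note that $d\chi\equiv 0$ on a neighborhood of $f(\spt\eta)$) is a clean implementation of the same idea the paper phrases via a locally finite partition of unity on $\Omega'$, and you correctly observe the two routes are equivalent.
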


Using Lemma \ref{lem:pullback_d_commutation}, we prove a similar result for the push-forward in our setting.

\begin{lemma}\label{lem:pushforward_d_commutation}
	Let $f \in W^{1,n}(\Omega, \Omega')$ be a continuous, proper, monotone surjection, where $\Omega, \Omega' \subset \R^n$ are open, bounded domains. Let $\omega \in C^{\infty}(\wedge^k \Omega)$ with $k \in \{1, \dots, n-1\}$, and let $K_f \in L^{(n-k)/k}(\Omega)$. Then we have $\push{f} \omega \in W^{d, n/k, n/(k+1)}(\wedge^k \Omega')$ and $d \push{f} \omega = \push{f} d \omega$.
\end{lemma}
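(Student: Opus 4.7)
The plan is to establish in sequence the two claims of the lemma: (i) the integrability $\push{f}\omega \in L^{n/k}(\wedge^k \Omega')$ and $\push{f} d\omega \in L^{n/(k+1)}(\wedge^{k+1} \Omega')$, and (ii) the weak commutation $d \push{f}\omega = \push{f} d\omega$.

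For (i), I would invoke Lemma \ref{lem:pushforward_estimate} in the limiting case $p = \infty$, applied separately to $\omega$ and to the smooth $(k+1)$-form $d\omega$. Our hypothesis $K_f \in L^{(n-k)/k}(\Omega)$ is exactly what the lemma requires for $\omega$, and on the bounded domain $\Omega$ is strictly stronger (by H\"older) than $K_f \in L^{(n-k-1)/(k+1)}(\Omega)$, which is the condition needed for $d\omega$. Global boundedness of $\omega$ and $d\omega$ is not automatic from smoothness on $\Omega$, but can be arranged by a cutoff argument against the compactly supported test forms appearing below.

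The main task is (ii): for every $\eta \in C^\infty_c(\wedge^{n-k-1}\Omega')$, verify the identity
\[
\int_{\Omega'} \push{f}\omega \wedge d\eta
\stackrel{(a)}{=} \int_\Omega \omega \wedge f^* d\eta
\stackrel{(b)}{=} \int_\Omega \omega \wedge d(f^*\eta)
\stackrel{(c)}{=} (-1)^{k+1}\!\int_\Omega d\omega \wedge f^*\eta
\stackrel{(d)}{=} (-1)^{k+1}\!\int_{\Omega'} \push{f} d\omega \wedge \eta.
\]
Identities (a) and (d) come from the change-of-variables formula for top forms, which is available thanks to Lemma \ref{lem:almost_every_fiber_is_trivial}: the fiber $f^{-1}\{y\}$ is a singleton for a.e.\ $y$, $f$ satisfies both Lusin conditions, and $J_f > 0$ a.e.; consequently $f^* \push{f}\omega = \omega$ a.e.\ and $\push{f}(d\omega \wedge f^*\eta) = \push{f} d\omega \wedge \eta$ a.e. Identity (b) is Lemma \ref{lem:pullback_d_commutation}, applicable since $f \in W^{1,n}$ and $n \geq k+1$. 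Identity (c) is integration by parts: by properness of $f$, $f^*\eta$ has compact support in $\Omega$ (since $\spt f^*\eta \subset f^{-1}(\spt\eta)$), and by Lemma \ref{lem:pullback_d_commutation} its weak differential is $f^* d\eta$.

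The main obstacle is step (c): $\omega$ is smooth on $\Omega$ but need not be compactly supported, so it is not immediately a valid test form in the definition of the weak differential of $f^*\eta$. The fix is to insert a cutoff $\chi \in C^\infty_c(\Omega)$ with $\chi \equiv 1$ on a neighborhood of the compact set $\spt f^*\eta$; then $\chi\omega \in C^\infty_c(\wedge^k\Omega)$ is a legitimate test form, yielding
\[
\int_\Omega \chi\omega \wedge d(f^*\eta) = (-1)^{k+1}\int_\Omega d(\chi\omega) \wedge f^*\eta,
\]
and since $\chi \equiv 1$ on $\spt f^*\eta$ the integrands agree with those of (c) on the relevant set. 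Finiteness of every integral in the chain follows by H\"older combined with the bounds in Lemmas \ref{lem:pullback_estimate} and \ref{lem:pushforward_estimate}: the compactly supported pullbacks satisfy $f^*\eta \in L^{n/(n-k-1)}$ and $f^*d\eta \in L^{n/(n-k)}$, and they pair against the smooth $\omega,\, d\omega$ (bounded on $\spt f^*\eta$) to produce finite integrals on the $\Omega$ side, with the analogous Hölder estimate on the $\Omega'$ side.
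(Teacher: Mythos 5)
Your argument is correct, and for the substantive step it takes a genuinely different route than the paper. Both proofs reduce the claim via two Sobolev change-of-variables identities and Lemma~\ref{lem:pullback_d_commutation} to the integration-by-parts identity $\int_\Omega \omega \wedge d(f^*\eta) = (-1)^{k+1}\int_\Omega d\omega \wedge f^*\eta$, but you justify that step differently. The paper mollifies the compactly supported Sobolev form $f^*\eta$ by convolution, obtaining smooth compactly supported $\alpha_j$ with $\alpha_j \to f^*\eta$ in $L^{n/(n-k-1)}$ and $d\alpha_j \to f^*d\eta$ in $L^{n/(n-k)}$, applies Stokes for each $j$, and passes to the limit via H\"older. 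You instead cut off $\omega$ to $\chi\omega \in C^\infty_c(\wedge^k\Omega)$ and use it directly as a test form in the \emph{definition} of the weak differential of $f^*\eta$, which Lemma~\ref{lem:pullback_d_commutation} guarantees. Your route avoids the mollification and the limit argument entirely; it is arguably more elementary, at the cost of a sign-tracking computation to convert between the two orientations of the duality pairing.

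Two small points. First, to guarantee simultaneously that $\chi d(\chi\omega) = \chi d\omega$ on $\spt f^*\eta$ and that $\chi f^*d\eta = f^*d\eta$, you should take $\chi \equiv 1$ on a neighborhood of the compact set $f^{-1}(\spt\eta)$, which contains both $\spt f^*\eta$ and $\spt f^*d\eta$; requiring $\chi \equiv 1$ only near $\spt f^*\eta$ is not quite enough since the two supports are not nested in general. Second, your observation that $\omega \in C^\infty(\wedge^k\Omega)$ need not lie in $L^\infty(\wedge^k\Omega)$ is a fair one, and affects the paper's proof equally (its invocation of Lemma~\ref{lem:pushforward_estimate} with $p=\infty$ has the same issue); in practice the lemma is applied to compactly supported $\omega$ (as in Lemma~\ref{lem:pullback_commutation_lemma}), or the conclusion should be read with $\loc$. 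Neither of these affects the correctness of your argument for the weak-commutation identity itself.
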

\begin{proof}
	Due to Lemma \ref{lem:pushforward_estimate}, the only thing we have to check is that $d \push{f} \omega = \push{f} d \omega$ in the weak sense. Let $\eta \in C^\infty_c(\wedge^{n-k-1} \Omega')$. We use a Sobolev change of variables to conclude that
	\[
		\int_{\Omega'} (\push{f} d\omega) \wedge \eta
		= \int_{\Omega} f^*((\push{f} d\omega) \wedge \eta)
		= \int_{\Omega} d\omega \wedge f^* \eta.
	\]
	By Lemma \ref{lem:pullback_d_commutation}, $f^* \eta \in W^{d,n/(n-k-1), n/(n-k)}_\loc(\wedge^{n-k-1} \Omega)$ and $f^* d\eta = d f^* \eta$. Moreover, since $f$ is proper, the form $f^* \eta$ is compactly supported. We let $\alpha_j$ be the convolutions of $f^* \eta$ with a sequence of mollifying kernels, in which case $\alpha_j \in C^\infty_c(\wedge^{n-k-1} \Omega)$ for large enough $j$, $\alpha_j \to f^*\eta$ in the $L^{n/(n - k - 1)}$-norm, and $d\alpha_j \to f^*d\eta$ in the $L^{n/(n - k)}$-norm. A standard application of H\"older's inequality for wedge products now implies that
	\begin{multline*}
		\int_{\Omega} d\omega \wedge f^* \eta
		= \lim_{i \to \infty} \int_{\Omega} d\omega \wedge \alpha_i
		= (-1)^{k+1} \lim_{i \to \infty} \int_{\Omega} \omega \wedge d \alpha_i\\
		= (-1)^{k+1} \int_{\Omega} \omega \wedge f^*d\eta.
	\end{multline*}
	Finally, one more change of variables gives us our result by
	\[
		\int_{\Omega} \omega \wedge f^*d\eta
		= \int_{\Omega} f^*(\push{f} \omega \wedge d\eta)
		= \int_{\Omega'} \push{f} \omega \wedge d\eta.
	\]
\end{proof}

We also require a somewhat specific commutation result for the weak exterior derivative $d$ and the pull-back map. The proof is similar to the proof of the previous lemma.

\begin{lemma}\label{lem:pullback_commutation_lemma}
	Let $f \in W^{1,n}(\Omega, \Omega')$ be a continuous, proper, monotone surjection, where $\Omega, \Omega' \subset \R^n$ are open domains. Let $\omega \in C^{\infty}(\wedge^k \Omega)$ with $k \in \{2, \dots, n\}$, and let $K_f^q \in L^1(\Omega)$ with $(n-1)^{-1} \leq q \leq \infty$. Suppose that $\push{f} \omega = d\tau$ weakly for some $\tau \in L^{n/(k-1)}_\loc(\wedge^{k-1} \Omega')$. If
	\[
		n \geq (k-1)(1+q^{-1}),
	\]
	then $\omega = d f^* \tau$ weakly.
\end{lemma}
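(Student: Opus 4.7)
The strategy is to mirror the proof of Lemma~\ref{lem:pushforward_d_commutation}, with pull-back and push-forward playing swapped roles, by approximating $\tau$ with smooth mollifications and invoking Lemma~\ref{lem:pullback_d_commutation} on the approximants. Fix $\eta \in C^\infty_c(\wedge^{n-k}\Omega)$ and choose a precompact open $V' \Subset \Omega'$ with $f(\spt\eta) \subset V'$. Since $\tau \in L^{n/(k-1)}_\loc$, Lemma~\ref{lem:pullback_estimate} applied with $p = n/(k-1)$ gives $f^*\tau \in L^r_\loc$ with $r = nq/((k-1)(q+1))$; the hypothesis $n \geq (k-1)(1+q^{-1})$ is exactly what forces $r \geq 1$. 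In particular, $f^*\tau \in L^1_\loc$, its distributional differential is well-defined, and it suffices to verify
\[
	\int_\Omega f^*\tau \wedge d\eta = (-1)^k \int_\Omega \omega \wedge \eta
\]
for every such $\eta$.

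Let $\tau_j$ be smooth mollifications of $\tau$ on $V'$ with $\tau_j \to \tau$ in $L^{n/(k-1)}(V')$. For each $\tau_j$, Lemma~\ref{lem:pullback_d_commutation} gives $df^*\tau_j = f^*d\tau_j$ weakly, and integration by parts against $\eta$ yields $\int_\Omega f^*\tau_j \wedge d\eta = (-1)^k \int_\Omega f^*d\tau_j \wedge \eta$. The left-hand side converges to $\int_\Omega f^*\tau \wedge d\eta$ by Lemma~\ref{lem:pullback_estimate} applied to $\tau - \tau_j$ and Hölder duality with the bounded form $d\eta$. For the right-hand side, change of variables on $n$-forms---using $\eta = f^*\push{f}\eta$ a.e.---rewrites it as $(-1)^k \int_{V'} d\tau_j \wedge \push{f}\eta$. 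I then establish the weak commutation $d\push{f}\eta = \push{f}d\eta$ on $V'$ by adapting the proof of Lemma~\ref{lem:pushforward_d_commutation}: for $\alpha \in C^\infty_c(\wedge^{k-1}V')$, change of variables on $\int_{V'}\push{f}d\eta \wedge \alpha = \int_\Omega d\eta \wedge f^*\alpha$ combined with Stokes on $\Omega$---where $f^*\alpha \in W^{d,n/(k-1),n/k}_c$ by Lemma~\ref{lem:pullback_d_commutation}, approximable by $C^\infty_c$ forms via mollification---gives the weak derivative identity.

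The hypothesis $n \geq (k-1)(1+q^{-1})$ is invoked critically to ensure $\push{f}d\eta \in L^{n/(n-k+1)}_\loc(V')$: applying Hölder's inequality to the pointwise bound $|\push{f}d\eta|(y) \leq C J_f^{-1}(f^{-1}(y))|Df(f^{-1}(y))|^{k-1}$, together with $Df \in L^n$ and $K_f \in L^q$, produces integrability at the conformal exponent $n/(n-k+1)$ precisely under this threshold. With this in place, Stokes on $V'$ for smooth $\tau_j$ and compactly supported $\push{f}\eta$ yields $\int_{V'} d\tau_j \wedge \push{f}\eta = (-1)^k\int_{V'}\tau_j \wedge \push{f}d\eta$, and Hölder between the dual exponents $n/(k-1)$ and $n/(n-k+1)$ lets this converge to $\int_{V'}\tau \wedge \push{f}d\eta$ as $j \to \infty$. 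The target identity then reduces to $\int_{V'}\tau \wedge \push{f}d\eta = (-1)^k\int_{V'}\push{f}\omega \wedge \push{f}\eta$, which is the weak derivative relation $\push{f}\omega = d\tau$ applied with the non-smooth test form $\push{f}\eta$. I expect this extension from $C^\infty_c$ test forms to $\push{f}\eta$ to be the main obstacle: approximating $\push{f}\eta$ by smooth compactly supported $\alpha_j$ with $d\alpha_j \to \push{f}d\eta$ in $L^{n/(n-k+1)}$ links both sides of the weak derivative relation through $\int \tau \wedge d\alpha_j = (-1)^k \int \push{f}\omega \wedge \alpha_j$, but identifying $\lim_j \int \push{f}\omega \wedge \alpha_j$ with $\int \push{f}\omega \wedge \push{f}\eta$ requires more than naive Hölder duality when $q < 1$---the a.e.\ convergence $\alpha_j \to \push{f}\eta$ together with the $L^1$-integrability $\push{f}\omega \wedge \push{f}\eta \in L^1(V')$ (finite by change of variables from $\int_\Omega \omega \wedge \eta$) furnishes a uniform-integrability argument tied to the tightness of the hypothesis.
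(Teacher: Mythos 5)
Your route diverges from the paper's at the outset: the paper performs a single Sobolev change of variables to write $\int_\Omega \omega\wedge\eta = \int_{\Omega'}d\tau\wedge\push{f}\eta$ and then mollifies $\push{f}\eta$ on the target side; you instead mollify $\tau$ first and invoke Lemma~\ref{lem:pullback_d_commutation} on each $\tau_j$. That first phase, however, is a detour. The chain $\int_\Omega f^*\tau\wedge d\eta = \lim_j\int_\Omega f^*\tau_j\wedge d\eta = \cdots = \int_{\Omega'}\tau\wedge\push{f}d\eta$ which your mollification of $\tau$ produces can be obtained in one stroke by writing $f^*\tau\wedge d\eta = f^*\bigl(\tau\wedge\push{f}d\eta\bigr)$ almost everywhere and changing variables, using that $\tau\in L^{n/(k-1)}_\loc$ pairs dually with $\push{f}d\eta\in L^{n/(n-k+1)}_c$ (the latter integrability is exactly where $q\geq(k-1)/(n-k+1)$ is consumed, via Lemma~\ref{lem:pushforward_estimate} applied to the $(n-k+1)$-form $d\eta$). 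What remains after your phase one is the identity $\int_{\Omega'}\tau\wedge\push{f}d\eta = (-1)^{k+1}\int_{\Omega'}\push{f}\omega\wedge\push{f}\eta$, which is precisely the weak relation $\push{f}\omega = d\tau$ tested against the non-smooth form $\push{f}\eta$; this is the one and only thing the paper's proof actually establishes, by mollifying $\push{f}\eta$ to $\alpha_i$.

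It is at this crux that your argument has a genuine gap, and to your credit you flag it yourself. You observe that identifying $\lim_j\int_{\Omega'}\push{f}\omega\wedge\alpha_j$ with $\int_{\Omega'}\push{f}\omega\wedge\push{f}\eta$ may require more than naive Hölder duality, and you propose that the a.e.\ convergence $\alpha_j\to\push{f}\eta$ together with $\push{f}\omega\wedge\push{f}\eta\in L^1$ furnishes a uniform-integrability argument. That does not follow: a.e.\ convergence of a sequence plus $L^1$-membership of the pointwise limit implies nothing about uniform integrability of the sequence $\{\push{f}\omega\wedge\alpha_j\}$, and no dominating function is produced, so no Vitali-type conclusion is available. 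The paper instead closes this step by placing $\push{f}\eta$ in $W^{d,n/(n-k),n/(n-k+1)}_c(\wedge^{n-k}\Omega')$ via Lemmas~\ref{lem:pushforward_d_commutation} and~\ref{lem:pushforward_estimate}, so that $\alpha_j\to\push{f}\eta$ in $L^{n/(n-k)}$ and $d\alpha_j\to\push{f}d\eta$ in $L^{n/(n-k+1)}$, and then applies Hölder against $d\tau$ and $\tau$ respectively. Your instinct that the final limit is delicate when $q$ is small is well founded, but the answer has to be a concrete integrability estimate in the style of Lemma~\ref{lem:pushforward_estimate}, not an appeal to uniform integrability that is asserted rather than derived. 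As written, the proof does not close.
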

\begin{proof}
	By Lemma \ref{lem:pullback_estimate}, our assumption that $n \geq (k-1)(1+q^{-1})$ implies that $f^*\tau \in L^1_\loc(\wedge^{k-1} \Omega)$. Hence, the remaining check is again that $d f^* \tau = \omega$ weakly. Let $\eta \in C^\infty_c(\wedge^{n - k} \Omega)$. A Sobolev change of variables gives us
	\[
		\int_\Omega \omega \wedge \eta
		= \int_{\Omega} f^* (\push{f}\omega \wedge \push{f}\eta)
		= \int_{\Omega'} d\tau \wedge \push{f}\eta.
	\]
	By Lemmas \ref{lem:pushforward_d_commutation} and \ref{lem:pushforward_estimate}, we have that $\push{f} \eta \in W^{d, n/(n-k), n/(n-k+1)}_c(\wedge^{n-k} \Omega')$ and $\push{f} d\eta = d \push{f} \eta$. Hence, we may again take a sequence of mollifications $\alpha_j$ of $\push{f} d\eta$, and we get that $\alpha_j \in C^\infty_c(\wedge^{n-k} \Omega')$ for large enough $j$, $\alpha_j \to \push{f} \eta$ in the $L^{n/(n-k)}$-norm, and $d\alpha_j \to \push{f} d\eta$ in the $L^{n/(n-k+1)}$-norm. Yet again a standard application of H\"older's inequality yields
	\begin{multline*}
		\int_{\Omega'} d\tau \wedge \push{f}\eta
		= \lim_{i \to \infty} \int_{\Omega'} d\tau \wedge \alpha_i
		= (-1)^{k+1}\lim_{i \to \infty} \int_{\Omega'} \tau \wedge d\alpha_i\\
		= (-1)^{k+1}\int_{\Omega'} \tau \wedge \push{f} d\eta.
	\end{multline*}
	Finally, we perform one more change of variables:
	\[
		\int_{\Omega'} \tau \wedge \push{f} d\eta
		= \int_{\Omega} f^* (\tau \wedge \push{f} d\eta)
		= \int_{\Omega} f^* \tau \wedge d\eta.
	\]
\end{proof}

\section{Sobolev de Rham cohomologies}

Let $M$ be an oriented Riemannian manifold without boundary. Note that for the purposes of this text, we only need the following results when $M$ is an open domain in $\R^n$, but we state them more generally regardless. We use similar notation $W^{d, p, q}(\wedge^k M)$, $W^{d,p}(\wedge^k M)$, $W^{d,p,p}_\loc(\wedge^k M)$, $W^{d,p}_\loc(\wedge^k M)$, $W^{d,p,p}_c(\wedge^k M)$ and $W^{d,p}_c(\wedge^k M)$ for manifolds as we specified in the Euclidean setting.

We begin by recalling a \emph{Poincar\'e lemma} in the Sobolev setting. It is closely tied with the \emph{Sobolev-Poincar\'e inequalities} for differential forms. We give here the precise version we require. Note that the following result is very close to the one stated in \cite[Lemma 4.2]{Kangasniemi-Pankka_PLMS}, but our statement also includes the exceptional $L^1$-case since we need it here.

\begin{lemma}\label{lem:poincare_lemma}
	Let $M$ be an oriented Riemannian $n$-manifold without boundary with $n \geq 2$, let $k \in \{1, \dots, n\}$, let $x \in M$, and let $\omega \in L^q_\loc(\wedge^k M)$ for some $q \in [1, \infty)$. Suppose that $d\omega = 0$ weakly. Then there exists a neighborhood $U$ of $x$ and a $(k-1)$-form $\tau \in W^{d,q}_\loc(\wedge^{k-1} U)$ such that $\omega \vert_U = d\tau$. Moreover, if $q > 1$, then we also have $\tau \in L^p_\loc(\wedge^{k-1} U)$ for every $p \in [1, \infty)$ satisfying $p^{-1} + n^{-1} \geq q^{-1}$.
\end{lemma}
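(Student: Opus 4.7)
The statement is local, so the plan is to reduce to the case of a Euclidean ball and then construct $\tau$ by an explicit homotopy (cone) operator. I would first pick a smooth chart $(V, \varphi)$ around $x$ whose image contains a ball $B \subset \R^n$ centered at $\varphi(x)$, set $U := \varphi^{-1}(B)$, and pull the problem back to $B$; this is harmless since smooth diffeomorphisms preserve $L^q_\loc$-integrability, commute with the weak exterior derivative, and preserve weak closedness. It thus suffices to produce a primitive on $B$ and pull it back through $\varphi$ at the end.

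On $B$, I would use the classical Iwaniec--Lutoborski cone homotopy
\[
    (T\eta)(y) \;=\; \int_0^1 t^{k-1}\, \bigl(i_{y - y_0}\eta\bigr)\bigl(y_0 + t(y-y_0)\bigr)\, dt,
\]
where $y_0$ is the center of $B$ and $i_v$ denotes interior multiplication by $v$. The Cartan homotopy identity $dT\eta + Td\eta = \eta$ on smooth forms extends to $L^q_\loc$-forms via mollification: one approximates $\omega$ by its mollifications $\omega_\varepsilon$, verifies the identity for each $\omega_\varepsilon$, and then passes to the limit after testing against a fixed $\eta \in C^\infty_c(\wedge^{n-k}B)$. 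Applied to the closed form $\omega$ this yields $\omega = d(T\omega)$ in the weak sense, so I would set $\tau := T\omega$.

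For the integrability of $\tau$, the substitution $s = y_0 + t(y - y_0)$ rewrites $T\omega$ as an integral operator against a kernel pointwise bounded by $C|y-s|^{1-n}$. Minkowski's integral inequality together with the local integrability of $|y-s|^{1-n}$ on bounded sets gives $T \colon L^q_\loc(\wedge^k B) \to L^q_\loc(\wedge^{k-1} B)$ for every $q \in [1, \infty)$; combined with $d\tau = \omega \in L^q_\loc$, this yields $\tau \in W^{d,q}_\loc(\wedge^{k-1} U)$. For the sharper claim when $q > 1$, I would invoke the Hardy--Littlewood--Sobolev mapping property of the Riesz potential $I_1$, namely $I_1 \colon L^q_\loc \to L^p_\loc$ whenever $p^{-1} + n^{-1} \geq q^{-1}$, to obtain the improved Sobolev integrability of $\tau$ on the full claimed range of $p$.

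The main delicate point is the $L^1$-endpoint, which is precisely the new content beyond \cite[Lemma 4.2]{Kangasniemi-Pankka_PLMS}: at $q = 1$ the Riesz potential $I_1$ fails to map $L^1$ into $L^{n/(n-1)}$, so the sharp Sobolev gain is unavailable and one cannot claim any improved integrability for $\tau$. One therefore reverts to the direct Minkowski argument, which still yields the $L^1_\loc \to L^1_\loc$ bound for $T$ and hence the desired $\tau \in W^{d,1}_\loc(\wedge^{k-1} U)$. The remaining cases $q \in (1, \infty)$ are then essentially a restatement of the previously cited result.
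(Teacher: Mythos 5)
Your proposal follows essentially the same route as the paper: localize to a ball via a chart, invoke the Iwaniec--Lutoborski homotopy operator $T$, get $\omega = dT\omega$ from the chain homotopy identity, and then establish integrability of $T\omega$. The one substantive variation is at the final step: the paper cites \cite[Proposition 4.1]{Iwaniec-Lutoborski} for the $W^{1,q}$-bound $\|T\omega\|_{W^{1,q}} \lesssim \|\omega\|_{L^q}$ (valid for $q>1$) and then applies the Sobolev embedding theorem, whereas you obtain the $L^p$-gain directly from the Hardy--Littlewood--Sobolev inequality applied to a Riesz-potential majorant of $T$. These are essentially equivalent for this purpose; the $W^{1,q}$ route has the minor advantage of handling the range $q \geq n$ without a separate remark (HLS in the form you state degenerates there, and one has to fall back on boundedness of the domain), but both work.

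One thing to be careful about: the displayed formula
\[
    (T\eta)(y) = \int_0^1 t^{k-1}\, \bigl(i_{y-y_0}\eta\bigr)\bigl(y_0 + t(y-y_0)\bigr)\, dt
\]
is the \emph{fixed-center} cone operator, and for that operator the claim that the substitution $s = y_0 + t(y-y_0)$ produces a kernel pointwise bounded by $C|y-s|^{1-n}$ is not correct: the kernel is a singular measure concentrated on rays through $y_0$, not an $L^1_{\mathrm{loc}}$ function, and the fixed-center operator is not bounded $L^1 \to L^1$. What Iwaniec--Lutoborski actually use (and what your subsequent estimates require) is the \emph{averaged} operator, obtained by integrating the center $y_0$ against a smooth bump; after that averaging the kernel is genuinely dominated by $C|y-s|^{1-n}$ and all of your Minkowski/HLS estimates, including the $q=1$ endpoint, go through as claimed. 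So the argument is sound once you replace the displayed formula with the averaged version; as written, the $L^1 \to L^1$ claim and the pointwise kernel bound do not follow from the stated definition of $T$.
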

\begin{proof}
	By restricting to a small enough neighborhood of $x$ and using a smooth bilipschitz chart, we may assume that $M$ is a convex Euclidean domain. Let $U$ be a small ball around $x$. 
	
	We then refer to \cite{Iwaniec-Lutoborski}, where an integral operator $T$ is constructed that is bounded $L^q(\wedge^k U) \to L^q(\wedge^{k-1} U)$ when $1 \leq q < \infty$, and that satisfies the chain homotopy condition $\alpha = T(d\alpha) + dT(\alpha)$ for all $\alpha \in W^{d,q}_\loc(\wedge^k U)$; see \cite[(4.15--4.16)]{Iwaniec-Lutoborski}. Due to $d\omega = 0$ and the chain homotopy condition, we have $dT(\omega) = \omega$, and due to the boundedness of $T$, we have $T(\omega) \in W^{d,q}(\wedge^{k-1} U)$.
	
	Moreover, when $q > 1$, it is shown in \cite[Proposition 4.1]{Iwaniec-Lutoborski} that the $W^{1,q}$-norm of $T(\omega)$ is controlled by the $L^q$-norm of $\omega$. Hence, the Sobolev embedding theorem also implies that $T(\omega) \in L^p(\wedge^{k-1} U)$ whenever $1 \leq p < \infty$ and $p^{-1} + n^{-1} \geq q^{-1}$, completing the proof.
\end{proof}

The \emph{$L^p_\loc$-cohomology} $H_{p}^*(M)$ of $M$ is the cohomology of the chain complex
\[
	0 \rightarrow W^{d,p}_\loc(\wedge^0 M) \xrightarrow{d} W^{d,p}_\loc(\wedge^1 M) \xrightarrow{d} \dots.
\]
That is, for every $k \in \{0, 1, \dots\}$, $H_{p}^k(M)$ is the quotient vector space
\[
	H_p^k(M) = \dfrac{\ker(d \colon W^{d,p}_\loc(\wedge^k M) \to W^{d,p}_\loc(\wedge^{k+1} M))}{\im (d \colon W^{d,p}_\loc(\wedge^{k-1} M) \to W^{d,p}_\loc(\wedge^{k} M))}.
\]
Similarly, we define the \emph{compactly supported $L^p$-cohomology} $H_{p, c}^*(M)$ of $M$ as the cohomology of the chain complex
\[
	0 \rightarrow W^{d,p}_c(\wedge^0 M) \xrightarrow{d} W^{d,p}_c(\wedge^1 M) \xrightarrow{d} \dots.
\]

We then require the following standard result on equivalence of cohomologies.

\begin{thm}\label{thm:Lp_cohomology_equiv}
	For every $p \in [1, \infty)$ and $k \in \N$, we have
	\begin{align*}
		H^k_p(M) \cong H^{k}_{\derham}(M) \qquad \text{and} \qquad
		H^k_{p, c}(M) \cong H^{k}_{\derham,c}(M),
	\end{align*}
	where $H^{k}_{\derham}(M)$ is the de Rham cohomology of $M$, and $H^{k}_{\derham,c}(M)$ is the de Rham cohomology of $M$ with compact supports. Moreover, the above isomorphisms are induced by the inclusion maps $C^\infty(\wedge^* M) \hookrightarrow W^{d,p}_\loc(\wedge^* M)$ and $C^\infty_c(\wedge^* M) \hookrightarrow W^{d,p}_c(\wedge^* M)$.
\end{thm}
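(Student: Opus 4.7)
The plan is to establish both isomorphisms via a \v{C}ech--de Rham double complex argument, with Lemma \ref{lem:poincare_lemma} providing the analytic input that replaces the smooth Poincar\'e lemma. First, I would fix a countable, locally finite \emph{good} cover $\mathcal{V} = \{V_i\}_{i \in \N}$ of $M$ by precompact geodesically convex open sets, so that each nonempty finite intersection $V_{i_0 \dots i_l}$ is smoothly bilipschitz to a convex Euclidean domain, together with a subordinate smooth partition of unity $\{\eta_i\}$. For $k, l \geq 0$, consider the first-quadrant double complex
\[
    K^{k,l} = \prod_{i_0 < \dots < i_l} W^{d,p}_\loc\bigl(\wedge^k V_{i_0 \dots i_l}\bigr),
\]
with horizontal weak exterior differential $d$ and vertical \v{C}ech coboundary $\delta$. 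Exactness of the rows $(K^{k, \bullet}, \delta)$ in positive \v{C}ech degree follows from the classical partition-of-unity chain homotopy $(H\beta)_{i_0 \dots i_{l-1}} = \sum_j \eta_j \beta_{j\, i_0 \dots i_{l-1}}$, which preserves $W^{d,p}_\loc$-regularity because $d(\eta_j \alpha) = d\eta_j \wedge \alpha + \eta_j \, d\alpha$ lies in $L^p_\loc$. Exactness of the columns $(K^{\bullet, l}, d)$ in positive form degree is precisely the content of Lemma \ref{lem:poincare_lemma}, transported from Euclidean balls via the bilipschitz charts.

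The standard double-complex spectral sequence (which degenerates at $E_2$ thanks to the two exactness statements) then identifies the total cohomology of $K^{\bullet, \bullet}$ both with $H^*_p(M)$, via the augmentation $W^{d,p}_\loc(\wedge^* M) \hookrightarrow K^{\bullet, 0}$, and with the \v{C}ech cohomology $\check H^*(\mathcal{V}; \underline{\R})$, via the augmentation of the $K^{0, \bullet}$ row. Running the identical argument with $W^{d,p}_\loc$ replaced by $C^\infty$ is the classical \v{C}ech--de Rham theorem, yielding $H^*_\derham(M) \cong \check H^*(\mathcal{V}; \underline{\R})$. Since the inclusion $C^\infty \hookrightarrow W^{d,p}_\loc$ is a morphism of double complexes inducing the identity on the $K^{0, \bullet}$ edge, the composite isomorphism $H^*_\derham(M) \cong H^*_p(M)$ is realized by this inclusion, as claimed. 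Passing to a cofinal system of good covers identifies $\check H^*(\mathcal{V}; \underline{\R})$ with $\check H^*(M; \underline{\R})$, which in turn agrees with singular cohomology for smooth manifolds.

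For the compactly supported case, I would run the analogous argument with the direct-sum double complex $K^{k,l}_c = \bigoplus_{i_0 < \dots < i_l} W^{d,p}_c(\wedge^k V_{i_0 \dots i_l})$. The same partition-of-unity chain homotopy gives row exactness, and compactness of supports is preserved because the cover is locally finite. Column exactness reduces to a \emph{compactly supported} Poincar\'e lemma on a Euclidean ball $B$: given a weakly closed $\omega \in W^{d,p}_c(\wedge^k B)$ with $1 \leq k < n$, one applies Lemma \ref{lem:poincare_lemma} to obtain a primitive $\tau$ on $B$, which is then closed on the annular region $B \setminus \overline{B'}$ for any ball $B'$ containing $\spt \omega$ in its interior. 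Since $B \setminus \overline{B'}$ is homotopy equivalent to $S^{n-1}$, its $(k-1)$-cohomology vanishes for $2 \leq k < n$, so $\tau$ admits a further primitive $\sigma$ there; one then subtracts $d(\chi \sigma)$ for an appropriate cutoff $\chi$ supported near $\partial B$ to obtain a compactly supported primitive of $\omega$. The case $k = 1$ is handled by subtracting a locally constant function on $B \setminus \overline{B'}$.

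The main obstacle I anticipate is carrying out this compactly supported Poincar\'e lemma at the level of $W^{d,p}$-regularity, since the primitive $\sigma$ on the annulus must be produced within the Sobolev class and the cutoff correction must not destroy the integrability of $d\tau$. This essentially requires a version of Lemma \ref{lem:poincare_lemma} on non-contractible but topologically simple Euclidean domains, together with careful use of the Leibniz rule for products with smooth cutoffs. Once this is in hand, the spectral-sequence and naturality arguments transfer verbatim from the locally integrable case, and the compactly supported statement follows.
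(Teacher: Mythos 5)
Your approach for the ordinary (non-compactly-supported) isomorphism is a genuine alternative to the paper's. The paper invokes the abstract machinery of $\Phi$-soft sheaf resolutions (Theorem~\ref{thm:sheaf_cohomology}, from Bredon), where the analytic inputs are exactly the same two facts you isolate: closure of $W^{d,p}_\loc$ under multiplication by smooth cutoffs (giving softness, or in your language row exactness via the partition-of-unity homotopy) and Lemma~\ref{lem:poincare_lemma} (giving local exactness). Your \v{C}ech--de~Rham double complex over a good cover is the hands-on realization of the same argument, and for $H^k_p(M)$ it works. One small caveat: Lemma~\ref{lem:poincare_lemma} as stated produces a primitive only on a possibly smaller neighborhood $U\subset V_{i_0\cdots i_l}$, so for genuine column exactness on the full intersection you need the global version on convex domains; this does follow from the Iwaniec--Lutoborski operator used in the proof, but should be said.

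The compactly supported case is where the proposal has a real gap, and it is worth spelling out why ``the analogous argument'' does not run as written. First, the vertical differential cannot be the \v{C}ech coboundary $\delta$: restriction $W^{d,p}_c(\wedge^k V_{i_0\cdots i_l}) \to W^{d,p}_c(\wedge^k V_{i_0\cdots i_{l+1}})$ does not preserve compact support, so $K^{k,l}_c$ with $\delta$ is not a double complex at all. The correct maps are extension-by-zero, going in the opposite direction (decreasing \v{C}ech index), so the compactly supported complex is a different object with a generalized Mayer--Vietoris sequence in place of the \v{C}ech sequence. Second, and more fundamentally, the columns of this complex are \emph{not} exact in positive degree: $H^n_c$ of each contractible $V_{i_0\cdots i_l}$ is $\R$, not zero. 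Your compactly supported Poincar\'e lemma for $1\le k < n$ (and $k=1$ via constants) is correct and useful, but it does not imply column exactness, and the phrase ``degenerates at $E_2$ thanks to the two exactness statements'' therefore does not transfer. What does work is a comparison of spectral sequences: both the $C^\infty_c$ and $W^{d,p}_c$ double complexes have the same column cohomology ($\R$ concentrated in form degree $n$, detected by integration), the inclusion induces an isomorphism on that $E_1$-page, and row (Mayer--Vietoris) exactness identifies the total cohomology of each with $H^*_{\derham,c}(M)$ and $H^*_{p,c}(M)$ respectively; the comparison theorem then gives the claimed isomorphism. This requires you to also prove the $k=n$ statement --- that a compactly supported, weakly closed $n$-form in $W^{d,p}_c$ with zero integral has a compactly supported primitive --- which your sketch omits but which follows from the same annulus argument once you observe that $\int_{\partial B'}\tau=\int_B\omega=0$ kills the $H^{n-1}(\text{annulus})\cong\R$ obstruction. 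The paper's sheaf-theoretic route sidesteps all of this, since a family of supports $\Phi$ is built into Theorem~\ref{thm:sheaf_cohomology} and the softness argument is support-agnostic; that uniformity is precisely what that abstraction buys.
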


Theorem \ref{thm:Lp_cohomology_equiv} follows via a standard argument from highly general results of sheaf cohomology. The essential ingredients required for the proof to work are the Poincar\'e lemma from Lemma \ref{lem:poincare_lemma}, the fact that the spaces $W^{d,p}_\loc(\wedge^k U)$ are defined locally, the fact that a $u \in W^{d,p}_\loc(\wedge^0 U)$ with $du = 0$ is locally constant, and the fact that the spaces $W^{d,p}_\loc(\wedge^k U)$ are closed under multiplication by $C^\infty$-functions. For the sake of readers less familiar with sheaf cohomology, we recall here a version of the general result we're relying on, where we try to minimize the use of sheaf-theoretic concepts in the statement.

Let $X$ be a paracompact Hausdorff space. Note that this includes for example all metric spaces. A \emph{presheaf} (of vector spaces) $\cS$ on $X$ is a choice of a vector space $\cS(U)$ for every open set $U \subset X$, combined with restriction maps $u \mapsto u\vert_V \colon \cS(U) \to \cS(V)$ whenever $V \subset U$. The restriction maps are assumed to satisfy the following typical properties of the restriction of functions:
\begin{itemize}
	\item $u\vert_U = u$ if $u \in \cS(U)$;
	\item $(au + bv)\vert_V = a(u\vert_V) + b(v\vert_V)$ if $u, v \in \cS(U)$ and $a, b \in \R$;
	\item $(u\vert_V)\vert_W = u\vert_W$ if $u \in \cS(U)$ and $W \subset V \subset U$.
\end{itemize}
A presheaf $\cS$ is then a \emph{sheaf} if it also satisfies the following two conditions
\begin{enumerate}
	\item[(S1)] \label{enum:sheaf_locality} If $U = \bigcup_i U_i$ and $u \in \cS(U)$ is such that $u\vert_{U_i} = 0$ for every $i$, then $u = 0$.
	\item[(S2)] \label{enum:sheaf_gluing} If $U = \bigcup_i U_i$, and we have elements $u_i \in \cS(U_i)$ such that they coincide on intersections, i.e. $u_i\vert_{U_i \cap U_j} = u_j\vert_{U_i \cap U_j}$ if $U_i$ and $U_j$ intersect, then there exists an element $u \in \cS(U)$ such that $u\vert_{U_i} = u_i$ for every $i$.
\end{enumerate}
Notably, essentially every typical linear function space on a smooth manifold is a presheaf that satisfies (S1): $C^\infty$, $C^0$, $L^p$, $L^p_\loc$, etc. However, (S2) is only satisfied if the definition of the function space is in a sense local: for example $U \mapsto L^p_\loc(U)$ is a sheaf, but $U \mapsto L^p(U)$ fails to be a sheaf since the definition of $L^p(U)$ is global in nature.

A \emph{presheaf morphism} $f \colon \cS \to \cS'$ between two presheaves on $X$ is a collection of linear maps $f \colon \cS(U) \to \cS'(U)$ such that $f(u\vert_V) = (f(u))\vert_V$. A \emph{family of supports} $\Phi$ on $X$ is a collection of closed subsets of $X$ such that
\begin{itemize}
	\item if $C \in \Phi$, then every closed subset of $C$ is also in $\Phi$;
	\item if $C, C' \in \Phi$, then $C \cup C' \in \Phi$;
	\item if $C \in \Phi$, then there's a neighborhood $V$ of $C$ such that $\overline{V} \in \Phi$.
\end{itemize}
If $\cS$ is a sheaf on $X$, then the \emph{support} $\spt u$ of $u \in \cS(X)$ is $X \setminus U$, where $U$ is the largest open set on which $u\vert_U = 0$. We also use $\cS_\Phi(X)$ to denote all elements of $\cS(X)$ with $\spt(u) \in \Phi$.

We can now state the general result we use.

\begin{thm}\label{thm:sheaf_cohomology}
	Let $X$ be a paracompact Hausdorff space, let $\Phi$ be a family of supports, and suppose that we have a sequence
	\begin{equation}\label{eq:reso}
		0 \rightarrow \cS^{-1} \xrightarrow{d_0} \cS^0 \xrightarrow{d_1} \cS^1 \xrightarrow{d_2} \cS^2 \xrightarrow{d_3} \dots
	\end{equation}
	such that the following conditions are satisfied.
	\begin{itemize}
		\item Every $\cS^i$ in \eqref{eq:reso} is a sheaf on $X$. Every $d_i$ is a presheaf morphism.
		\item The sequence \eqref{eq:reso} is exact in the following local sense: if $U \subset X$ is open, $u \in \cS^i(U)$ and $d_{i+1}(u) = 0$, then for every $x \in U$ there is a neighborhood $U_x \subset U$ of $x$ such that $u\vert_{U_x} = d_i(v_x)$ for some $v_x \in \cS^{i-1}(U_x)$.
		\item For $i \geq 0$, the sheaves $\cS^i$ are $\Phi$-soft: that is, for any $C \in \Phi$, any neighborhood $U$ of $C$, and any $u \in \cS^i(U)$, there exists $u' \in \cS^i(X)$ and a neighborhood $V \subset U$ of $C$ such that $u'\vert_V = u\vert_V$.
	\end{itemize}
	Then the cohomology groups $H^i_\Phi(X; \cS^{-1})$ of the sequence of vector spaces
	\[
		0 \rightarrow \cS_\Phi^0(X) \xrightarrow{d_1} \cS_\Phi^1(X) \xrightarrow{d_2} \cS_\Phi^2(X) \xrightarrow{d_2} \dots
	\]
	are determined completely up to isomorphism by the sheaf $\cS^{-1}$ and the family of supports $\Phi$. Moreover, if we have a commutative diagram of sheaves and presheaf morphisms
	\[\begin{tikzcd}
		0 \ar[r] & \cS^{-1} \ar[r, "d_0"] \ar[d, "\id"] & \cS^{0} \ar[r, "d_1"] \ar[d, "s_0"] & \cS^{1} \ar[r, "d_2"] \ar[d, "s_1"] & \dots\\
		0 \ar[r] & \cS^{-1} \ar[r, "d_0'"] & \cT^{0} \ar[r, "d_1'"] & \cT^{1} \ar[r, "d_2'"] & \dots
	\end{tikzcd}\]
	where both rows satisfy the above conditions, then the maps $s_i$ induce an isomorphism between the cohomology groups of $\cS_\Phi^i(X)$ and $\cT_\Phi^i(X)$.
\end{thm}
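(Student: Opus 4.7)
The theorem is the classical fact that a $\Phi$-soft resolution of a sheaf computes its sheaf cohomology with supports in $\Phi$ (see e.g.\ Bredon, \emph{Sheaf Theory}, or Godement's \emph{Topologie alg\'ebrique et th\'eorie des faisceaux}, for the standard treatment). My plan is to reduce the statement, via dimension shifting, to two homological facts about $\Phi$-soft sheaves, and then to handle the naturality/comparison part by the usual acyclic-resolution argument.

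The first and central step is the following lemma: if $0 \to \cA \to \cB \to \cC \to 0$ is a short exact sequence of sheaves (i.e.\ exact on stalks, equivalently locally exact in the sense of the theorem), and $\cA$ is $\Phi$-soft, then the sequence of $\Phi$-supported global sections
\[
    0 \to \cA_\Phi(X) \to \cB_\Phi(X) \to \cC_\Phi(X) \to 0
\]
is exact. Exactness at the first two spots is immediate from the sheaf axioms. The surjectivity at the right is the one substantive point: given $c \in \cC_\Phi(X)$, one covers $\spt c$ by open sets on each of which $c$ lifts to $\cB$, uses paracompactness of $X$ together with the structure of $\Phi$ (in particular the existence of a closed neighborhood of each $C \in \Phi$ belonging to $\Phi$) to glue these local lifts into a section over an open neighborhood $U$ of $\spt c$, and finally uses $\Phi$-softness of $\cA$ to absorb the gluing error and extend to a global $b \in \cB_\Phi(X)$ with image $c$. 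At the same time one verifies the companion fact that in such a short exact sequence with $\cA$ and $\cB$ both $\Phi$-soft, the quotient $\cC$ is automatically $\Phi$-soft; this is a direct corollary of the lemma just proved.

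With this in hand, I split the resolution \eqref{eq:reso} into short exact sequences $0 \to \cZ^{i-1} \to \cS^{i-1} \to \cZ^{i} \to 0$, where $\cZ^i := \ker(d_{i+1})$ and $\cZ^{-1} = \cS^{-1}$. The softness propagation above shows inductively that every $\cZ^i$ for $i \geq 0$ is $\Phi$-soft, while the surjectivity lemma applied to these short exact sequences reassembles the complex $\cS^{\bullet}_\Phi(X)$ from its graded pieces. Taking any fixed injective (or canonical Godement flabby) resolution of $\cS^{-1}$ as the definition of $H^*_\Phi(X; \cS^{-1})$, iterated connecting homomorphisms in the resulting long exact sequences identify the cohomology of $\cS^{\bullet}_\Phi(X)$ at position $i$ with $H^i_\Phi(X; \cS^{-1})$, giving the required isomorphism. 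The naturality statement for a map of soft resolutions lifting the identity on $\cS^{-1}$ is then standard: such a lift is constructed degree by degree using the acyclicity of soft sheaves, any two such lifts are chain-homotopic by the same construction, and the induced map on cohomology is therefore well-defined and agrees with the canonical isomorphism from the previous paragraph.

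The only genuinely delicate point is the surjectivity in the key lemma, where paracompactness of $X$ and the three axioms on the family of supports $\Phi$ (closure under closed subsets and finite unions, plus the neighborhood property) must all be used in tandem to perform the gluing; the rest of the argument is formal homological algebra and uses nothing about the particular sheaves $W^{d,p}_\loc(\wedge^k \cdot)$ that will be fed into it in the subsequent sections.
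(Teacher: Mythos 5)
The paper does not prove this theorem directly; it cites Bredon's \emph{Sheaf Theory}, and your overall strategy is indeed the standard argument from that reference. However, there is a genuine error in the middle of your outline: the claim that ``the softness propagation above shows inductively that every $\mathcal{Z}^i$ for $i \geq 0$ is $\Phi$-soft.'' The induction cannot even start. The first short exact sequence is $0 \to \mathcal{Z}^0 \to \cS^0 \to \mathcal{Z}^1 \to 0$ with $\mathcal{Z}^0 = \ker d_1 = \im d_0 \cong \cS^{-1}$, and $\cS^{-1}$ is not assumed $\Phi$-soft (in the paper's application it is the constant sheaf $\cR$, which is not soft), so the companion fact ``$\cA,\cB$ soft $\Rightarrow \cC$ soft'' never applies. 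Worse, if the $\mathcal{Z}^i$ \emph{were} all $\Phi$-soft, your key lemma applied to each $0 \to \mathcal{Z}^{i-1} \to \cS^{i-1} \to \mathcal{Z}^i \to 0$ would give that $\cS^{i-1}_\Phi(X) \twoheadrightarrow \mathcal{Z}^i_\Phi(X) = \ker(d_{i+1}\vert_{\cS^i_\Phi(X)})$, forcing $H^i(\cS^\bullet_\Phi(X)) = 0$ for all $i \geq 1$ and rendering the theorem vacuous, which it is not. Concretely, in the paper's de Rham-type application $\mathcal{Z}^1$ is the sheaf of weakly closed $L^p_\loc$ one-forms, and $H^1(M;\mathcal{Z}^1) \cong H^2_{\derham}(M)$ can be nonzero, so $\mathcal{Z}^1$ is not soft.

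The fix is simply to delete that claim; the dimension shifting you describe afterward never needs the $\mathcal{Z}^i$ to be soft --- it needs only $\Phi$-acyclicity of the $\cS^i$ for $i \geq 0$. That acyclicity does follow from softness via your key lemma, but there the softness propagation is applied to the cycle sheaves of a Godement (flabby, hence soft) resolution of the already-soft sheaf $\cS^i$, so the base case of the induction is soft and the argument goes through. With acyclicity of the $\cS^i$ in hand, iterating the connecting isomorphisms $H^j_\Phi(X;\mathcal{Z}^{i+1}) \cong H^{j+1}_\Phi(X;\mathcal{Z}^i)$ for $j \geq 1$, and reading off $H^1_\Phi(X;\mathcal{Z}^{n-1})$ from the segment $\cS^{n-1}_\Phi(X) \to \mathcal{Z}^n_\Phi(X) \to H^1_\Phi(X;\mathcal{Z}^{n-1}) \to 0$ of the long exact sequence, identifies $H^n_\Phi(X;\cS^{-1})$ with $\ker d_{n+1}/\im d_n$ on $\Phi$-supported sections. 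The rest of your outline, including the key lemma itself and the naturality argument by comparison of soft resolutions, is sound.
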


The proof can be found e.g.\ in \cite[Chapters I-II]{Bredon_book}. In particular, the proof of the above result is completed in \cite[Theorem II.4.1 and Section II.4.2]{Bredon_book} with a different third assumption, where the sheaves are $\Phi$-acyclic instead of $\Phi$-soft. Afterwards, $\Phi$-soft sheaves are defined in \cite[Section II.9]{Bredon_book}, and they are shown to be $\Phi$-acyclic in \cite[Theorem II.9.11]{Bredon_book}. Note that \cite{Bredon_book} defines many terms in terms of so-called stalks and \'etale spaces of sheaves, which we have elected not to discuss here. In most cases any conversion of definitions is straightforward. However, converting the above definition of $\Phi$-soft sheaves to the one in \cite{Bredon_book} is slightly trickier, and requires the paracompactness assumption; the key step is \cite[Theorem II.9.5]{Bredon_book}.

We then prove Theorem \ref{thm:Lp_cohomology_equiv}

\begin{proof}[Proof of Theorem \ref{thm:Lp_cohomology_equiv}]
	We wish to use Theorem \ref{thm:sheaf_cohomology}. For this, we first select $\cS^{-1}$ to be the \emph{constant sheaf} $\cR$ on $M$; that is $\cR(U)$ is the space of locally constant real-valued functions on $U$. We then let $\cT^i$ be the sheaves given by $U \mapsto W^{d,p}_\loc(\wedge^i U)$, and let $\cS^i$ for $i \geq 0$ be the sheaves of smooth differential forms $U \mapsto C^\infty(\wedge^i U)$. The maps $s_i$ are the inclusion maps $C^\infty(\wedge^i U) \hookrightarrow W^{d,p}_\loc(\wedge^i M)$. The maps $d_0$ and $d_0'$ are given by the inclusion $\cR(U) \hookrightarrow C^\infty(\wedge^0 U) \hookrightarrow W^{d,p}_\loc(\wedge^0 U)$, while the other maps $d_i, d_i'$ are given by the (weak) exterior derivative.
	
	All of our chosen $\cS^i$ and $\cT^i$ are indeed sheaves; here, it's crucial to use the local spaces $W^{d,p}_\loc(\wedge^i U)$. Exactness at $\cS^0$ and $\cT^0$ follow from the fact that a real function with zero derivative is locally constant; for a Sobolev version, see e.g.\ \cite[Lemma 1.13]{Heinonen-Kilpelainen-Martio_book}. Exactness at all other points of the sequence follows from the Poincar\'e lemma, or its Sobolev version given in \ref{lem:poincare_lemma}. Finally softness follows from the fact that $C^\infty(\wedge^i U)$ and $W^{d,p}_\loc(\wedge^i U)$ are closed under multiplication by $C^\infty$-functions. Indeed, if $C \subset U$ with $C$ closed and $U$ open, one can multiply any $\omega$ in $C^\infty(\wedge^k U)$ or $W^{d,p}_\loc(\wedge^k U)$ with a suitable smooth cutoff function $\eta \in C^\infty(M)$ satisfying $\spt \eta \subset U$, and hence obtain an extension $\eta \omega$ on $M$ that equals $\omega$ on a neighborhood of $C$.
	
	Hence, Theorem \ref{thm:sheaf_cohomology} applies for any family of supports $\Phi$. We get the compactly supported version by having $\Phi$ be the family of compact subsets of $M$, and the version without supports by having $\Phi$ be the family of all closed subsets of $M$.
\end{proof}

\subsection{Conformal cohomology}
Besides the above cohomology theories, we also need to use a conformal cohomology theory. There are several variations of conformal cohomology theories in use: see e.g. \cite{Donaldson-Sullivan_Acta}, \cite{Goldshtein-Troyanov_DeRham}, and \cite{Kangasniemi-Pankka_PLMS}. Since we generally do not assume higher integrability from our maps, the best suited one for our current application is the one from \cite{Kangasniemi-Pankka_PLMS}. It is the cohomology of the chain complex $\cesobloc(\wedge^* M)$ given by
\begingroup\allowdisplaybreaks\begin{align*}
	\cesobloc(\wedge^0 M) 
	&= \bigcup_{p < \infty} W^{d,p,n}_\loc(\wedge^0 M)\\
	\cesobloc(\wedge^k M),
	&= W^{d, \frac{n}{k}, \frac{n}{k+1}}_\loc(\wedge^k M)
	&&\text{ for } 1 \leq k \leq n-2,\\
	\cesobloc(\wedge^{n-1} M) 
	&= \bigcap_{p > 1} W^{d, \frac{n}{n-1}, p}_\loc(\wedge^{n-1} M),&& \text{ and}\\
	\cesobloc(\wedge^n M)  &= \bigcap_{p > 1} L^p_\loc(\wedge^n M).
\end{align*}\endgroup
The resulting conformal cohomology spaces are denoted $\cehom{k}(M)$.

We also require conformal cohomology with compact supports. In this case the relevant chain complex is
\begin{align*}
	\cesobc(\wedge^0 M) 
	&= \bigcup_{p < \infty} W^{d,p,n}_c(\wedge^0 M),\\
	\cesobc(\wedge^k M) 
	&= W^{d, \frac{n}{k}, \frac{n}{k+1}}_c(\wedge^k M)
	&&\text{ for } 1 \leq k \leq n-2,\\
	\cesobc(\wedge^{n-1} M) 
	&= \bigcap_{p > 1} W^{d, \frac{n}{n-1}, p}_c(\wedge^{n-1} M),&& \text{ and}\\
	\cesobc(\wedge^n M)  &= \bigcap_{p > 1} L^p_c(\wedge^n M).
\end{align*}
The cohomology spaces of this complex are in turn denoted $\cehomc{k}(M)$.

There is also a version of Theorem \ref{thm:Lp_cohomology_equiv} for conformal cohomology. The proof is exactly the same as that of Theorem \ref{thm:Lp_cohomology_equiv}, where the choice of exponents in the cohomology theory is exactly such that Lemma \ref{lem:poincare_lemma} still applies. Hence, we refrain from repeating the argument. Note that for the part of the result involving $\cehom{k}(M)$, a highly detailed explanation of the proof has been given in \cite[Section 4]{Kangasniemi-Pankka_PLMS}; the compactly supported version has however not been stated previously to our knowledge.

\begin{thm}\label{thm:conf_cohomology_equiv}
	For every $k \in \N$, we have
	\begin{align*}
		\cehom{k}(M) \cong H^{k}_{\derham}(M) \qquad \text{and} \qquad
		\cehomc{k}(M) \cong H^{k}_{\derham,c}(M),
	\end{align*}
	where the isomorphisms are induced by the inclusion maps $C^\infty(\wedge^* M) \hookrightarrow \cesobloc(\wedge^* M)$ and $C^\infty_c(\wedge^* M) \hookrightarrow \cesobc(\wedge^* M)$.
\end{thm}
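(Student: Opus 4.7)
The plan is to apply the general sheaf-cohomology result Theorem \ref{thm:sheaf_cohomology} exactly as in the proof of Theorem \ref{thm:Lp_cohomology_equiv}, but with the conformal complex $\cT^i(U) = \cesobloc(\wedge^i U)$ (respectively $\cesobc(\wedge^i U)$) in place of $W^{d,p}_\loc(\wedge^i U)$. I take $\cS^{-1}$ to be the constant sheaf $\cR$, let $\cS^i = C^\infty(\wedge^i \cdot)$ be the sheaf of smooth $i$-forms, let the vertical morphisms $s_i$ be the inclusions $C^\infty(\wedge^i U) \hookrightarrow \cT^i(U)$, and let the horizontal morphisms be the exterior derivative (weak on the $\cT$-side). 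With $\Phi$ the family of all closed subsets of $M$ this will produce the first isomorphism, and with $\Phi$ the family of compact subsets, the second.

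The verification of the sheaf axioms and of $\Phi$-softness carries over essentially verbatim from the proof of Theorem \ref{thm:Lp_cohomology_equiv}: each constituent space of $\cesobloc(\wedge^i U)$ is defined by an $L^p_\loc$-type condition and is therefore a sheaf, and every such space is closed under multiplication by $C^\infty$-cutoff functions via the Leibniz rule, hence is $\Phi$-soft. The substantive content is local exactness, for which the exponents defining $\cesobloc$ have been tailored precisely to match Lemma \ref{lem:poincare_lemma}. In the generic range $1 \leq k \leq n-2$, given $\omega \in W^{d, n/k, n/(k+1)}_\loc(\wedge^k U)$ with $d\omega = 0$ on a small ball $U$, Lemma \ref{lem:poincare_lemma} applied with $q = n/k$ produces a primitive $\tau$ with $\tau \in L^{n/(k-1)}_\loc$ (the Sobolev improvement applies because $(n/(k-1))^{-1} + n^{-1} = q^{-1}$) and $d\tau = \omega \in L^{n/k}_\loc$, so $\tau \in \cesobloc(\wedge^{k-1} U)$. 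The exceptional levels $k \in \{1, n-1, n\}$ are handled by the same computation with $q = n$, $q = n/(n-1)$, and arbitrary $q > 1$ respectively, producing primitives that lie in the required union or intersection of Lebesgue classes. Exactness at $\cT^0$ is the standard statement that a weakly closed $0$-form is locally constant.

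The main point requiring care is the bookkeeping at the boundary levels $k \in \{1, n-1, n\}$, where $\cesobloc$ is a union or an intersection of spaces parameterized by an extra exponent and one has to exhibit a single primitive lying in every such space simultaneously. This works because the Iwaniec--Lutoborski operator used in Lemma \ref{lem:poincare_lemma} is independent of the exponent $q$: once $T\omega$ is formed, the various integrability estimates follow from its fixed definition together with Sobolev embedding applied with different $q$. Granting this, the second assertion of Theorem \ref{thm:sheaf_cohomology} guarantees that the resulting isomorphisms are induced by the inclusions $C^\infty(\wedge^* M) \hookrightarrow \cesobloc(\wedge^* M)$ and $C^\infty_c(\wedge^* M) \hookrightarrow \cesobc(\wedge^* M)$, completing the proof.
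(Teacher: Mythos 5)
Your approach is the same as the paper's: the paper gives no self-contained proof, but merely asserts that the argument of Theorem \ref{thm:Lp_cohomology_equiv} carries over verbatim with Lemma \ref{lem:poincare_lemma} supplying local exactness, and points to \cite[Section 4]{Kangasniemi-Pankka_PLMS} for a detailed account of the non-compactly-supported half. Your bookkeeping of the Poincar\'e lemma at the boundary levels is correct, including the observation that the Iwaniec--Lutoborski operator is a single fixed operator, so the various estimates coming from different choices of $q$ all apply to the \emph{same} primitive $T\omega$; this is exactly the right way to handle the $\bigcap_{p>1}$ in $\cesobloc(\wedge^{n-1})$ and $\cesobloc(\wedge n)$.

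There is, however, a gap in your softness claim. You assert that each $\cesobloc(\wedge^i U)$ is closed under multiplication by smooth cutoffs ``via the Leibniz rule, hence is $\Phi$-soft,'' and that this is verbatim from the $W^{d,p}$ case. That is true at the generic degrees $1 \leq k \leq n-2$ (since $L^{n/k}_\loc \subset L^{n/(k+1)}_\loc$ locally, the term $d\eta\wedge\omega$ lands in the right space), at degree $0$ (where $u$ with $du \in L^n_\loc$ is automatically in $L^p_\loc$ for all $p<\infty$ by Sobolev embedding, so $d\eta\cdot u \in L^n_\loc$), and trivially at degree $n$. But it fails at degree $n-1$: for $\omega \in \cesobloc(\wedge^{n-1}U) = \bigcap_{p>1} W^{d, n/(n-1), p}_\loc$, the Leibniz term $d\eta\wedge\omega$ is a top form controlled only by $\abs{\omega}$, hence lies in $L^{n/(n-1)}_\loc$ but not in $\bigcap_{p>1}L^p_\loc$, so $d(\eta\omega)$ need not lie in the required intersection. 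Your cutoff argument therefore does not show that $\cesobloc(\wedge^{n-1})$ is $\Phi$-soft, and this degree is used in the paper's application (it is $\cesobc(\wedge^{n-k})$ with $k=1$ in Lemma \ref{lem:homology_part}). The paper is equally silent on this point and leans on \cite{Kangasniemi-Pankka_PLMS}; to make your proof self-contained you would need a different softness argument at this single degree, or a modified but cohomologous sheaf there, rather than the direct Leibniz computation.
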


\section{Proof of Theorem \ref{thm:fiber_Hausdorff_bound}}\label{sect:Hencl-koskela}

In this short section, we briefly outline the proof of Theorem \ref{thm:fiber_Hausdorff_bound}. As stated in the introduction, the proof is an obvious generalization of the argument of Hencl and Koskela \cite{Hencl-Koskela_MFD-discr-open}, and is included more for the sake of completeness. We first recall the statement of the result.

\begin{customthm}{\ref{thm:fiber_Hausdorff_bound}}
	Let $\Omega \subset \R^n$ be a domain, $n \ge 2$. Suppose that $f \in W^{1,n}_\loc(\Omega, \R^n)$ is a non-constant mapping of finite distortion and the mapping $f$ has essentially bounded multiplicity. Then for $p \in \bigl[\frac{1}{n-1}, \infty\bigr)$ we have 
	\[
		K_f \in L^p_\loc(\Omega) \implies \cH^\frac{n}{p+1}(f^{-1}\{y\}) = 0 \text{ for all } y \in f(\Omega). 
	\]
\end{customthm}
\begin{proof}
	We may assume $K_f \in L^p(\Omega)$ by considering a countable sequence of subdomains, and we may also assume that $y = 0$. Suppose towards contradiction that $\cH^{n/(p+1)}(f^{-1}\{0\}) > 0$. By our assumption that $p \geq 1/(n-1)$, we obtain that $f$ satisfies the Lusin $(N^{-1})$-condition by \cite[Theorem 4.13]{Hencl-Koskela-book}, and hence $f^{-1} \{0\}$ has zero measure. Using \cite[Theorem 3.2]{Hencl-Koskela_MFD-discr-open} with $u = \abs{f}$ and $\tilde{\Phi}(t) = \log(e + e^t)$ then yields
	\[
		\int_{f^{-1} \B^n(0, \delta)} \frac{\abs{Df}^{\frac{np}{p+1}}}{\abs{f}^{\frac{np}{p+1}} \log(e + \abs{f}^{-1})} = \infty
	\]
	for all small enough $\delta > 0$.
	
	On the other hand, Young's inequality for products yields that
	\begin{equation}\label{eq:young_application}
		\frac{\abs{Df}^{\frac{np}{p+1}}}{\abs{f}^{\frac{np}{p+1}} \log(e + \abs{f}^{-1})}
		\leq \frac{p}{p+1} \frac{\abs{Df}^{n}}{K_f \abs{f}^{n} \log^\frac{p+1}{p}(e + \abs{f}^{-1})}
		+ \frac{1}{p+1} K_f^p.
	\end{equation}
	By our assumption, $K_f$ is $L^p$-integrable over $\Omega$. Moreover, since $f$ has essentially bounded multiplicity, the first term on the right hand side of \eqref{eq:young_application} is also integrable by a change of variables estimate; see \cite[(4.9)]{Hencl-Koskela_MFD-discr-open}. We have hence reached a contradiction, which proves the claim.
\end{proof}

\section{Proof of homological obstructions}

In this section, we prove our main obstruction results: Theorem \ref{thm:homology_obstruction}, Proposition \ref{prop:topo_preimage_restrictions} and Corollary \ref{cor:fiber_homology_obstruction}. We begin by recalling the statement of Proposition \ref{prop:topo_preimage_restrictions} and by giving the short proof.

\begin{customprop}{\ref{prop:topo_preimage_restrictions}}
	Let $f \colon \Omega \to \Omega'$ be a proper, continuous, monotone surjection between open domains in $\R^n$. Then for every $\B^n(y, r) \Subset \Omega'$, we have 
	\begin{align*}
		H_0(f^{-1} \B^n(y, r); \R) &\cong \R,\\
		H_{n-1}(f^{-1} \B^n(y, r); \R) &\cong \{0\},\\
		H_{n}(f^{-1} \B^n(y, r); \R) &\cong \{0\}.
	\end{align*}
\end{customprop}
\begin{proof}
	The case $k = 0$ follows from the fact that if $f \colon X \to Y$ is a continuous monotone surjection between compact spaces, then $f^{-1} C$ is connected for every connected $C \subset X$; see e.g.\ \cite[Corollary 6.1.19]{Engelking_topology}. The case $k = n$ is simply due to the fact that the $n$-homology of any noncompact manifold vanishes.
	
	For the remaining case $k = n-1$, suppose towards contradiction that $H_{n-1}(f^{-1} \B^n(y, r); \R) \ncong \{0\}$. It then follows from Alexander duality that $\tilde{H}^0((\R^n \cup \{\infty\}) \setminus f^{-1} \B^n(y, r); \R) \ncong \{0\}$, where $\tilde{H}^*(X; \R)$ denotes the reduced \u{C}ech cohomology of $X$ with coefficients in $\R$. Now, if $r'$ is such that $r < r' < d(y, \partial \Omega')$,  we have $\tilde{H}^0(f^{-1} (\overline{\B^n(y, r')} \setminus \B^n(y, r)); \R) \ncong \{0\}$ by the reduced Mayer--Vietoris -sequence for the sets $(\R^n \cup \{\infty\}) \setminus f^{-1} \B^n(y, r)$ and $f^{-1} \overline{\B^n(y, r')}$. Since the 0:th \u{C}ech cohomology counts quasicomponents, and since quasicomponents are unions of ordinary connected components, it follows that $f^{-1} (\overline{\B^n(y, r')} \setminus \B^n(y, r))$ is disconnected. This is a contradiction, since $f^{-1} (\overline{\B^n(y, r')} \setminus \B^n(y, r))$ is connected due to the aforementioned result \cite[Corollary 6.1.19]{Engelking_topology}.
\end{proof}

We then prove Theorem \ref{thm:homology_obstruction}. We split it into two sub-theorems: a homological result proven with compactly supported $L^p$-cohomology, and a cohomological result proven with $L^p_\loc$-cohomology. The proofs of these two results are essentially identical. We begin with the homological result.

\begin{lemma}\label{lem:homology_part}
	Let $f \in W^{1,n}(\Omega, \Omega')$ be a proper, continuous, monotone surjection between open domains. Suppose that $k \in \{1, \dots, n-2\}$, and that
	\[
		K_f^{p} \in L^1(\Omega), \quad \text{where }
		p \geq \frac{k}{n-k} \text{ and } p \geq \frac{n-(k+1)}{k+1}.
	\]
	Then
	\[
		H_k(f^{-1} \B^n(y, r); \R) = \{0\} \quad \text{for every } \B^n(y, r) \Subset \Omega'.
	\]
\end{lemma}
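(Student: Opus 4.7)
The plan is to execute the diagrammatic strategy from the introduction, but applied to compactly supported forms so that Poincar\'e duality converts $H_k(U;\R)$ into the $(n-k)$:th compactly supported de Rham cohomology. Set $U = f^{-1}(\B^n(y,r))$; since $\B^n(y,r) \Subset \Omega'$ and $f$ is proper, $\overline{U}$ is compact in $\Omega$, and restricting $f$ yields a proper, continuous, monotone surjection $f|_U \colon U \to \B^n(y,r)$ between bounded open domains so that the lemmas of the previous section apply directly. As $U$ is an orientable open $n$-manifold, Poincar\'e duality gives $H_k(U;\R) \cong H^{n-k}_{\derham,c}(U)$, so it suffices to show that this cohomology vanishes. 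The constraint $k \in \{1,\dots,n-2\}$ places the dual degree $n-k$ in $\{2,\dots,n-1\}$, exactly the range where the conformal cohomology machinery fits cleanly around $(n-k-1)$- and $(n-k)$-forms.

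Take a closed $\omega \in C^\infty_c(\wedge^{n-k}U)$. The hypothesis $p \geq k/(n-k)$ is precisely what Lemmas~\ref{lem:pushforward_estimate} and \ref{lem:pushforward_d_commutation} need in order to conclude $\push{f}\omega \in L^{n/(n-k)}(\B^n(y,r))$ with $d\push{f}\omega = \push{f}d\omega = 0$; properness of $f$ makes $\push{f}\omega$ compactly supported in $\B^n(y,r)$. Thus $\push{f}\omega$ is a cocycle in the compactly supported conformal chain complex on $\B^n(y,r)$. Since $H^{n-k}_{\derham,c}(\B^n) = 0$ for $n-k < n$, Theorem~\ref{thm:conf_cohomology_equiv} forces $\cehomc{n-k}(\B^n(y,r)) = 0$, producing a primitive $\tau \in W^{d, n/(n-k-1), n/(n-k)}_c(\wedge^{n-k-1}\B^n(y,r))$ with $d\tau = \push{f}\omega$.

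Next I pull back. The second hypothesis $p \geq (n-k-1)/(k+1)$ is precisely the integrability of $K_f$ required by Lemma~\ref{lem:pullback_estimate}, applied to the $(n-k-1)$-form $\tau \in L^{n/(n-k-1)}$, to land $f^*\tau$ in $L^1(U)$; properness preserves compact support. Lemma~\ref{lem:pullback_commutation_lemma} then delivers the weak identity $\omega = d f^*\tau$, since the condition $n \geq (n-k-1)(1 + q^{-1})$ required by that lemma holds with equality at $q = (n-k-1)/(k+1)$, which is the sharp point that dictates the threshold. As $d f^*\tau = \omega$ is smooth and compactly supported, it lies in $L^1$, and so $f^*\tau \in W^{d,1}_c(\wedge^{n-k-1}U)$.

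To conclude, Theorem~\ref{thm:Lp_cohomology_equiv} with $p = 1$ says that the inclusion $C^\infty_c \hookrightarrow W^{d,1}_c$ induces an isomorphism $H^{n-k}_{\derham,c}(U) \cong H^{n-k}_{1,c}(U)$; in the latter the class $[\omega] = [df^*\tau]$ vanishes, so it vanishes in the de Rham cohomology as well, and Poincar\'e duality then gives $H_k(U;\R) = 0$. I expect the main obstacle to be the exponent bookkeeping: the two hypotheses on $p$ come from two independent sharp estimates, namely the push-forward into the conformal image exponent $L^{n/(n-k)}$ and the pull-back of a $\tau$ living at the preceding conformal exponent $L^{n/(n-k-1)}$ into the integrability $L^1$ needed to trivialize the class in a cohomology theory equivalent to de Rham.
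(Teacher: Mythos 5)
Your proof is correct and follows essentially the same route as the paper: Poincar\'e duality to pass to $H^{n-k}_{\derham,c}(U)$, push forward a compactly supported closed form into the compactly supported conformal cohomology of the ball, extract a primitive $\tau$, pull $\tau$ back with the second integrability hypothesis giving exactly $f^*\tau \in L^1_c$, and finally invoke the $L^1$-case of the sheaf-theoretic equivalence to conclude triviality in de Rham cohomology. The only cosmetic difference is that the paper phrases the argument as a proof by contradiction (choosing a nontrivial class and deriving a contradiction), whereas you argue directly that every closed compactly supported $(n-k)$-form is exact; the exponent bookkeeping you highlight at the end matches the paper's verification step for step.
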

\begin{proof}
	Suppose to the contrary that $H_k(f^{-1} \B^n(y, r); \R) \neq \{0\}$ for a given $y$ and $r$. Since $f^{-1} \B^n(y, r)$ is an oriented manifold, we have by Poincar\'e duality that 
	\[
		H^{n-k}_{\derham,c}(f^{-1} \B^n(y, r)) \neq \{0\}
	\]
	We may hence select a $\omega \in C^{\infty}_c(\wedge^{n-k} f^{-1} \B^n(y, r))$ such that the class $[\omega]$ of $\omega$ in $H^{n-k}_{\derham,c}(f^{-1} \B^n(y, r))$ is non-zero. In particular, the $L^1$-case of Theorem \ref{thm:Lp_cohomology_equiv} implies that $\omega$ is not a weak differential of any $\tau' \in W^{d,1}_c(\wedge^{n-k-1} f^{-1} \B^n(y, r))$.
	
	We then consider the push-forward $\push{f} \omega$. By Lemma \ref{lem:pushforward_d_commutation} combined with our assumption that $p \geq k/(n-k)$, we have $\push{f} \omega \in \cesobc(\wedge^{n-k} B^n(y, r))$ and $d\push{f} \omega = \push{f} d\omega = 0$. It follows that $\push{f} \omega$ is in a cohomology class of $\cehomc{n-k}(B^n(y, r))$. By Theorem \ref{thm:conf_cohomology_equiv}, we know that $\cehomc{n-k}(B^n(y, r)) = \{0\}$, and therefore $\push{f} \omega = d\tau$ for some $\tau \in \cesobc(\wedge^{n-k-1} B^n(y, r))$.
	
	Now, $\cesobc(\wedge^{n-k-1} B^n(y, r)) \subset L^{n/(n-k-1)}_c(\wedge^{n-k-1} B^n(y, r))$, where we use our assumption that $k \leq n-2$. Hence, Lemma \ref{lem:pullback_estimate} and the assumption that $f$ is proper yield that
	\[
		f^* \tau \in L^{r}_c(\wedge^{n-k-1} f^{-1} B^n(y,r)), 
		\quad \text{where } r = \frac{n}{(n - k - 1)(1 + p^{-1})}.
	\]
	Our assumption that $p \geq (n-k-1)/(k+1)$ can be re-arranged as $1 + p^{-1} \leq n/(n - k - 1)$. Hence, $r \geq 1$, and it also follows from Lemma \ref{lem:pullback_commutation_lemma} that $d f^* \tau = f^* d \tau = f^* \push{f} \omega = \omega$. This contradicts the fact that $\omega$ is not a weak differential of any $\tau' \in W^{d,1}_c(\wedge^{n-k-1} f^{-1} \B^n(y, r))$. We hence conclude that $H_k(f^{-1} \B^n(y, r); \R) = \{0\}$, completing the proof.
\end{proof}

We then give the cohomological version of Lemma \ref{lem:homology_part}. Note that this version has different assumptions on the integrability of $K_f$.

\begin{lemma}\label{lem:cohomology_part}
	Let $f \in W^{1,n}(\Omega, \Omega')$ be a proper, continuous, monotone surjection between open domains. Suppose that $k \in \{2, \dots, n-1\}$, and that
	\[
		K_f^{p} \in L^1(\Omega), \quad \text{where }
		p \geq \frac{n-k}{k} \text{ and } p \geq \frac{k-1}{n-(k-1)}.
	\]
	Then
	\[
		H^k(f^{-1} \B^n(y, r); \R) = \{0\} \quad \text{for every } \B^n(y, r) \Subset \Omega'.
	\]
\end{lemma}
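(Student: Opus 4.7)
The plan is to mirror the proof of Lemma \ref{lem:homology_part}, but to work with $L^p_\loc$-cohomology directly, thereby bypassing the Poincar\'e duality step. This also explains the different integrability hypotheses: the roles of $k$ and $n-k$ are essentially swapped compared to Lemma \ref{lem:homology_part}.

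First I would set up a non-trivial representative form. Suppose for contradiction that $H^k(f^{-1}\B^n(y,r); \R) \neq \{0\}$. Combining the standard de Rham theorem with the $p = 1$ case of Theorem \ref{thm:Lp_cohomology_equiv}, one can choose a smooth closed form $\omega \in C^\infty(\wedge^k f^{-1}\B^n(y,r))$ that is \emph{not} the weak differential of any $\alpha \in W^{d,1}_\loc(\wedge^{k-1} f^{-1}\B^n(y,r))$.

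Next I would push this form to the ball and trivialize it there. The first hypothesis $p \geq (n-k)/k$ is precisely the assumption needed to apply Lemma \ref{lem:pushforward_d_commutation} locally on preimages of balls $\B^n(y, r') \Subset \Omega'$ with $r' < r$, which are relatively compact in $\Omega$ by properness of $f$. This yields $\push{f}\omega \in W^{d, n/k, n/(k+1)}_\loc(\wedge^k \B^n(y,r))$ with $d\push{f}\omega = \push{f}d\omega = 0$. Consequently $\push{f}\omega \in \cesobloc(\wedge^k \B^n(y,r))$; in the edge case $k = n-1$, the defining intersection condition on the differential is automatic from closedness. By contractibility of $\B^n(y,r)$ we have $H^k_{\derham}(\B^n(y,r)) = \{0\}$, and Theorem \ref{thm:conf_cohomology_equiv} transfers this to $\cehom{k}(\B^n(y,r)) = \{0\}$. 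Hence there exists $\tau \in \cesobloc(\wedge^{k-1}\B^n(y,r))$ with $\push{f}\omega = d\tau$, and since $k - 1 \in \{1, \dots, n-2\}$ we have in particular $\tau \in L^{n/(k-1)}_\loc(\wedge^{k-1}\B^n(y,r))$.

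Finally I would pull $\tau$ back to reach the contradiction. The second hypothesis $p \geq (k-1)/(n-k+1)$ rearranges to $n \geq (k-1)(1 + p^{-1})$, which is exactly the requirement of Lemma \ref{lem:pullback_commutation_lemma}. That lemma then gives $f^*\tau \in L^1_\loc(\wedge^{k-1} f^{-1}\B^n(y,r))$ together with the weak identity $\omega = f^*\push{f}\omega = f^*d\tau = df^*\tau$. Thus $f^*\tau$ lies in $W^{d,1}_\loc$ and witnesses $[\omega] = 0$ in $H^k_1(f^{-1}\B^n(y,r))$, contradicting the choice of $\omega$. I expect the main obstacle to be purely bookkeeping: verifying that the integrability exponents align through the push-forward and pull-back and that the localized versions of those lemmas apply without compact support. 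Both should be routine given the apparatus already developed in the preceding section.
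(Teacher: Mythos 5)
Your proposal is correct and matches the paper's own argument, which is given only as a sketch: the paper literally states that Lemma \ref{lem:cohomology_part} is proven by repeating the argument of Lemma \ref{lem:homology_part} with $k$-forms in place of $(n-k)$-forms, compactly supported cohomology replaced by $L^p_\loc$-cohomology, and integrability applied locally via continuity of $f$. Your write-up spells out the same steps in full (including the correct exponent bookkeeping, the handling of the $k=n-1$ edge case of $\cesobloc$, and the observation that the hypotheses arise from those of Lemma \ref{lem:homology_part} by the substitution $k \mapsto n-k$), so no genuine difference in approach exists.
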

\begin{proof}
	The proof is essentially the same as that of Lemma \ref{lem:homology_part}. Indeed, instead of starting with a form $\omega \in C^\infty_c(\wedge^{n-k} f^{-1} B^n(y,r))$, we use the de Rham theorem to conclude that $H^k_{\derham}(f^{-1} \B^n(y, r)) \neq \{0\}$, and start with a form $\omega \in C^\infty(\wedge^{k} f^{-1} B^n(y,r))$ with $[\omega] \neq [0]$. The change from an $(n-k)$-form to a $k$-form causes the changes in our assumptions on $k$ and $p$. The result then follows by repeating the rest of the argument of Lemma \ref{lem:homology_part}, where all compactly supported cohomology theories are replaced with the corresponding theory without compact supports, and integrability results are applied locally using the continuity of $f$.
\end{proof}

Now, Theorem \ref{thm:homology_obstruction} follows from Lemmas \ref{lem:homology_part} and \ref{lem:cohomology_part}. We recall the statement and give the few remaining details.

\begin{customthm}{\ref{thm:homology_obstruction}}
	Let $f \in W^{1,n}(\Omega, \Omega')$ be a proper, continuous, monotone surjection between open domains in $\R^n$. Suppose that $k \in \{1, \dots, n-2\}$, and that
	\[
		K_f \in L^p_\loc(\Omega), \quad \text{where }
		p = \begin{cases}
			\frac{n-(k+1)}{k+1},& 1 \leq k < \frac{n}{2},\\
			1,& k = \frac{n}{2},\\
			\frac{k-1}{n-(k-1)},& \frac{n}{2} < k \leq n-2.
		\end{cases}
	\]
	Then
	\[
		H_k(f^{-1} \B^n(y, r); \R) = \{0\} \quad \text{for every } \B^n(y, r) \Subset \Omega'.
	\]
\end{customthm}
\begin{proof}
	If $1 \leq k < n/2$, then $p = (n-k-1)/(k+1) > k/(n-k)$, and hence $H_k(f^{-1} \B^n(y, r); \R) = \{0\}$ by Lemma \ref{lem:homology_part}, which is the desired result. If on the other hand $n/2 < k \leq n-2$ (or if we are in the unnecessary case $k = n-1$), then we similarly have $p = \left(k-1\right)/\left(n-k+1\right) > (n-k)/k$, in which case $H^k(f^{-1} \B^n(y, r); \R) = \{0\}$ by Lemma \ref{lem:cohomology_part}. Since $H^k(f^{-1} \B^n(y, r); \R) \cong H_k(f^{-1} \B^n(y, r); \R)$ by the universal coefficient theorem, we hence have our claim also in this case. The final case is $k = n/2$: in this case, our definition also gives $p = 1 = k/(n-k) > (n-k-1)/(k+1)$, and therefore Lemma \ref{lem:homology_part} yields the claim.
\end{proof}

To end this section, we recall the version of the result for fibers given in Corollary \ref{cor:fiber_homology_obstruction}, and give the short proof.

\begin{customcor}{\ref{cor:fiber_homology_obstruction}}
	Let $f \in W^{1,n}(\Omega, \Omega')$ be a proper, continuous, monotone surjection between open domains in $\R^n$. Let $k \in \{1, \dots, n\}$. Moreover, if $k \leq n-2$, suppose also that
	\[
	K_f \in L^p_\loc(\Omega), \quad \text{where }
	p = \begin{cases}
		\frac{n-(k+1)}{k+1},& 1 \leq k < \frac{n}{2},\\
		1,& k = \frac{n}{2},\\
		\frac{k-1}{n-(k-1)},& \frac{n}{2} < k \leq n-2.
	\end{cases}
	\]
	If $y \in \Omega'$ is such that $f^{-1}\{y\}$ is a neighborhood retract, then $H_k(f^{-1} \{y\}; \R) = \{0\}$.
\end{customcor}
\begin{proof}
	Let $U \subset \R^n$ be a neighborhood of $f^{-1} \{y\}$ and let $r \colon U \to f^{-1}\{y\}$ be a retraction. The sets $U_i = f^{-1} B^n(y, i^{-1})$ for large enough $i$ form a sequence of pre-compact neighborhoods of $f^{-1} \{y\}$ with $\overline{U_{i+1}} \subset U_i$ and $\bigcap_i U_i = f^{-1} \{y\}$. It follows that $U_i \subset U$ for some $i$. Now, if $\iota^i \colon f^{-1}\{y\} \hookrightarrow U_i$ and $\kappa^i \colon U_i \hookrightarrow U$ are inclusions and $c \in H_k(f^{-1} \{y\}; \R)$, then Theorem \ref{thm:homology_obstruction} yields $c = \push{r} \push{\kappa^i} \push{\iota^i} c = \push{r} \push{\kappa^i} 0 = 0$, which yields the claim.
\end{proof}

\section{The example with circular fibers}

\enlargethispage{\baselineskip}We begin by recalling the statement of Theorem \ref{thm:Bing_meets_Sobolev_again}.

\begin{customthm}{\ref{thm:Bing_meets_Sobolev_again}}
	There exists a map $h \colon \R^3 \to \R^3$ with the following properties.
	\begin{itemize}
		\item The map $h$ is topologically monotone, proper, and surjective onto $\R^3$.
		\item The map $h$ is locally Lipschitz, and $J_h$ is positive almost everywhere. Hence, $h$ is a mapping of finite distortion.
		\item We have $K_h \in L^p_\loc(\R^3, \R^3)$ for every $p < 1/2$, but $K_h \notin L^{1/2}_\loc(\R^3, \R^3)$.
		\item The fibers $h^{-1}\{0\}$ and $h^{-1}\{-e_x\}$ are bilipschitz equivalent with $\S^1$. The fibers $h^{-1}\{-te_x\}$ for $t \in (0, 1)$ are bilipschitz equivalent with $\S^1 \vee \S^1$. The fibers $h^{-1}\{-te_x\}$ for $t \in (1, \infty)$ are bilipschitz equivalent with $[0, 1]$. For all other values $y \in \R^3 \setminus \{-te_x, t \geq 0\}$, the fiber $h^{-1}\{y\}$ is a point.
	\end{itemize}
\end{customthm}

We then begin the construction of the map $h$ as above. We use cylindrical coordinates $(r, \theta, z)$ on the domain side, where $r \geq 0$ and $\theta \in (-\pi, \pi]$. On the target side, we use standard Euclidean coordinates $(x, y, z)$. We also use $\sgn{t} = t/\abs{t}$ to denote the sign of a real number $t \in \R$, with $\sgn(0) = 0$.

We partition the domain into a family of square torii $T_c$, $c \in [0, \infty)$, defined by
\[
	T_c = \{(r, \theta, z) \in \R^3: \abs{r-1} + \abs{z} = c\}.
\]
When $c = 0$, $T_c$ is the circle defined by $r = 1$ and $z = 0$. For $c \in (0, 1)$, $T_c$ is a sharp-cornered topological torus. When $c = 1$, the hole in the center of the torus gets closed, and as $c$ increases above $1$, the surface becomes topologically $\S^2$.

When $c \in [0, 1]$, we map the slices $T_{c, \theta} = T_c \cap \{(r, \theta, z) \in \R^3 : r \in [0, \infty), z \in \R\}$ by the composition of the following three maps.
\begin{itemize}
	\item Place the square $T_{c, \theta}$ into the $xy$-plane, centered at the origin, with the map 
	\[
		(r, \theta, z) \mapsto (r-1)e_x + ze_y. 
	\]
	\item Scale down uniformly by a factor of $\abs{\theta}/\pi$, with the center of scaling at the tip $(-c, 0)$. This map is given by
	\[
		(x,y) \mapsto \frac{\abs{\theta}}{\pi}(x - (-c), y) + (-c, 0).
	\]
	\item Then fix the tip of the square at $(-c, 0)$, and move the other tip at $(c, 0)$ into the $z$-direction so that the slope of the square becomes $(\abs{\theta}/\pi - 1) \sgn(\theta)$. That is, the relevant map is
	\[
		(x,y) \mapsto \left(x, y, \frac{(\abs{\theta} - \pi) \sgn(\theta)}{\pi} (x + c)\right)
	\] 
	Note that while the map of this step has a discontinuity at $\theta = 0$, the previous step will cancel out this discontinuity. 
\end{itemize}
\enlargethispage{\baselineskip}This defines our map $h$ in the region of $\R^3$ where $\abs{r-1} + \abs{z} \leq 1$. See the following Figures \ref{fig:h_center}-\ref{fig:h_closed_torus} for an illustration of the resulting map $h$. 

\begin{figure}[hb]
	\includegraphics[width=\textwidth]{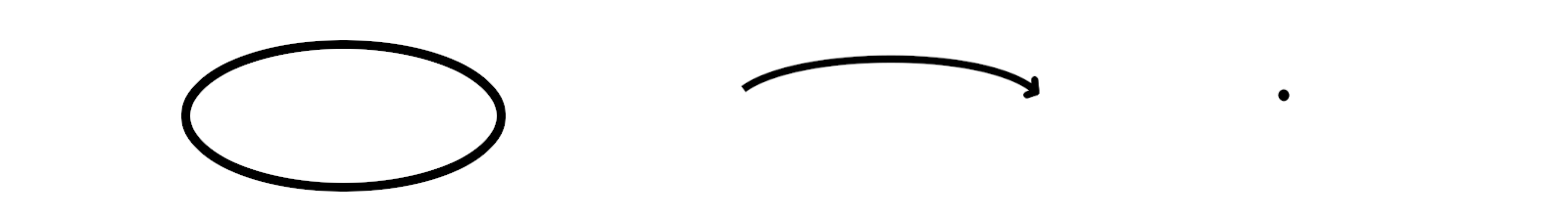}
	
	\caption{\small The set $T_0$ is just the unit circle in the $xy$-plane. The map $h$ collapses it to the origin.}\label{fig:h_center}
	
	\includegraphics[width=\textwidth]{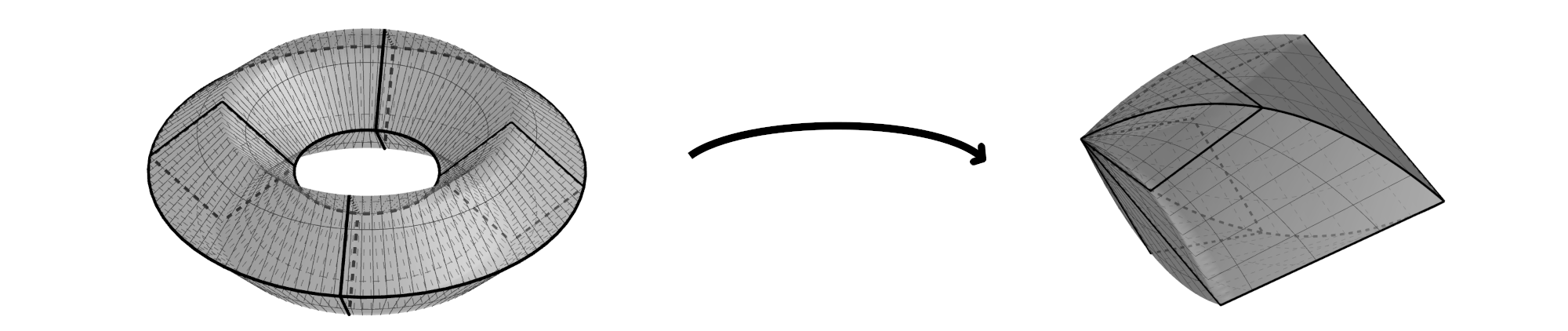}
	
	\caption{\small When $0 < c < 1$, the set $T_c$ is a square torus around the circle $T_0$. It is mapped into a surface centered at the origin, with the size of the image increasing with $c$. The inner ring of the torus and one of the square cross-sections get mapped to the single point at the tip of the surface.}
	\label{fig:h_torus}
	
	\medskip
	
	\includegraphics[width=\textwidth]{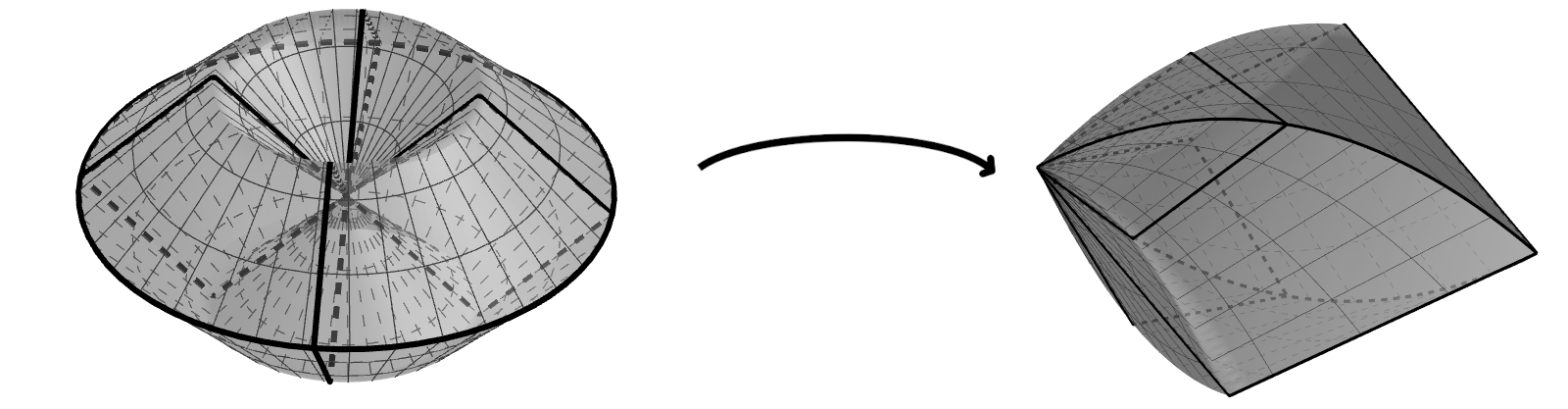}
	
	\caption{\small When $c = 1$, the torus $T_1$ gets closed in the middle. However, our previous process of defining $h$ remains valid, since the inner ring of the torus was mapped to a single point.}
	\label{fig:h_closed_torus}
\end{figure}

Moreover, by computing the composition of the three component maps of $h$ and applying $c = \abs{r-1} + \abs{z}$, we get an explicit formula for the map $h$. That is,
\begin{multline}\label{eq:h_def_1}
	h(r, \theta, z) = \left(\frac{\abs{\theta}}{\pi} (r-1 + \abs{r-1} + \abs{z}) - (\abs{r-1} + \abs{z})\right)e_x\\
	+ \frac{\abs{\theta}}{\pi} z e_y + \frac{(\pi - \abs{\theta})\theta}{\pi^2}(r-1 + \abs{r-1} + \abs{z})e_z,
\end{multline}
which applies when $\abs{r-1} + \abs{z} \leq 1$.

When $c > 1$, the slice $T_{c, \theta}$ is no longer a complete square, but instead gets cut off at the $z$-axis. Hence, we modify $T_{c, \theta}$ into a square $T_{c, \theta}'$. We do this by uniformly scaling the two cut-off sides of $T_{c, \theta}$; see Figure \ref{fig:square_conversion} for an illustration. Afterwards, we apply the same map as in the cases $c < 1$ to $T_{c, \theta}'$, where in the first step $(r, \theta, z) \mapsto (r-1)e_x + ze_y$ we use negative values of $r$ for the part of $T_{c, \theta}'$ that passes the $z$-axis, in order to preserve the shape of $T_{c, \theta}'$. The resulting map $h$ is shown in Figure \ref{fig:h_fat_torus}.

\begin{figure}[h]
	\begin{tikzpicture}[scale=0.8]
		\draw[gray] (0, 0.5) -- (1, 1.5);
		\draw (1, 1.5) -- (1.75, 0.75) node[anchor=west] {$T_{c, \theta}$} -- (2.5, 0) -- (1, -1.5);
		\draw[gray] (1, -1.5) -- (0, -0.5);
		\draw [-stealth] (0,-1.6) -- (0,1.6) node[anchor=west] 
			{$z$};
		
		\draw [-stealth] (3,0) -- (4,0);
		
		\draw[gray] (6, -1.5) -- (4.5, 0) -- (6, 1.5);
		\draw (6, 1.5) -- 
		(6.75, 0.75) node[anchor=west] {$T_{c, \theta}'$}
		-- (7.5, 0) -- (6, -1.5);
		\draw [-stealth] (5,-1.6) -- (5,1.6) node[anchor=west] 
			{$z$};
	\end{tikzpicture}

	\caption{\small How the cross-section sets $T_{c, \theta}$ are converted into squares $T_{c, \theta}'$ when $c > 1$. The gray part is scaled linearly, while the black part remains unchanged.}
	\label{fig:square_conversion}
	
	\medskip
	
	\includegraphics[width=\textwidth]{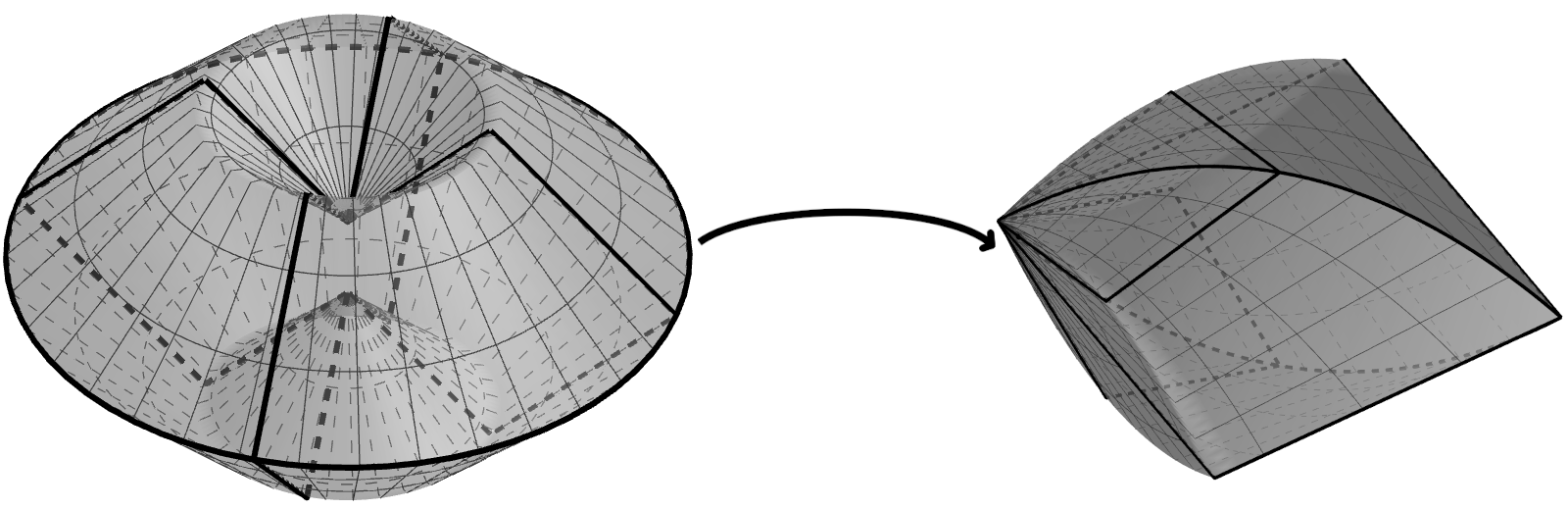}
	
	\caption{\small The resulting map on $T_c$ when $c > 1$. Now only one of the cut-off squares gets mapped to the tip on the image side.}
	\label{fig:h_fat_torus}
\end{figure}

We can again obtain an explicit formula for the resulting map $h$. When $r \leq 1$ and $r \leq \abs{z}$, the formula is
\begin{multline}\label{eq:h_def_2}
	h(r, \theta, z) = \left(\frac{\abs{\theta}}{\pi} r - 1 \right)(\abs{z} - r + 1)e_x\\
	+ \frac{\abs{\theta}}{\pi} r(\abs{z} - r + 1)\sgn(z) e_y 
	+ \frac{(\pi - \abs{\theta})\theta}{\pi^2}r(\abs{z} - r + 1)e_z .
\end{multline}
Everywhere else, including when $r \geq 1$, the map $h$ is given by  the same formula \eqref{eq:h_def_1}. 

Hence, our map $h$ is now defined on all of $\R^3$. We remark that our choice of using square torii, as well as our choice for the shape of the sets $h(T_c)$, are all motivated by the fact that the formulas \eqref{eq:h_def_1} and \eqref{eq:h_def_2} we get for $h$ are relatively simple polynomials in $r$, $\theta$ and $z$. This vastly simplifies the computations required for the proof of Theorem \ref{thm:Bing_meets_Sobolev_again}.

We note that the set of points $B_h$ where $h^{-1}\{h(x)\} \neq \{x\}$ consists of the disk $\{(r, \theta, z) : z = 0, r \leq 1\}$ combined with the half-plane $\{(r, \theta, z) : \theta = 0, r \geq 0\}$. An illustration of the non-trivial fibers of $h$ is given in Figure \ref{fig:fibers}.

\begin{figure}[h]
	\includegraphics[width={0.6\textwidth}]{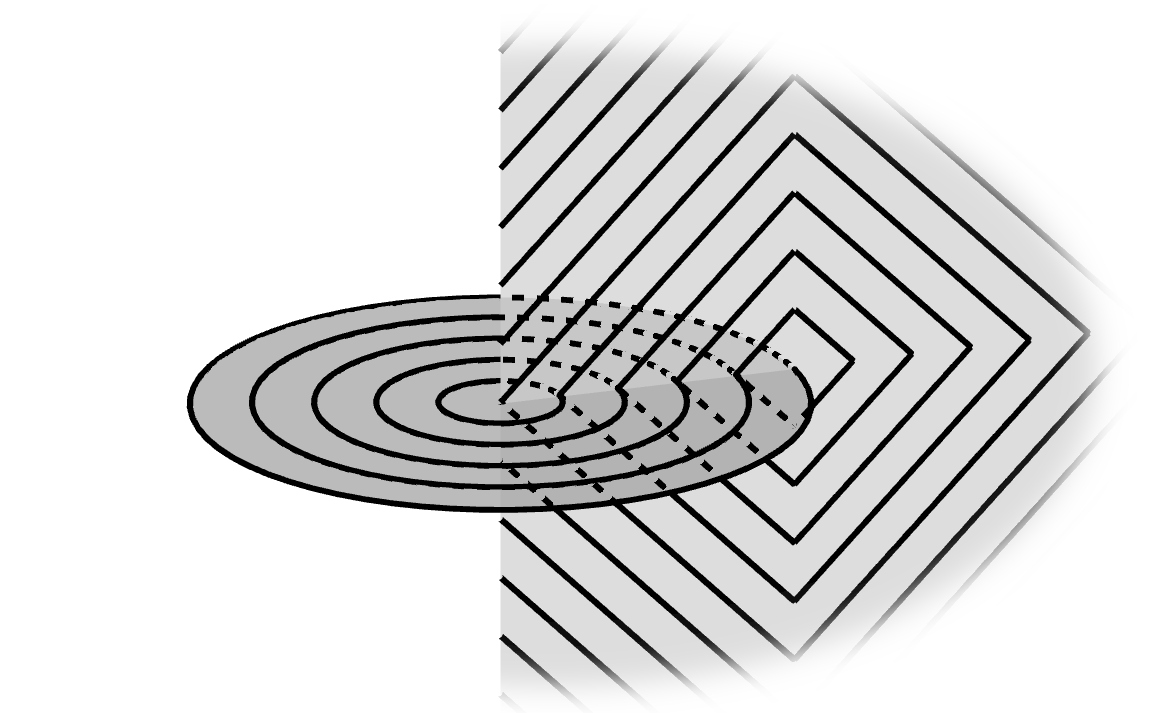}
	
	\caption{\small The set where the map $h$ is not a homeomorphism, with some of the fibers illustrated. The fibers $h^{-1}\{-ce_x\}$ with $0 < c < 1$ are figure-eights that interpolate between two linked loops. For $c > 1$, the fibers stop at the $z$-axis, and are hence topologically equivalent to a line segment.}
	\label{fig:fibers}
\end{figure}

We then verify that our map $h$ satisfies the required conditions.

\begin{proof}[Proof of Theorem \ref{thm:Bing_meets_Sobolev_again}]
	As stated above, our map $h$ is given by \eqref{eq:h_def_2} when $r \leq \min(1, \abs{z})$ and by \eqref{eq:h_def_1} elsewhere, where we assume $r \geq 0$ and $\theta \in (-\pi, \pi]$. It is clear from the geometry of the construction of $h$ that $h$ is a continuous surjection, that the fibers of $h$ are as specified, and that $h$ is hence topologically monotone.
	
	It is clear from the formulas \eqref{eq:h_def_1} and \eqref{eq:h_def_2} that $h_x$, $h_y$, and $h_z$ are absolutely continuous on every line of the type $\{(r_0, \theta_0, z) : z \in \R\}$, $\{(r_0, \theta, z_0) : \theta \in [-\pi, \pi] \}$, and $\{(r, \theta_0, z_0) : r \in [0, \infty)\}$. Hence, the partial derivatives $\partial_r (h_x, h_y, h_z)$, $\partial_\theta (h_x, h_y, h_z)$, and $\partial_z (h_x, h_y, h_z)$ exist for almost all $r$, $\theta$ and $z$. We may also easily compute the partial derivatives from \eqref{eq:h_def_1} and \eqref{eq:h_def_2}; when $r \geq \min(1, \abs{z})$, they are given by
	\begin{multline}\label{eq:h_derivative_1}
		D_{r, \theta, z}^{x,y,z} h(r, \theta, z) = \begin{bmatrix}
			\partial_r h_x(r, \theta, z) & \partial_\theta h_x(r, \theta, z) & \partial_z h_x(r, \theta, z)\\
			\partial_r h_y(r, \theta, z) & \partial_\theta h_y(r, \theta, z) & \partial_z h_y(r, \theta, z)\\
			\partial_r h_z(r, \theta, z) & \partial_\theta h_z(r, \theta, z) & \partial_z h_z(r, \theta, z)
		\end{bmatrix} \\
		= \begin{bmatrix}
			\frac{\abs{\theta}}{\pi} + \left(\frac{\abs{\theta}}{\pi} - 1\right)\sgn(r-1)&
			\frac{\sgn(\theta)}{\pi}(r-1 + \abs{r-1} + \abs{z})&
			\left( \frac{\abs{\theta}}{\pi} - 1 \right) \sgn(z)\\
			0&
			\frac{\sgn(\theta)}{\pi}z&
			\frac{\abs{\theta}}{\pi}\\
			\frac{(\pi - \abs{\theta})\theta}{\pi^2} \left( \sgn(r-1) + 1\right)&
			\frac{\pi - 2\abs{\theta}}{\pi^2}(r-1 + \abs{r-1} + \abs{z})&
			\frac{(\pi - \abs{\theta})\theta}{\pi^2} \sgn(z)
		\end{bmatrix},
	\end{multline}
	and when $r \leq \min(1, \abs{z})$, they are given by 
	\begin{multline}\label{eq:h_derivative_2}
		D_{r, \theta, z}^{x,y,z} h(r, \theta, z) \\
		= \begin{bmatrix}
			\frac{\abs{\theta}}{\pi} \left(\abs{z} - 2r+1\right) +1 &
			\frac{\sgn(\theta)}{\pi}r(\abs{z} - r + 1)&
			\left( \frac{\abs{\theta}}{\pi}r - 1 \right) \sgn(z)\\
			\frac{\abs{\theta}}{\pi} \left(\abs{z} - 2r+1\right) \sgn(z)&
			\frac{\sgn(\theta)}{\pi}r(\abs{z} - r + 1)\sgn(z)&
			\frac{\abs{\theta}}{\pi}r\\
			\frac{(\pi - \abs{\theta})\theta}{\pi^2} \left(\abs{z} - 2r+1\right) &
			\frac{\pi - 2\abs{\theta}}{\pi^2}r(\abs{z} - r + 1)&
			\frac{(\pi - \abs{\theta})\theta}{\pi^2}r \sgn(z)
		\end{bmatrix}.
	\end{multline}
	
	We then observe that $\partial_r h$, $\partial_z h$, and $r^{-1} \partial_\theta h$ are locally essentially bounded. Indeed, the only one for which this is not entirely obvious from \eqref{eq:h_derivative_1} and \eqref{eq:h_derivative_2} is $r^{-1} \partial_\theta h$. However, in the case $r \leq \min(1, \abs{z})$ we have a common factor $r$ in $\partial_\theta h$, in the case $\abs{z} \leq r \leq 1$ we have $\abs{\partial_{\theta} h} \leq \pi^{-1} \left( 2(r - 1 + \abs{r-1}) + 3\abs{z}\right) = 3\pi^{-1} \abs{z} \leq 3\pi^{-1} r$, and in the case $r \geq 1$ the coefficient $r^{-1}$ in $r^{-1} \partial_\theta h$ is bounded from above by 1. Now, since $\partial_r h$, $\partial_z h$, and $r^{-1} \partial_\theta h$ are locally $L^\infty$, and since we have absolute continuity on every line of the type $\left\{(r, \theta_0, z_0)\right\}$, $\left\{(r_0, \theta, z_0)\right\}$ and $\left\{(r_0, \theta_0, z)\right\}$, it follows from a standard path integral estimate argument that $h$ is locally Lipschitz.
	
	It now remains to compute the Jacobian $J_h$ of $h$. Note that we need an extra $r^{-1}$-term in front of the determinant of $D_{r, \theta, z}^{x,y,z} h$ to get the standard Jacobian, since $dr \wedge d\theta \wedge dz = r^{-1} dx \wedge dy \wedge dz$. We split to the three cases $\abs{z} \leq r \leq 1$, $r \leq \min(1, \abs{z})$, and $r \geq 1$. 
	
	In the case $\abs{z} \leq r \leq 1$, we easily compute using \eqref{eq:h_derivative_1} that
	\begin{align*}
		J_h(r, \theta, z) &= \frac{1}{r} \det(D_{r, \theta, z}^{x,y,z} h(r, \theta, z))\\
		&= \frac{1}{r} \det
		\begin{bmatrix}
			1&
			\frac{\sgn(\theta)}{\pi}\abs{z}&
			\left( \frac{\abs{\theta}}{\pi} - 1 \right) \sgn(z)\\
			0&
			\frac{\sgn(\theta)}{\pi}z&
			\frac{\abs{\theta}}{\pi}\\
			0 &
			\frac{\pi - 2\abs{\theta}}{\pi^2}\abs{z}&
			\frac{(\pi - \abs{\theta})\theta}{\pi^2} \sgn(z)\\
		\end{bmatrix}\\
		&= \frac{\abs{z}}{r} \frac{\abs{\theta}^2}{\pi^3}.
	\end{align*}
	In the case $r \leq \min(1, \abs{z})$, we similarly get $J_h$ by dividing the determinant of \eqref{eq:h_derivative_2} by $r$. Even though the matrix appears complicated, large parts of the first and third column are multiples of each other, leading to a great degree of simplification with the relatively tidy result
	\begin{align*}
		J_h(r, \theta, z)
		&= \frac{1}{r} \det (D_{r, \theta, z}^{x,y,z} h(r, \theta, z))\\
		&= \frac{(1+\abs{z} - r)^2 \abs{\theta}^2}{\pi^3}.
	\end{align*}
	The remaining case $r \geq 1$, computed using \eqref{eq:h_derivative_1}, yields the most complicated $J_h$. Namely, the result in this case is
	\begin{align*}
		J_h(r, \theta, z) &= \frac{1}{r} \det
		\begin{bmatrix}
			2\frac{\abs{\theta}}{\pi} - 1&
			\frac{\sgn(\theta)}{\pi}(2r - 2 + \abs{z})&
			\left( \frac{\abs{\theta}}{\pi} - 1 \right) \sgn(z)\\
			0&
			\frac{\sgn(\theta)}{\pi}z&
			\frac{\abs{\theta}}{\pi}\\
			2\frac{(\pi - \abs{\theta})\theta}{\pi^2}&
			\frac{\pi - 2\abs{\theta}}{\pi^2}(2r - 2 + \abs{z})&
			\frac{(\pi - \abs{\theta})\theta}{\pi^2} \sgn(z)
		\end{bmatrix}\\
		&= \frac{\abs{\theta}}{\pi^4 r}\left( (4\smallabs{\theta}^2 - 4\pi \smallabs{\theta} + 2\pi^2)(r-1) + (2\smallabs{\theta}^2 - 3\pi\smallabs{\theta} + 2\pi^2) \abs{z} \right).
	\end{align*}
	From the computed values of $J_h$, we see that $J_h > 0$ a.e.\ in $\R^3$; the fact that $J_h$ does not change sign was also to be expected by the monotonicity of $h$. Hence, we conclude that $h$ is a mapping of finite distortion. 
	
	Since $h$ is locally Lipschitz, we obtain that $K_h \leq C J_h^{-1}$ a.e.\ locally. In the region $\abs{z} \leq r \leq 1$ we have $J_h^{-1} = \pi^3 r \abs{z}^{-1} \abs{\theta}^{-2}$, which is locally $L^p$-integrable for $p < 1/2$. When $r \leq \min(1, \abs{z})$, we estimate by the arithmetic-geometric mean inequality that 
	\[
		J_h^{-1} = \pi^3 \abs{\theta}^{-2} ((1-r) + \abs{z})^{-2} \leq \frac{\pi^3}{4} \abs{\theta}^{-2} (1-r)^{-1} \abs{z}^{-1},
	\]
	where the upper bound is also clearly locally $L^p$-integrable for $p < 1/2$. Moreover, in the case $r \geq 1$, we can similarly estimate
	\[
		J_h^{-1} \leq \pi^4 r \abs{\theta}^{-1} \left(\pi^2 (r-1) + \frac{7\pi^2}{8} \abs{z}\right)^{-1} \leq \sqrt{\frac{2}{7}} \pi^2 r \abs{\theta}^{-1} (r-1)^{-\frac{1}{2}} \abs{z}^{-\frac{1}{2}},
	\]
	where the upper bound is in fact locally $L^p$-integrable for all $p < 1$. We conclude that $K_h \in L^p_\loc(\R^3)$ for $p < 1/2$. Moreover, in the region $\abs{z} \leq r \leq 1$ we have $\norm{Dh} \geq \abs{\partial_r h} = 1$, and $J_h^{-1}$ is not locally $L^{1/2}$-integrable in this region near the plane $\{\theta = 0\}$. Hence, $K_h \notin L^{1/2}_\loc(\R^3)$.
\end{proof}

%%%%%%%%%%%%%%%%%%%%%%%%%%%%%%%%%%%%%%% Bibliography %%%%%%%%%%%%%%%%%%%%%%%%%%%%%%%%%%%%%%%

\bibliographystyle{abbrv}
\bibliography{sources}

\end{document}